\documentclass{article}
\usepackage[
bookmarks=true,
bookmarksnumbered=true,
colorlinks=true,
linkcolor=blue,
citecolor=blue,
filecolor=blue,
urlcolor=blue
]{hyperref}
\usepackage{amsmath,amssymb,amsthm,array,enumerate,setspace,verbatim}
\usepackage{tabularx}
\usepackage{graphicx}
\usepackage{color}
\usepackage{tikz}
\usetikzlibrary{cd} %tikz用可換図式のパッケージ
\usepackage{authblk}%複数人の著者が同じ所属の場合に対応
\usepackage{makeidx}
\usepackage[top=25truemm,bottom=25truemm,left=27.5truemm,right=27.5truemm,textwidth=155truemm,textheight=247truemm]{geometry}
\makeatletter
	
	\@addtoreset{equation}{section}

	\@addtoreset{figure}{section}

	\@addtoreset{table}{section}
\makeatother

\theoremstyle{definition}
\newtheorem{thm}{Theorem}[section]
\newtheorem{defi}[thm]{Definition}
\newtheorem{example}[thm]{Example}

\newtheorem{prop}[thm]{Proposition}
\newtheorem{lemm}[thm]{Lemma}
\newtheorem{cor}[thm]{Corollary}

\newcounter{ftcount}
\renewcommand{\thefootnote}{(*\arabic{footnote})}%

\DeclareMathOperator{\Z}{\mathbb Z}
\DeclareMathOperator{\Q}{\mathbb Q}

\DeclareMathOperator{\PP}{\mathbb P}
\DeclareMathOperator{\T}{\mathbb T}

\DeclareMathOperator{\SL}{SL}

\DeclareMathOperator{\quid}{\rm quid}
\DeclareMathOperator{\cell}{\rm cell}
\DeclareMathOperator{\per}{\rm per}
\DeclareMathOperator{\Farey}{\rm Farey}
\DeclareMathOperator{\CCF}{\rm CCF}

\DeclareMathOperator{\md}{\rm md}
\DeclareMathOperator{\ad}{\rm ad}
\DeclareMathOperator{\rec}{\rm rec}
\DeclareMathOperator{\tri}{\rm tri}
\DeclareMathOperator{\Diag}{\rm Diag}

\newcommand{\maru}[1]{\text{\textcircled{\scriptsize #1}}}

\usepackage{mathrsfs}

\newenvironment{sumipmatrix}{\def\arraystretch{.8}\biggl(\begin{matrix}}{\end{matrix}\biggr)\def\arraystretch{1}}

\makeatletter
\def\section{\@startsection {section}{1}{\z@}{%
-3.5ex plus -1ex minus -.2ex}{2.3ex plus .2ex}{\large\bf}}

\def\subsection{\@startsection {subsection}{1}{\z@}{%
-3.5ex plus -1ex minus -.2ex}{2.3ex plus .2ex}{\normalsize\bf}}
\makeatother
\renewcommand{\arraystretch}{1}
\title{
Degree shifts between $q$-deformed friezes and $q$-Farey labelings 
\\
for general triangulations
}
\author{
\makebox[\textwidth][c]{
Manabu Inuma$^{1}$
\quad
Ren Motomiya$^{2}$
\quad
Takeyoshi Kogiso$^{1}$
}
}
\date{\today}

\begin{document}
\maketitle
\begingroup
\renewcommand{\thefootnote}{\arabic{footnote}}
\footnotetext[1]{Department of Mathematics, Graduate School of Science, Josai University, 
Keyakidai 1-1, Sakado, Saitama 350-0295, Japan. 
\texttt{inuma@josai.ac.jp, kogiso@josai.ac.jp}}
\footnotetext[2]{
Rakuten Mobile, Inc., Rakuten Crimson House, 1-14-1 Tamagawa, Setagaya-ku, Tokyo 158-0094, Japan. 
\\
\texttt{ren.motomiya@rakuten.com}}
\endgroup
\begin{abstract}
Morier-Genoud and Ovsienko introduced $q$-deformations of
continued fractions, Farey labelings, and Conway--Coxeter friezes,
and established relationships among them 
in restricted settings associated with triangulations having exactly two exterior cells.
In this paper, we extend these correspondences to arbitrary subsequences 
of quiddities arising from general triangulations. 
We show that the numerator and denominator polynomials 
of $q$-deformed continued fractions coincide with entries 
of $q$-deformed Conway--Coxeter friezes, while the corresponding 
polynomials in $q$-Farey labelings agree with them 
up to explicit powers of $q$. 
These powers are described combinatorially 
in terms of the number of diagonals in the triangulation, or equivalently, 
the number of entries equal to $1$ in the associated frieze. 
Furthermore, we determine the minimum and maximum degrees of these polynomials 
in terms of the same combinatorial data.
\end{abstract}
%\newpage
%\tableofcontents
%\newpage
\section{Introduction}
Frieze patterns provide a bridge between combinatorics, geometry, 
and algebra through their connections with polygon triangulations 
and continued fractions. 
Despite these developments, a unified description of the relations among 
$q$-deformed continued fractions, $q$-Farey labelings, and $q$-deformed friezes 
for general triangulations has remained incomplete.
Frieze patterns were introduced by Coxeter \cite{coxeter1971frieze}. 
Conway and Coxeter \cite{conway1973triangulated} showed 
that friezes of finite width are 
characterized by triangulations of polygons: 
a frieze with $n-1$ rows of positive integers corresponds 
to a triangulation of an $n$-gon, and its initial row coincides with the associated quiddity 
(see Theorem \ref{def:quiddity_CCF}). 
In their honor, such friezes are called Conway--Coxeter friezes (CCFs). 
\par
The main goal of this paper is to clarify the correspondence among 
$q$-deformed continued fractions, $q$-Farey labelings, 
and $q$-deformed Conway--Coxeter friezes for arbitrary polygon triangulations. 
Previous results of Morier-Genoud and Ovsienko established such relations 
only for special triangulations with two exterior cells. 
We extend these correspondences to arbitrary subsequences of quiddities 
arising from general triangulations. 
\par
These correspondences are closely related to cluster algebras and Jones polynomials. 
Fomin and Zelevinsky~\cite{fomin2002cluster, Fomin2002ClusterAI} 
introduced cluster algebras, a class of commutative algebras with deep connections 
to diverse areas such as Lie theory, Poisson geometry, Teichm\"{u}ller theory, mathematical physics, representation theory, and combinatorics. 
Assem, Reutenauer, and Smith~\cite{assem2010friezes} showed that 
cluster variables of type $A$ cluster algebras are encoded by CCFs associated with triangulations. 
Since then, numerous works have explored relationships between CCFs and cluster algebras. 
In this paper, we focus in particular on the connection with Jones polynomials. 
Fomin and Zelevinsky~\cite[Theorem 3.7, Corollary 6.3]{fomin2007cluster} 
introduced $F$-polynomials $F[a_1,a_2,\dots,a_m]$ associated with sequences 
of positive integers, and proved that cluster variables can be expressed 
in terms of these polynomials together with monomials determined by $g$-vectors%
\footnote{They showed that each cluster variable admits a separation formula: 
the numerator is a Laurent polynomial in the initial cluster variables and coefficients, 
while the denominator is a monomial in the coefficient semifield, 
expressed using the auxiliary addition $\oplus$.}.
{\c{C}}anak{\c{c}}{\i} and Schiffler \cite[Theorem 6.3, Corollary 6.6]{ccanakcci2018cluster}
, as well as Rabideau \cite[Theorem 3.4]{rabideau2018f}, showed that 
these $F$-polynomials arise as numerators of 
continued fractions of Laurent polynomials. 
Building on this, Lee and Schiffler \cite[Proposition 6.4]{lee2019cluster} proved that  
suitable specializations $F_{a_1,a_2,\dots,a_m}$ 
%of the $F$-polynomials with $y_1=t^{-2}=q^2$ and $y_2=y_3=\cdots=y_n=-t^{-1}=q$ 
coincide with the numerators of $q$-deformed continued fractions. 
Furthermore, they showed \cite[Theorem 7.1, Theorem 7.4]{lee2019cluster} 
that these specializations agree with the Jones polynomials $V_{[a_1,a_2,\dots,a_m]}$ 
of $2$-bridge knots. 
\par
The theory of $q$-deformed continued fractions has been further developed 
by Morier-Genoud and Ovsienko \cite{morier2020q, morier2021quantum}. 
In \cite{morier2020q}, they introduced $q$-deformed Farey labelings (also called weighted Farey labelings) 
and $q$-deformed rationals, and established fundamental properties. 
They also observed \cite[Appendix]{morier2020q} that 
Jones polynomials coincide with the numerators of $q$-rationals. 
In \cite{morier2021quantum}, they introduced $q$-deformed friezes 
(i.e., $q$-deformed CCFs) and studied their relations with $q$-deformed rationals. 
There are also numerous related works on Jones polynomials and $q$-deformed rationals, 
including 
\cite{bapat2023q, kogiso2025arithmetic,kogiso2019bridge, lee2019cluster, nagai2020cluster}. 
%[R22(b)] X. Ren, On $q$-deformed Farey sum and a homological interpretation of $q$-deformed real quadratic irrational numbers, Preprint (2022), arXiv:2210.06056. 
Beyond knot theory, $q$-deformed rationals appear in various contexts 
such as Teichm\"{u}ller theory \cite{fock1999quantum}, 
$2$-Calabi--Yau categories \cite{bapat2023q}, 
Markov--Hurwitz approximation theory \cite{kogiso2020q, labbe2022q, leclere2024radius,ren2022radiuses,ren2023corrigendum}, 
modular and Picard groups \cite{leclere2021q,ovsienko2021towards}, 
and combinatorics on fence posets \cite{mcconville2021rank, ouguz2023rank, ouguz2025oriented}. 
\par
Most closely related to the present work, 
Morier-Genoud and Ovsienko \cite[Proposition 4.3 (i)]{morier2020q} 
showed that certain $q$-deformed continued fractions 
coincide with rational functions arising from $q$-deformed Farey labelings. 
However, their result applies only to special subsequences 
of quiddities coming from triangulations with exactly two exterior cells. 
Similarly, in \cite[Proposition 10]{morier2021quantum}, 
they related numerators and denominators of $q$-deformed rationals 
to $q$-deformed CCFs only in this restricted setting. 
\par
We extend these results to arbitrary subsequences of quiddities 
arising from general triangulations. 
In Theorem \ref{prop:q_CCF_weighted_Farey}, 
we show that the numerator and denominator polynomials of 
$q$-deformed continued fractions coincide with entries of 
$q$-deformed Conway--Coxeter friezes, while the corresponding polynomials  
in $q$-Farey labelings agree with them up to explicit powers of $q$. 
These powers measure the discrepancy between 
$q$-deformed Conway--Coxeter friezes and $q$-Farey labelings, 
and are determined combinatorially by the number of diagonals 
in the associated triangulation, or equivalently, 
the number of occurrences of the entry $1$ in the associated frieze. 
Furthermore, in Proposition \ref{prop:q_CCF_min_deg}, 
we determine the minimum and maximum degrees of these polynomials 
using the same combinatorial quantities. 
\par
These results provide a unified framework connecting 
$q$-deformed continued fractions, $q$-Farey labelings, 
and $q$-deformed Conway--Coxeter friezes for general triangulations. 
\section{Triangulations of polygons, Farey labelings, Conway--Coxeter friezes, and continued fractions}\label{sect:Delta_Farey_CCF}
\subsection{Triangulations of polygons}
Let $n\geq 3$ be an integer. Let ${\mathcal P}$ be a convex $n$-gon, the vertices of ${\mathcal P}$ 
be numbered clockwise as $v_0,v_1,\dots,v_{n-1}$, and $V$ be the set of vertices of ${\mathcal P}$:  
${V}=\{v_0,v_{1},\ldots,v_{n-2},v_{n-1}\}$. 
For any $i\in\Z$, let $k=i\pmod n$ with $0\leq k\leq n-1$ and define $v_i=v_k$. In this way, the numbering of vertices is extended to $\Z$. An edge or a diagonal of ${\mathcal P}$ 
with endpoints $v_\alpha$ and $v_\beta$ in $V$ is denoted by 
$v_\alpha v_\beta$.  Let ${\mathcal E}$ be the set of edges of ${\mathcal P}$: 
${\mathcal E}=\{v_{0}v_{1},v_{1}v_{2},\dots,v_{n-2}v_{n-1},v_{n-1}v_{0}\}$. 
Let $\T$ be a triangulation of ${\mathcal P}$ by $n-3$ non-crossing diagonals. 
Let $\Diag_{\T}$ be the set of diagonals in $\T$. 
We call both edges of ${\mathcal P}$ and diagonals in $\T$ sides of $\T$ and 
denote their set by $E_{\T}$: $E_{\T}={\mathcal E}\cup \Diag_{\T}$. Vertices $v_{\alpha}$ and $v_{\beta}$ are said to be adjacent if $v_{\alpha}v_{\beta}\in E_{\T}$. 
For any subsets $V_1$ and $V_2$ of $V$, define  
$E_{\T}(V_1,V_2)=\{v_\alpha v_\beta\in E\mid 
v_\alpha\in {V}_1,\;v_{\beta}\in  {V}_2\}$ and $\Diag_{\T}(V_1,V_2)=\{v_\alpha v_\beta\in \Diag_{\T}\mid 
v_\alpha\in {V}_1,\;v_{\beta}\in  {V}_2\}$. If ${V}_1={V}_2$, 
we abbreviate $E_{\T}(V_1,V_2)$ to $E_{\T}(V_1)$ and $\Diag_{\T}(V_1,V_2)$ to $\Diag_{\T}(V_1)$. 
Let $\delta(V_1,V_2)$ and  $\delta(V_1)$ denote the cardinalities of $\Diag_{\T}(V_1,V_2)$ 
and $\Diag_{\T}(V_1)$, respectively; that is, 
$\delta(V_1,V_2)=\#\Diag_{\T}(V_1,V_2)$ and  $\delta(V_1)=\#\Diag_{\T}(V_1)$, 
where $\#X$ denotes the cardinality of a set $X$. 
We introduce some terminology concerning triangulations of polygons as in Conley and Ovsienko \cite[Section 6.1]{conley2023quiddities}. 
\begin{defi}[Cell] 
The triangles in $\T$ are called cells. 
As shown in Figure \ref{fig:cell}, we write a cell as ${\mathcal C}=v_{\alpha}v_{\beta}v_{\gamma}$ 
by arranging the vertices clockwise, where cyclic permutations of the vertices represent the same cell. 
\end{defi}
\par
\begin{figure}[htbp]
\begin{center}
\begin{tikzpicture}[scale=.8]
% 点の定義
\coordinate(v1)at(0,0);
\coordinate(v2)at(1,-3);
\coordinate(v3)at(2,0.5);
\filldraw [black] (v1) circle (1pt) node[above left]{$v_{\alpha}$};
\filldraw [black] (v2) circle (1pt) node[below]{$v_{\gamma}$};
\filldraw [black] (v3) circle (1pt) node[above right]{$v_{\beta}$};
\draw (v1)--(v2)--(v3)--(v1);
\node at (1,-1.2) {$\mathcal C$};
\node at (5,-1.2) {${\mathcal C}=v_{\alpha}v_{\beta}v_{\gamma}$};
\end{tikzpicture}
\caption{Cell with vertices $v_{\alpha},v_{\beta},v_{\gamma}$}\label{fig:cell}
\end{center}
\end{figure}
\begin{defi}[Quiddity]\label{def:quiddity} 
For a triangulation $\T$ of an $n$-gon and a vertex $v_\alpha$, 
a cell ${\mathcal C}$ is said to be adjacent to $v_\alpha$ if $v_\alpha$ is a vertex of ${\mathcal C}$. 
Let $\cell(v_\alpha)$ be the set of such cells, and write $c_\alpha=\#\cell(v_{\alpha})$. 
We call a sequence $(c_\alpha)_{\alpha\in\Z}$ a quiddity of $\T$, and define $\quid(\T)=(c_\alpha)_{\alpha\in\Z}$. 
Note that $\quid(\T)$ is a cyclic sequence of period $n$; namely, $c_i=c_{i+n}$ for any $i\in\Z$. 
\end{defi}
\par
\begin{defi}[Exterior cell]\label{def:exterior_cell} 
A cell is called an exterior cell if more than two of its sides are edges of ${\mathcal P}$.  
\end{defi}
\begin{prop}\label{prop:quiddity}\quad
\begin{enumerate}[{(}1{)}]
\item For any triangulation $\T$ with quiddity $\quid(\T)=(c_u)_{u\in\Z}$, 
we have $\sum\limits_{u=0}^{n-1}c_u=3(n-2)$.  
\item If $n\geq 4$, then there exist at least two exterior cells.  
\end{enumerate}
\end{prop}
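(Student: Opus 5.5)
For (1) I will use double counting. The sum $\sum_{u=0}^{n-1}c_u$ counts pairs $(v_u,\mathcal C)$ in which $\mathcal C$ is a cell with vertex $v_u$. Every cell has exactly three vertices, so the sum equals $3$ times the number of cells. It then remains to show that $\T$ has exactly $n-2$ cells. I will get this from an angle count. The interior angles of all cells together fill the interior angles of $\mathcal P$, because the diagonals are non-crossing and meet only at vertices, so the cells tile $\mathcal P$ without overlap. This gives $\pi\cdot\#\{\text{cells}\}=(n-2)\pi$.

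As an alternative I will keep a combinatorial count in reserve. Each diagonal lies on exactly two cells, and each edge of $\mathcal P$ lies on exactly one. Counting sides gives $3\cdot\#\{\text{cells}\}=n+2(n-3)=3(n-2)$. Either route yields $\sum c_u=3(n-2)$.

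For (2) I will read ``exterior cell'' as a cell with at least two sides that are edges of $\mathcal P$, i.e. an ear. When $n\ge 4$ no cell can have three polygon edges as sides, so ``more than one'' and ``two'' coincide. The argument is again a count of polygon edges. There are $n$ edges, and each lies in exactly one cell. Suppose at most one cell contained two polygon edges. Every other cell contains at most one, so the total number of edges would be at most $2+(n-3)=n-1<n$, a contradiction. Hence at least two cells contain two polygon edges each, and these are exterior cells.

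For the degenerate possibility of a cell with three polygon edges: such a cell forces $n=3$, which is excluded. A cleaner variant, if preferred, is induction. Pick any diagonal $v_\alpha v_\beta\in\Diag_\T$ (one exists since $n\ge4$). It splits $\mathcal P$ into two smaller triangulated polygons, and each has an ear not using $v_\alpha v_\beta$. For a triangle this is trivial; otherwise it follows from the inductive hypothesis, since two ears cannot both contain the same side. These give two distinct exterior cells of $\T$.

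The main obstacle is only the geometric fact that the cells tile $\mathcal P$, equivalently that each edge lies in one cell and each diagonal in two. I will take this as standard for triangulations by non-crossing diagonals.
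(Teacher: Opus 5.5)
Your proof is correct and follows the same approach as the paper. Part (1) is the same triple count over cells; the paper simply assumes there are $n-2$ cells, and your angle and side counts justify that. Part (2) is the same count of the $n$ polygon edges among the cells against the bound $2+(n-3)=n-1$, with a cell having three polygon edges ruled out because it forces $n=3$, and your reading of ``exterior cell'' as a cell with at least two polygon edges is exactly how the paper's proof uses the term.
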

\begin{proof} (1) When we compute the sum $\sum\limits_{u=0}^{n-1}c_u$, 
each cell ${\mathcal C}=v_{\alpha}v_{\beta}v_{\gamma}$ is counted three times, once at each of $c_{\alpha}, c_{\beta}$, and $c_{\gamma}$. Thus, we have $\sum\limits_{u=0}^{n-1}c_u=3(n-2)$. 
\par
\noindent
(2) Let ${\mathcal C}_1,\;\dots,\;{\mathcal C}_{n-2}$ be the $n-2$ cells in $\T$. 
For each ${\mathcal C}_u$, let $t_u$ be the number of sides of ${\mathcal C}_u$ that are also edges 
of ${\mathcal P}$. Because every edge of ${\mathcal P}$ is a side of exactly one cell, 
we have $\sum\limits_{u=1}^{n-2}t_u=n$. 

First, assume that there exists no exterior cell. Then $t_u\leq 1$ for all $1\leq u\leq n-2$, and therefore we have 
$\sum\limits_{u=1}^{n-2}t_u\leq n-2$. This is a contradiction. 
Next, consider the case where there exists exactly one exterior cell ${\mathcal C}_{u_0}$. 
Assume that $t_{u_0}=2$. Since $t_u\leq 1\;(u\neq u_0)$, we have 
$\sum\limits_{u=1}^{n-2}t_u\leq 2+n-3=n-1$, which is a contradiction. 
entries we have $t_{u_0}=3$, which implies that $n=3$. 
Hence, if $n\geq4$, then there exists at least two exterior cells. 
\end{proof}
\vskip.2cm
\begin{defi}[Base edge, base cell] 
Fix a triangulation $\T$ and an integer $i\in\Z$. 
An edge $v_{i-1}v_i\in{\mathcal E}$ is called a base edge. 
A cell containing the base edge is called a base cell. 
We often denote a triangulation by $(\T,i)$ to indicate the base edge.
\end{defi}
\begin{defi}[Base side] 
For a non-base cell ${\mathcal C}$, there exists a unique side 
$v_{\alpha}v_{\beta}$ with the following properties: 
\begin{enumerate}[{(}1{)}]
\item $v_{\alpha}v_{\beta}\in\Diag_{\T}$,  
\item $v_{\alpha}v_{\beta}$ divides the $n$-gon ${\mathcal P}$ into two sub-polygons, one of which contains the base cell and the other contains 
${\mathcal C}$.  
\end{enumerate}
We call the side $v_{\alpha}v_{\beta}$ the base side of ${\mathcal C}$. 
\end{defi}
In the remainder of this paper, if the base side of ${\mathcal C}$ is $v_{\alpha}v_{\beta}$, then we write 
${\mathcal C}=v_{\alpha}v_{\beta}v_{\gamma}$ so that the vertices are arranged clockwise starting from the base side. 
\begin{defi}[Parent cell, child cell] 
For a non-base cell ${\mathcal C}$, the unique cell ${\mathcal C}'$ that shares the base side of ${\mathcal C}$ is called 
the parent  cell of ${\mathcal C}$; conversely ${\mathcal C}$ is called a child cell of ${\mathcal C}'$. Ancestor cells and descendant cells are defined analogously. 
\end{defi}
\begin{defi}[Level] 
If a cell ${\mathcal C}$ has $L$ ancestors, then it is said to be of level $L$. 
\end{defi}

\begin{defi}[Descendant degree]\label{def:desc_deg} 
Fix a triangulation $(\T,i)$ and a vertex $v_j$. We define the descendant degree 
$\mu_{i,j}$ of $v_j$ as follows. 
The $c_j$ cells adjacent to $v_j$ are numbered counterclockwise around $v_j$ as shown in Figure \ref{fig:occ1_descendant}: 
$\cell(v_j)=\{{\mathcal C}_0,\;{\mathcal C}_1,\;\dots,\;{\mathcal C}_{c_j-1}\}$. 
If ${\mathcal C}_{\mu}$ is the cell of minimum level among these cells, then 
$v_j$ is said to be of descendant degree $\mu$, which is denoted by 
$\mu_{i,j}$. 
\end{defi}
\noindent
For integers $i,j\in\Z$ with $i\leq j\leq i+n-1$, let $V[i,j-2]$ be the subset of $V$ defined by  $V[i,j-2]=\{v_\alpha\in V\mid i\leq \alpha\leq j-2\}$. 
Note that $\mu_{i,j}$ is the number of diagonals $v_{\alpha}v_{j}$ 
with $v_{\alpha}\in V[i,j-2]$; that is 
\begin{align}\label{another_def:descendant}
\mu_{i,j}=\#{\Diag_{\T}}(V[i,j-2],\{v_j\})=\delta(V[i,j-2],\{v_j\})\;, 
\end{align}
where $V[i,j-2]$ is regarded as the empty set if $j=i$ or $i+1$. 
In particular, the descendant degrees of $v_i,v_{i+1}$, and $v_{i+n-1}$ are 
$\mu_{i,i}=0,\mu_{i,i+1}=0$, and $\mu_{i,i+n-1}=c_{i-1}-1$, respectively. 
In Figure \ref{fig:occ1_descendant}, we illustrate a triangulation 
$(\T,i)$ for some $j\neq i,i+n-1$. 
\par
\begin{figure}[htbp]
\begin{center}
\begin{tikzpicture}[scale=.85]
\coordinate(v0)at(0,0);
\coordinate(v1)at(-3,0);
\coordinate(v2)at(-4,1);
\coordinate(v3)at(-4.5,2);
\coordinate(v4)at(-4.5,3);
\coordinate(v5)at(-4.2,4);
\coordinate(v6)at(-3.3,5);
\coordinate(v6_1)at(-4.8,5.7);
\coordinate(v6_2)at(-6,6.2);
\coordinate(v7)at(-7,6.9);
\coordinate(v8)at(-8,8);
\coordinate(v9)at(-8.5,9.5);
\coordinate(v10)at(-2,10);
\coordinate(v11)at(3,9);
\coordinate(v12)at(2.5,7.8);
\coordinate(v12_1) at(2,6.8);
\coordinate(v12_2) at(1.3,5.8);
\coordinate(v13)at(0,5);
\coordinate(v14)at(1,4);
\coordinate(v15)at(1.5,3);
\coordinate(v16)at(1.5,2);
\coordinate(v17)at(1,1);
\filldraw [black] (v0) circle (1pt) node[below right]{\hspace{-.2cm}$v_{i-1}=v_{i+n-1}$};
\filldraw [black] (v1) circle (1pt) node[below]{$v_i$};
\filldraw [black] (v2) circle (1pt);
\filldraw [black] (v5) circle (1pt);
\filldraw [black] (v6) circle (1pt) node[left]{\raisebox{-.2cm}{$v_{x_{\mu}}$}\hspace{.1cm}};
\filldraw [black] (v6_1) circle (1pt) node[below]{\raisebox{-.2cm}{$v_{x_{\mu-1}}$}\hspace{.1cm}};
\filldraw [black] (v6_2) circle (1pt) node[below left]{$v_{x_3}$};
\filldraw [black] (v7) circle (1pt) node[below left]{$v_{x_2}$};
\filldraw [black] (v8) circle (1pt) node[below left]{$v_{x_1}$};
\filldraw [black] (v9) circle (1pt) node[left]{$v_{j-1}$};
\filldraw [black] (v10) circle (1pt) node[above]{$v_{j}$};
\filldraw [black] (v11) circle (1pt) node[right]{$v_{j+1}$};
\filldraw [black] (v12) circle (1pt) node[below right]{$v_{x_{c_j-1}}$};
\filldraw [black] (v12_1) circle (1pt) node[below right]{$v_{x_{c_j-2}}$};
\filldraw [black] (v12_2) circle (1pt) node[below right]{$v_{x_{\mu+2}}$};
\filldraw [black] (v13) circle (1pt) node[right]{\hspace{.1cm}\raisebox{-.3cm}{$v_{x_{\mu+1}}$}};
\filldraw [black] (v14) circle (1pt);
\filldraw [black] (v17) circle (1pt);
\draw (v0)--(v1) node[midway, above]{base edge};
\draw (v1)--(v2);
\draw (v2)--(v3);
\draw[dotted] (v3)--(v4);
\draw (v4)--(v5);
\draw (v5)--(v6);
\draw (v6)--(v6_1);
\draw[thick, dotted] (v6_1)--(v6_2);
\draw[thick, dotted] (-4.5,6.7)--(-5,7);
\draw (v6_2)--(v7);
\draw (v7)--(v8);
\draw (v8)--(v9);
\draw (v6)--(v10);
\draw (v6_1)--(v10);
\draw (v6_2)--(v10);
\draw (v7)--(v10);
\draw (v8)--(v10);
\draw (v9)--(v10);
\draw (v10)--(v11);
\draw (v10)--(v12);
\draw (v10)--(v12_1);
\draw (v10)--(v12_2);
\draw (v10)--(v13);
\draw (v11)--(v12);
\draw (v12)--(v12_1);
\draw[thick, dotted] (v12_1)--(v12_2);
\draw[thick, dotted] (1.2,7.2)--(.9,6.7);
\draw (v12_2)--(v13);
\draw (v13)--(v14);
\draw (v14)--(v15);
\draw[dotted]  (v15)--(v16);
\draw (v16)--(v17);
\draw (v17)--(v0);
\draw (v6)--(v13)node[midway, above]{base side};
\node at(-1.5,3) {${\mathcal P}'$};
\node at(1.7,8.8) {${\mathcal C}_{c_j-1}$};
\node at(1.7,7.7) {${\mathcal C}_{c_j-2}$};
\node at(0.2,6.3) {${\mathcal C}_{\mu+1}$};
\node at(-1.8,6.2) {${\mathcal C}_{\mu}$};
\node at(-3.6,6.3) {${\mathcal C}_{\mu-1}$};
\node at(-5.6,7.3) {${\mathcal C}_2$};
\node at(-6.4,7.9) {${\mathcal C}_1$};
\node at(-6.8,8.9) {${\mathcal C}_0$};
\end{tikzpicture}
\caption{Descendant degree of $v_j$ 
for a triangulation $(\T,i)$ ($j\neq i,\,i+n-1$)}\label{fig:occ1_descendant}
\end{center}
\end{figure}

\newpage
\subsection{Farey labelings}
\begin{defi}[Farey sum]\label{def:Farey_sum} 
Let $\PP^1(\Q)$ be the union of the set of rational numbers $\Q$ and the point at infinity 
$\dfrac{1}{0}$: $\Q\cup\left\{\dfrac{1}{0}\right\}$.  
We call elements of $\PP^1(\Q)$ rational numbers. 
For two rational numbers $\rho_{\alpha}=\dfrac{r_\alpha}{s_\alpha},
\rho_\beta=\dfrac{r_\beta}{s_\beta}\in\PP^1(\Q)$, the Farey sum $\rho_\alpha\oplus \rho_\beta\in\PP^1(\Q)$ is defined by
\begin{align*}
\rho_\alpha\oplus \rho_\beta=\dfrac{r_\alpha+r_\beta}{s_\alpha+s_\beta}\;. 
\end{align*}
\end{defi}
\begin{defi}[Farey labeling {\cite[Definition 2.7]{morier2019farey}}]\label{def:Farey_graph}
Fix a triangulation $(\T,i)$. 
Label each vertex $v_j$, $i\leq j\leq i+n-1$, 
by a rational number $\rho_{i,j}\in\PP^1(\Q)$ as follows. 
First, the labels $\rho_{i,i+n-1}$ and $\rho_{i,i}$ 
of two vertices $v_{i+n-1}$ and $v_i$, the endpoints of the base edge $v_{i+n-1}v_i$, 
are defined by $\rho_{i,i+n-1}=\dfrac{0}{1}$ and $\rho_{i,i}=\dfrac{1}{0}$, respectively. 
Next, the label $\rho_{i,u}$ of the remaining vertex $v_u$ in the base cell 
is defined as the Farey sum of $\rho_{i,i+n-1}$ and $\rho_{i,i}$:  
\begin{align*}
\rho_{i,u}=\rho_{i,i+n-1}\oplus \rho_{i,i}=\dfrac{0+1}{1+0}=\dfrac{1}{1}\;.
\end{align*}
Suppose that the vertices of a cell have been labeled by rational numbers. Then 
the two vertices $v_{\alpha},v_{\beta}$, the endpoints 
of the base side of its child cell, 
have labels $\rho_{i,\alpha}=\dfrac{r_{i,\alpha}}{s_{i,\alpha}},
\rho_{i,\beta}=\dfrac{r_{i,\beta}}{s_{i,\beta}}$, respectively. 
The label $\rho_{i,\gamma}=\dfrac{r_{i,\gamma}}{s_{i,\gamma}}$ 
of the remaining vertex $v_{\gamma}$ of the child cell is defined by 
\begin{align*}
\rho_{i,\gamma}=\dfrac{r_{i,\gamma}}{s_{i,\gamma}}
=\rho_{i,\alpha}\oplus \rho_{i,\beta}=\dfrac{r_{i,\alpha}+r_{i,\beta}}{s_{i,\alpha}+s_{i,\beta}}\quad 
(r_{i,\gamma}=r_{i,\alpha}+r_{i,\beta}\,,\;s_{i,\gamma}=s_{i,\alpha}+s_{i,\beta})\;.
\end{align*}
Note that the rational 
$\rho_{i,\gamma}=\dfrac{r_{i,\gamma}}{s_{i,\gamma}}$ is irreducible. 
Indeed, if $r_{i,\alpha}s_{i,\beta}-s_{i,\alpha}r_{i,\beta}=1$, then 
$r_{i,\alpha}s_{i,\gamma}-s_{i,\alpha}r_{i,\gamma}=1$ and $r_{i,\gamma}s_{i,\beta}-s_{i,\gamma}r_{i,\beta}=1$. 
The sequence $(\rho_{i,j})_{i\leq j\leq i+n-1}$ of rational numbers is called the 
Farey labeling of $(\T,i)$ and is denoted by $\Farey(\T,i)$.  
\par
We extend the Farey labeling $\Farey(\T,i)=(\rho_{i,j})_{i\leq j\leq i+n-1}$ anti-periodically 
as follows. Its anti-periodic extension is an infinite sequence $(\rho_{i,u})_{u\in\Z}$ defined by 
\begin{align}
\rho_{i,u}=\dfrac{r_{i,u}}{s_{i,u}}=\dfrac{(-1)^{t}r_{i,j}}{(-1)^ts_{i,j}}\label{eqn:Farey_graph_infty}
\end{align}
for any $u\in\Z$ with $u=j+tn$ and $i\leq j\leq i+n-1$. 
We denote this sequence by $\Farey(\T,i)^{\per}$. 
\end{defi}
\subsection{Conway--Coxeter friezes}
Let $D\subset \Z\times \Z$ be defined by 
$D=\{(k,\ell)\in\Z\times \Z\mid k\leq \ell\}$.  
In this section, we define the notion of friezes following Coxeter \cite{coxeter1971frieze}. 
Our indexing $(k,\ell)\in D$ for the entries $\sigma_{k,\ell}$ 
differs from that of the entries $c_{a,b}$ 
used by Morier-Genoud and Ovsienko \cite{morier2021quantum}. 
The correspondence is given by $\sigma_{k,\ell}=c_{k+1,\ell-1}$ for any $(k,\ell)\in D$. 
This notation makes the statements concerning the degrees 
of the entries $\sigma_{k,\ell}$ 
in Theorem \ref{prop:q_CCF_weighted_Farey} and Proposition \ref{prop:q_CCF_min_deg} 
easier to understand.  
\begin{defi}[Frieze \cite{coxeter1971frieze}]\label{def:CCF} 
Let $(c_u)_{u\in\Z}$ be a cyclic sequence of positive integers with period $n$. 
For each $(k,\ell)\in D$, we assign an integer $\sigma_{k,\ell}\in\Z$ as follows. 
First, for $(k,\ell)\in D$ with $\ell-k=0,\;1,\;2$, define $\sigma_{k,\ell}$ by 
\begin{align}\label{eqn:CCF0}
\sigma_{k,\ell}=
\left\{
\begin{array}{cl}
0& \text{if $\ell-k=0$}, \\
1& \text{if $\ell-k=1$}, \\
c_{k+1}& \text{if $\ell-k=2$\;.}
\end{array}
\right. 
\end{align}
Moreover, the integers $\sigma_{k,\ell}$ for $(k,\ell)\in D$ with $\ell-k\geq3$ 
are recursively defined so that they satisfy the following unimodular rule:  
\begin{align}\label{eqn:unimodular}
\sigma_{k,\ell-1}\sigma_{k+1,\ell}-\sigma_{k+1,\ell-1}\sigma_{k,\ell}&=1
\end{align}
We call the array $\{\sigma_{k,\ell}\}_{(k,\ell)\in D}$ of these integers 
the Conway--Coxeter frieze, abbreviated as CCF, for $(c_u)_{u\in\Z}$. 
\end{defi}

It is not immediate that $\sigma_{k,\ell}\in\mathbb{Z}$ is well-defined from 
\eqref{eqn:unimodular}. 
As in Theorem \ref{thm:CCF}, Conway and Coxeter \cite[Problem 28, 29]{conway1973triangulated,conway1973triangulated_conti} 
established the well-definedness of CCFs for quiddities of triangulations of polygons. 
Since we work only with such CCFs, we omit the details in the general case. 
\if0
%%%%%% 
It is not immediate that $\sigma_{k,\ell}\in\Z$ is well-defined. 
In $(\ref{eqn:unimodular})$, if $\sigma_{k+1,\ell-1}=0$, then 
$\sigma_{k,\ell}$ is not defined. Furthermore, even if 
$\sigma_{k+1,\ell-1}\neq0$, it is not trivial from the above definition 
whether $\sigma_{k,\ell-1}\sigma_{k+1,\ell}-1$ is divisible by 
$\sigma_{k+1,\ell-1}$. 
As in Theorem \ref{thm:CCF}, 
Conway and Coxeter 
\cite[Problem 28, 29]{conway1973triangulated,conway1973triangulated_conti} 
clarified the well-definedness of CCFs for the quiddity $(c_u)_{u\in\Z}$ 
of a triangulation $\T$ of an $n$-gon. 
Since we work only with such CCFs in this paper, we omit the details of the 
well-definedness for the general case.
\fi
\vskip.5cm

We arrange the entries $\sigma_{k,\ell}$ in the CCF $\{\sigma_{k,\ell}\}_{(k,\ell)\in D}$ 
on a plane so that the entries $\sigma_{k,\ell}$ with $\ell-k=m,0\leq m\leq n+1$ 
are arranged in the $m$-th row from left to right with $k$ increasing.  
Then the $m+1$-th row are placed 
half a column to the right of the $m$-th row as shown in Figure \ref{fig:CCF0}. 
We call the entries $\sigma_{k,\ell}$ the $(k,\ell)$ entries of the CCF. 
For fixed $k\in \Z$, the entries $\sigma_{k,\ell'}$ with $\ell'\in\Z$ and $k\leq \ell'$ 
are called the $k$-th diagonal entries, and for fixed $\ell\in \Z$, 
the entries $\sigma_{k',\ell}$ with $k'\in\Z$ and $k'\leq \ell$ are called the $\ell$-th anti-diagonal 
entries.  
\par
\begin{figure}[htbp]
\begin{center}
\begin{tikzpicture}[scale=1.2]
\foreach \k in{-1,...,3}{
 \pgfmathsetmacro{\l}{int(\k)}
    \draw[dotted, semithick] (\k-.5,2.1) to (\k+3,-2.8);
}
\node at(-1.5,2.1)[above,align=left]{
{\footnotesize the}\\[-.2cm]
{\footnotesize $-1$-st}\\[-.2cm]
{\footnotesize diag.}
};
\node at(-.5,2.1)[above,align=left]{
{\footnotesize the}\\[-.2cm]
{\footnotesize $0$-th}\\[-.2cm]
{\footnotesize diag.}
};
\node at(.5,2.1)[above,align=left]{
{\footnotesize the}\\[-.2cm]
{\footnotesize $1$-st}\\[-.2cm]
{\footnotesize diag.}
};
\node at(1.5,2.1)[above,align=left]{
{\footnotesize the}\\[-.2cm]
{\footnotesize $2$-nd}\\[-.2cm]
{\footnotesize diag.}
};
\node at(2.5,2.1)[above,align=left]{
{\footnotesize the}\\[-.2cm]
{\footnotesize $3$-rd}\\[-.2cm]
{\footnotesize diag.}
};
\node at(2.8,2.1) [above right]{$\cdots\;\cdots$};
\foreach \k in{4,...,8}{
 \pgfmathsetmacro{\l}{int(\k)}
    \draw[dotted,semithick] (\k+.5,2.1) to (\k-3,-2.8);
\node at(\k+.5,2.1)[above,align=left]{
{\footnotesize the}\\[-.2cm]
{\footnotesize $\l$-th}\\[-.2cm]
{\footnotesize anti-}\\[-.2cm]
{\footnotesize diag.}
};
}
\node at(-2,1.4) {$\cdots$};
\foreach \k in{-1,...,9}{   
    \node at(\k,1.4) {$\sigma_{\k,\k}$};
}
\node at(10,1.4) {$\cdots$};
\node at(-2-.5,.7) {$\cdots$};
\foreach \k in{-1,...,9}{  
 \pgfmathsetmacro{\l}{int(\k-1)}
   % 計算結果を座標として使用
\node at(\k-.5,.7) {$\sigma_{\l,\k}$};
}
\node at(10-.5,.7) {$\cdots$};
\node at(-2,0) {$\cdots$};
\foreach \k in{-1,...,9}
{  
 \pgfmathsetmacro{\l}{int(\k-1)}
 \pgfmathsetmacro{\m}{int(\k+1)}
   % 計算結果を座標として使用
   \node at(\k,0) {$\sigma_{\l,\m}$};
}
\node at(10,0) {$\cdots$};
\node at(-2-.5,-.7) {$\cdots$};
\foreach \k in{-1,...,9}{
 \pgfmathsetmacro{\l}{int(\k-2)}
 \pgfmathsetmacro{\m}{int(\k+1)}
 \node at(\k-.5,-.7) {$\sigma_{\l,\m}$};
}
\node at(10-.5,-.7) {$\cdots$};
\node at(-2,-1.4) {$\cdots$};
\foreach \k in{-1,...,9}{   
     \pgfmathsetmacro{\l}{int(\k-2)}
     \pgfmathsetmacro{\m}{int(\k+2)}
     \node at(\k,-1.4) {$\sigma_{\l,\m}$};
}
\node at(10,-1.4) {$\cdots$};
\node at(-2-.5,-2.1) {$\cdots$};
\foreach \k in{-1,...,9}{   
     \pgfmathsetmacro{\l}{int(\k-3)}
     \pgfmathsetmacro{\m}{int(\k+2)}
     \node at(\k-.5,-2.1) {$\sigma_{\l,\m}$};
}
\node at(10-.5,-2.1) {$\cdots$};
\node at(-2,-2.8) {$\cdots$};
\foreach \k in{-1,...,9}{   
     \pgfmathsetmacro{\l}{int(\k-3)}
     \pgfmathsetmacro{\m}{int(\k+3)}
    \node at(\k,-2.8) {$\sigma_{\l,\m}$};
}
\node at(10,-2.8) {$\cdots$};
%\node at(-2-.5,-3.5) {$\cdots$};
\foreach \k in{-1,...,9}{   
   \node at(\k-.5,-3.5) {$\cdots$};
}
\node at(10-.5,-3.5) {$\cdots$};
\draw[thick] (-.92,.6)--(-1.60,.1)--(.43,-2.6)--(1.11,-2.1)--(-.92,.6);
\path [draw, ->, thin, bend right = 30] (0.1,-2.15) to (-.7,-4);
\node at(-.7,-4) [below]{$D^{\md}_{-2,3}$};
\draw[thick] (.45,.3)--(3.6,.3)--(1.98,-1.9)--(.45,.3);
\path [draw, ->, thin, bend left = 30] (2.,-1.88) to (2.5,-4);
\node at(2.5,-4) [below]{$D^{\tri}_{0,4}$};
\draw[thick] (4.85,0.5)--(5.5,0)--(4.15,-1.9)--(3.5,-1.45)--(4.85,0.5);
\path [draw, ->, thin, bend right = 30] (3.93,-1.75) to (4,-4);
\node at(4,-4) [below]{$D^{\ad}_{2,6}$};
\draw[thick] (6.85,0.5)--(8.12,-.445)--(6.69,-2.5)--(5.4,-1.55)--(6.85,0.5);
\path [draw, ->, thin, bend left = 30] (6.98,-2.1) to (7.2,-4);
\node at(7.2,-4) [below]{$D^{\rec}_{4,6,9}$};
\end{tikzpicture}
\caption{Conway-Coxeter frieze $\{\sigma_{k,\ell}\}_{(i,j)\in D}$}\label{fig:CCF0}
\end{center}
\end{figure}
\newpage
For each $(k,\ell)\in D$ and $u\in\Z$, define subsets of $D$, 
$D^{\tri}_{k,\ell},D^{\md}_{k,\ell},D^{\ad}_{k,\ell}$, and $D^{\rec}_{k,u,\ell}$ as follows. 
For $(k,\ell)\in D$ with $\ell-k=0,1$, set 
$D^{\tri}_{k,\ell}=D^{\md}_{k,\ell}=D^{\ad}_{k,\ell}=D^{\rec}_{k,u,\ell}=\phi$. 
For $(k,\ell)\in D$ with $\ell-k\geq2$, define
\begin{align*}
&D^{\tri}_{k,\ell}=\{(a,b)\in D\mid k\leq a, a+2\leq b\leq \ell\}\,,\\
&D^{\md}_{k,\ell}=\{(k,k+2),(k,k+3),\dots,(k,\ell)\}=\{(k,b)\in D\mid k+2 \leq b \leq\ell\}\,,\\
&D^{\ad}_{k,\ell}=\{(k,\ell),(k+1,\ell),\dots,(\ell-2,\ell)\}=\{(a,\ell)\in D\mid k\leq a\leq \ell-2\}\,,\\
&D^{\rec}_{k,u,\ell}=
\left\{
\begin{array}{ll}
\{(a,b)\in D\mid k\leq a\leq u, u+2\leq b\leq\ell\}& \text{if $k\leq u,u+2\leq \ell$}\;,\\[.2cm]
\phi& \text{if $u<k$ or $\ell<u+2$}\;.
\end{array}
\right.
\end{align*}
We call $D^{\tri}_{k,\ell},D^{\md}_{k,\ell}$, and $D^{\ad}_{k,\ell}$ 
the triangular region, the diagonal region, and the anti-diagonal region for $(k,\ell)\in D$, respectively. 
We also call $D^{\rec}_{k,u,\ell}$ the rectangular region for $(k,u,\ell)$. 
In Figure \ref{fig:CCF0}, we illustrate the regions 
$D^{\tri}_{0,4},D^{\md}_{-2,3},D^{\ad}_{2,6}$, and $D^{\rec}_{4,6,9}$ enclosed by frames.  
\begin{defi}[Number of occurrences of $1$]\label{def:occ1}
Let $\{\sigma_{k,\ell}\}_{(k,\ell)\in D}$ be the CCF for $(c_u)_{u\in\Z}$. 
The sets
$U^{\tri}_{k,\ell}, U^{\md}_{k,\ell}, U^{\ad}_{k,\ell}$, and $U^{\rec}_{k,u,\ell}$ 
are defined as the subsets of the corresponding regions 
consisting of those elements $(a,b)$ for which $\sigma_{a,b}=1$. 
We denote their cardinalities by 
$\nu^{\tri}_{k,\ell}, \nu^{\md}_{k,\ell}, \nu^{\ad}_{k,\ell}$, and 
$\nu^{\rec}_{k,u,\ell}$. They are defined precisely as follows: 
\begin{align*}
&U^{\tri}_{k,\ell}=\{(a,b)\in D^{\tri}_{k,\ell}\mid\sigma_{a,b}=1\}, 
\quad \nu^{\tri}_{k,\ell}=\#{U}^{\tri}_{k,\ell}\,,\\
&U^{\md}_{k,\ell}=\{(k,b)\in D^{\md}_{k,\ell}\mid \sigma_{k,b}=1\},\quad 
\nu^{\md}_{k,\ell}=\#{U}^{\md}_{k,\ell}\,,\\
&U^{\ad}_{k,\ell}=\{(a,\ell)\in D^{\ad}_{k,\ell}\mid \sigma_{a,\ell}=1\},\quad
\nu^{\ad}_{k,\ell}=\#{U}^{\ad}_{k,\ell}\,,\\
&U^{\rec}_{k,u,\ell}=\{(a,b)\in D^{\rec}_{k,u,\ell}\mid \sigma_{a,b}=1\},\quad 
\nu^{\rec}_{k,u,\ell}=\#{U}^{\rec}_{k,u,\ell}\;.
\end{align*}
We call these cardinalities the numbers of occurrences of 1.
\end{defi}
\vskip.2cm
If the entries in the first through the $m$-th rows are positive and 
those in the $m$-th row are all $1$, then, by the unimodular rule,  
the entries in the $m+1$-st row are all $0$ and 
those in the $m+2$-nd row are all $-1$. 
We call such a CCF a positive CCF of width $m$ and denote it by 
$\{\sigma_{k,\ell}\}_{(k,\ell)\in D_{m+2}}$, where, for $N>0$, we set 
$D_{N}=\{(k,\ell)\in D\mid 0\leq \ell-k\leq N\}$. 
\par
As will be stated in Theorem \ref{thm:CCF} below, 
Conway and Coxeter \cite[Problem 28,\,29]{conway1973triangulated,conway1973triangulated_conti} 
showed that positive CCFs of finite width are parameterized by triangulations of polygons. 
\begin{defi}[CCFs associated with triangulations]\label{def:quiddity_CCF} 
Let $\T$ be a triangulation of an $n$-gon.  
The CCF for its quiddity $\quid(\T)$ 
is called the CCF associated with $\T$ and is denoted by $\CCF(\T)$.   
\end{defi}

\begin{thm}[{\cite[Problem 28,\,29]{conway1973triangulated,conway1973triangulated_conti}}]\label{thm:CCF}\quad
\begin{enumerate}[{(}1{)}]
\item For any triangulation of an $n$-gon, the associated  CCF $\CCF(\T)$ 
is a positive CCF of width $n-1$.  
\item For any positive CCF of width $m$, there exists 
a triangulation $\T$ of an $m+1$-gon such that the given CCF is $\CCF(\T)$. 
\end{enumerate}
\end{thm}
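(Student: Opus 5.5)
The plan is to reformulate the unimodular rule as a linear recurrence along the diagonals and then argue by induction on $n$, removing ears (vertices with $c_u=1$).

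\emph{Linear recurrence.} For a sequence $(c_u)$, put $M(c)=\begin{pmatrix} c&-1\\ 1&0\end{pmatrix}$. By the standard equivalence, as long as the entries of the intermediate rows are nonzero, the unimodular rule \eqref{eqn:unimodular} with the initial rows \eqref{eqn:CCF0} is equivalent to
\begin{align*}
\sigma_{k,\ell+1}=c_{\ell}\,\sigma_{k,\ell}-\sigma_{k,\ell-1}.
\end{align*}
In other words, $\sigma_{k,\ell}$ is an entry of the partial product $M(c_{k+1})M(c_{k+2})\cdots M(c_{\ell-1})$, up to sign and transposition conventions that I will fix once. Conversely, any array defined this way satisfies \eqref{eqn:unimodular}, because $\det M(c)=1$. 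The division step in \eqref{eqn:unimodular} is determined uniquely when $\sigma_{k+1,\ell-1}\neq 0$. So once I show that rows $1,\dots,n-1$ are positive, the frieze is well defined and coincides with the matrix-product array.

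\emph{Part (1).} First I show $M(c_0)\cdots M(c_{n-1})=-I$ for the quiddity of any triangulated $n$-gon, by induction on $n$. The case $n=3$ is a direct check with $c=(1,1,1)$. For $n\ge4$, Proposition \ref{prop:quiddity}(2) gives an exterior cell, hence a vertex $v_j$ with $c_j=1$. Cutting off this ear gives a triangulated $(n-1)$-gon with $c_{j\pm1}$ decreased by $1$, and the identity
\begin{align*}
M(a+1)M(1)M(b+1)=M(a)M(b)
\end{align*}
completes the induction.

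For positivity I use the Farey labeling. Take the base edge at $v_kv_{k+1}$ and use the labeling $\Farey(\T,k+1)$, whose denominators are positive at all vertices other than the base vertex labelled $\frac10$. I identify $\sigma_{k,\ell}$ with the determinant $r_{k+1,k}s_{k+1,\ell}-s_{k+1,k}r_{k+1,\ell}$, essentially a denominator of the labeling. Indeed, both sides satisfy the same three-term recurrence, because a vertex label is the Farey sum of the two labels of a base side. This gives $\sigma_{k,\ell}>0$ for $k<\ell<k+n$. In particular $\sigma_{k,k+n-1}=1$, because that vertex is adjacent to the base-edge vertex labelled $\frac01$, and $\sigma_{k,k+n}=0$. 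Hence $\CCF(\T)$ is positive of width $n-1$.

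\emph{Part (2).} I induct on $m$, with $m=2$ trivial. The first step is to show that some $c_u=1$. If all $c_u\ge 2$, the recurrence gives
\begin{align*}
\sigma_{k,\ell+1}-\sigma_{k,\ell}\ge \sigma_{k,\ell}-\sigma_{k,\ell-1}\ge\cdots\ge \sigma_{k,k+1}-\sigma_{k,k}=1,
\end{align*}
so each diagonal is strictly increasing. This contradicts $\sigma_{k,k+m}=1$ for $m\ge2$.

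Now delete the entry $c_j=1$ and replace $c_{j\pm1}$ by $c_{j\pm1}-1$. By the same matrix identity, the new array is exactly the old frieze with the $j$-th diagonal and the $j$-th anti-diagonal removed, modulo the period. Hence it is again positive, its row $m-1$ consists of $1$'s, and it has period $m$. By induction it comes from a triangulation of an $m$-gon, and gluing an ear at the edge $v_{j-1}v_{j+1}$ recovers the given frieze.

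The main obstacle is this last identification: I have to check carefully, with the indices taken modulo the period, that the reduced array is a \emph{positive} frieze of width exactly $m-1$ and period exactly $m$.
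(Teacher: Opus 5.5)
Your Part (1) follows the paper's route. The ear-removal identity $M(a+1)M(1)M(b+1)=M(a)M(b)$ is the $q=1$ case of the Appendix proof of \eqref{eqn:q-mat_n-1}. Matching frieze entries with Farey data through the three-term recurrence is Proposition~\ref{prop:3desect_cont_frac_CCF}. Two corrections there:
\begin{itemize}
\item With $v_k$ labelled $\frac{-0}{-1}$ in $\Farey(\T,k+1)^{\per}$, your determinant $r_{k+1,k}s_{k+1,\ell}-s_{k+1,k}r_{k+1,\ell}$ equals the \emph{numerator} $r_{k+1,\ell}$, not a denominator. This is consistent with $\rho_{a+1,b}=\sigma_{a,b}/\sigma_{a+1,b}$ in the paper. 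Numerators are positive at every vertex except $v_k$, so your conclusions $\sigma_{k,\ell}>0$, $\sigma_{k,k+n-1}=1$ and $\sigma_{k,k+n}=0$ stand.
\item The relation $V_{\ell-1}+V_{\ell+1}=c_\ell V_\ell$ for the label vectors is not a direct consequence of the Farey-sum rule. It comes from summing along the $c_\ell$ cells of the fan at $v_\ell$ on both sides of its lowest-level cell, with the anti-periodic sign at the two base-edge endpoints. This is Case~1 of the proof of Proposition~\ref{prop:3desect_cont_frac_CCF}.
\end{itemize}

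The paper gives no proof of Part (2); it only cites Conway--Coxeter. Your ear-reduction induction is the standard argument and is sound in outline. However, the step you postpone as the ``main obstacle'' is a genuine gap: nothing in your argument shows that $m+1$ is a period of $(c_u)$.

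The frieze is given for a sequence with some period $n$. Both ``delete $c_j$ modulo the period'' and ``the reduced sequence has period $m$'' presuppose $c_{u+m+1}=c_u$, which is Coxeter's glide symmetry. If you deleted one copy of the ear per period $n\neq m+1$, you would not get a frieze of width $m-1$. The induction hypothesis, which produces an $m$-gon, would then not apply.

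The fix uses your own matrix description, which is valid up to row $m+2$ because rows $1,\dots,m$ are nonzero. Rows $m,m+1,m+2$ are $1,0,-1$, so for every $k$
\[
M(c_{k+1},\dots,c_{k+m+1})=\begin{pmatrix}\sigma_{k,k+m+2}&-\sigma_{k,k+m+1}\\ \sigma_{k+1,k+m+2}&-\sigma_{k+1,k+m+1}\end{pmatrix}=-I .
\]
Comparing this with the same identity at $k+1$ gives $M(c_{k+1})=M(c_{k+m+2})$, i.e.\ $c_{k+1}=c_{k+m+2}$. You then delete $c_{j+t(m+1)}$ for all $t\in\Z$.

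Two smaller points in the same step:
\begin{itemize}
\item The identity $M(a+1)M(1)M(b+1)=M(a)M(b)$ does not by itself handle $\sigma_{j-1,\ell}$ and $\sigma_{k,j+1}$, whose products begin or end at $c_j$. There you need that $M(1)M(b+1)$ and $M(b)$ have the same first row, and that $M(a+1)M(1)$ and $M(a)$ have the same first column.
\item The new entries $c_{j\pm1}-1$ are positive because $c_{j-1}-1=\sigma_{j-2,j+1}$ and $c_{j+1}-1=\sigma_{j-1,j+2}$ lie in row $3\le m$.
\end{itemize}
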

Theorem \ref{thm:CCF} (1) above is equivalent to Proposition \ref{prop:3desect_cont_frac_CCF} (2), 
stated and proved later. 
%
%\newpage
\begin{example}\label{ex:triangular} 
Figure \ref{fig:triangular} shows the triangulation $\T$ with 
$\quid(\T)=(\dots,1,4,2,1,3,4,1,2,3,\dots)$, 
and Figure \ref{fig:CCF} shows the associated CCF $\CCF(\T)$.   
\par
\begin{figure}[htbp]
\begin{center}
\begin{tikzpicture}[scale=.8]
\coordinate(v0)at(0,0);
\coordinate(v1)at(-3,0);
\coordinate(v2)at(-5,2);
\coordinate(v3)at(-5,4.5);
\coordinate(v4)at(-3.5,6.5);
\coordinate(v5)at(-1,7);
\coordinate(v6)at(1.5,6);
\coordinate(v7)at(2.5,4);
\coordinate(v8)at(2.2,2);
\filldraw [black] (v0) circle (1pt) node[below]{$v_0$};
\filldraw [black] (v1) circle (1pt) node[below]{$v_1$};
\filldraw [black] (v2) circle (1pt) node[below left]{$v_2$};
\filldraw [black] (v3) circle (1pt) node[left]{$v_3$};
\filldraw [black] (v4) circle (1pt) node[above]{$v_4$};
\filldraw [black] (v5) circle (1pt) node[above]{$v_5$};
\filldraw [black] (v6) circle (1pt) node[right]{$v_6$};
\filldraw [black] (v7) circle (1pt) node[right]{$v_7$};
\filldraw [black] (v8) circle (1pt) node[right]{$v_8$};
\draw (v0)--(v1);
\draw (v1)--(v2);
\draw (v1)--(v4);
\draw (v1)--(v5);
\draw (v1)--(v8);
\draw (v2)--(v3);
\draw (v2)--(v4);
\draw (v3)--(v4);
\draw (v4)--(v5);
\draw (v5)--(v6);
\draw (v5)--(v7);
\draw (v5)--(v8);
\draw (v6)--(v7);
\draw (v7)--(v8);
\draw (v8)--(v0);
\end{tikzpicture}
\caption{Triangulation $\T$ with $\quid(\T)=(\dots,1,4,2,1,3,4,1,2,3,\dots)$}\label{fig:triangular}
\end{center}
\end{figure}
\par
\newpage
\begin{figure}[htbp]
\begin{center}
\begin{tabularx}{14.5cm}{XXXXXXXXXXXXXXXXXXXX}
&$0$& &$0$& &$0$& &$0$&&$0$&&$0$&&$0$&&$0$&&$0$&&$0$\\
$1$& &$1$& &$1$& &$1$&&$1$&&$1$&&$1$&&$1$&&$1$&&$1$&\\
&$1$& &$4$& &$2$& &$1$&&$3$&&$4$&&$1$&&$2$&&$3$& & $1$\\
$2$&&$3$&&$7$&&$1$&&$2$&&$11$&&$3$&&$1$&&$5$&&$2$\\
&$5$&&$5$&&$3$&&$1$&&$7$&&$8$&&$2$&&$2$&&$3$&&$5$\\
$7$&&$8$&&$2$&&$2$&&$3$&&$5$&&$5$&&$3$&&$1$&&$
7$&\\
&$11$&&$3$&&$1$&&$5$&&$2$&&$3$&&$7$&&$1$&&$2$&&$11$\\
$3$&&$4$&&$1$&&$2$&&$3$&&$1$&&$4$&&$2$&&$1$&&$3$&\\
&$1$&&$1$&&$1$&&$1$&&$1$&&$1$&&$1$&&$1$&&$1$&&$1$\\
$0$& &$0$& &$0$& &$0$&&$0$&&$0$&&$0$&&$0$&&$0$&&$0$&\\
&$-1$& &$-1$& &$-1$& &$-1$&&$-1$&&$-1$&&$-1$&&$-1$&&$-1$&&$-1$
\end{tabularx}
\vskip.5cm
\caption{CCF associated with the triangulation $\T$ with 
$\quid(\T)=(\dots,1,4,2,1,3,4,1,2,3,\dots)$}
\label{fig:CCF}
\end{center}
\end{figure}
\end{example}
\vskip-.5cm
\subsection{Unimodular matrices}
\begin{defi}[Unimodular matrix]\label{def:modular_gp}
Let $\SL(2,\Z)$ denote the group of $2\times2$ unimodular matrices over $\mathbb{Z}$, 
that is, $\SL(2,\Z)=\biggl\{
A=\begin{sumipmatrix}
a&b\\
c&d
\end{sumipmatrix}\;\vrule\;a,b,c,d\in\Z,\det A=1\biggr\}$. Let $(c_u)_{u\in\Z}$ be a sequence of positive integers.  
For any subsequence $(c_i,c_{i+1},\dots,c_j)$ of $(c_u)_{u\in\Z}$, 
we define a matrix $M(c_i,c_{i+1},\dots,c_j)$ in $\SL(2,\Z)$ by 
\begin{align}\label{eqn:prod_mat2}
M(c_i,c_{i+1},\dots,c_j)&=
\begin{pmatrix}
c_i&-1\\
1&0
\end{pmatrix}
\begin{pmatrix}
c_{i+1}&-1\\
1&0
\end{pmatrix}
\cdots
\begin{pmatrix}
c_{j-1}&-1\\
1&0
\end{pmatrix}
\begin{pmatrix}
c_j&-1\\
1&0
\end{pmatrix}\;.
\end{align}
\end{defi}
\subsection{Continued fractions}
\begin{defi}[Negative continued fraction]\label{def:cont_frac_int}
For any subsequence $(c_i,c_{i+1},\dots,c_j)$ of $(c_u)_{u\in\Z}$,  
let $[[c_i,c_{i+1},\dots,c_j]]$ denote a continued fraction defined by 
\begin{align}\label{eqn:cont_frac_int}
[[c_i,c_{i+1},\dots,c_j]]
&=c_i-\dfrac{1}{c_{i+1}
-\dfrac{1}{\ddots
\raisebox{-.2cm}{
$c_{j-2}-\dfrac{1}{c_{j-1}-\dfrac{1}{c_j}}$}
}}\quad.
\end{align}
We call this a negative continued fraction. 
\end{defi}
\noindent
Note that $[[c_i,c_{i+1},\dots,c_j]]$ lies in $\Q$ if and only if $[[c_{i+1},\dots,c_j]]\neq 0$ 
in the formula $(\ref{eqn:cont_frac_int})$. 
We will show in Proposition \ref{prop:3desect_cont_frac_CCF} that 
if the sequence $(c_u)_{u\in\Z}$ is the quiddity of some triangulation $\T$ of an $n$-gon, 
then $[[c_i,c_{i+1},\dots,c_j]]$ lies in $\Q$ for any $i,j$ with 
$i\leq j\leq i+n-2$; moreover, $[[c_{i+1},\dots,c_j]]>0$ for any $i,j$ with $i+1\leq j\leq i+n-2$.  
\subsection{Correspondence between Farey labelings, CCFs, and continued fractions}
The result of Series \cite{series1985modular} is reformulated in Morier-Genoud and 
Ovsienko \cite[Section 2.6]{morier2019farey}, 
implies that $[[c_1,c_2,\dots,c_j]]=\rho_{1,j+1}$ 
for any quiddity $(c_u)_{u\in\Z}$ 
of a triangulation $(\T,1)$ with exactly two exterior cells, where $c_0=c_{k+1}=1$, 
and any integer $j$ with $1\leq j\leq k$.  
Moreover, Morier-Genoud and Ovsienko \cite[Proposition 3.1]{morier2019farey} showed 
that the entries of $M(c_1,c_2,\dots,c_j)$ are equal to the numerators and denominators of 
$[[c_1,c_2,\dots,c_j]]$ and $[[c_1,c_2,\dots,c_{j-1}]]$, 
thereby connecting the matrix product with the continued fractions. 
However, their results are stated for triangulations that contain exactly two exterior cells. 
In Proposition \ref{prop:3desect_cont_frac_CCF} (1) and (3), 
we extend these statements to general triangulations. 
Furthermore, Morier-Genoud, Ovsienko, and Tabachnikov 
\cite[Proposition 2.2.1, Theorem 1]{morier2015sl} provided alternative proofs of 
the results of Coxeter \cite{coxeter1971frieze} and Conway and Coxeter 
\cite[Problem 28,\,29]{conway1973triangulated, conway1973triangulated_conti}, 
which relate CCFs to Farey labelings, 
that $\dfrac{\sigma_{i-1,j+1}}{\sigma_{i,j+1}}=\rho_{i,j+1}$. 
In Proposition \ref{prop:3desect_cont_frac_CCF} (2), we also reformulate these results. 

% Coxeter は単に Farey 列 と frieze の対応（Farey 列の分母分子を並べれることで frieze の対角成分が得られる（1対1になるということは M-G＆O で強調））に注目したが, Conway-Coxeter では, Farey 列が多角形の三角形分割の Farey graph として考えられて, 三角形分割の quiddity が対応する CCF の種数列を与えることを示した。
%
\par
The equation $(\ref{eqn:mat_n-1})$ in the following Lemma \ref{lemm:n-1} 
is the result of Morier-Genoud and Ovsienko \cite[Proposition 3.9]{morier2019farey}, 
in which they compute the unimodular matrix for a particular subsequence $(c_i,c_{i+1},\dots,c_{i+n-1})$ of the quiddity of a triangulation of an $n$-gon. 
We also compute the corresponding unimodular matrices for the subsequences $(c_i,c_{i+1},\dots,c_{i+n-2})$ and $(c_i,c_{i+1},\dots,c_{i+n-3})$.  
\begin{lemm}[cf.\ {\cite[Proposition 3.9]{morier2019farey}}]\label{lemm:n-1}
Let $\quid(\T)=(c_u)_{u\in\Z}$ be the quiddity of a triangulation $\T$ on an $n$-gon. 
For any $i\in\Z$, we have
\begin{align}
&M(c_{i}, c_{i+1},\dots, c_{i+n-1})
=-\begin{pmatrix}
1 & 0 \\
0 & 1 \\
\end{pmatrix}\,,\label{eqn:mat_n-1}\\
&M(c_{i}, c_{i+1},\dots, c_{i+n-2})
=\begin{pmatrix}
0&-1\\
1&-c_{i-1}
\end{pmatrix}\,,\label{eqn:mat_n-2}\\
%\intertext{}
&M(c_{i},c_{i+1},\dots,c_{i+n-3})
=\begin{pmatrix}
1&-c_{i-1}\\
c_{i-2}&-c_{i-2}c_{i-1}+1
\end{pmatrix}\;.\label{eqn:mat_n-3}
\end{align}
\end{lemm}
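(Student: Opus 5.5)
Our plan is to prove \eqref{eqn:mat_n-1} first, by induction on $n$ through the cutting of an ear. Then \eqref{eqn:mat_n-2} and \eqref{eqn:mat_n-3} follow from it by multiplying on the right by inverses of the last one or two factors and using periodicity $c_{i+n-1}=c_{i-1}$, $c_{i+n-2}=c_{i-2}$.

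\textbf{Base case and ear removal.} For $n=3$ the quiddity is $(1,1,1)$, and a direct computation gives $\begin{sumipmatrix}1&-1\\1&0\end{sumipmatrix}^3=-I$. For $n\geq4$, Proposition \ref{prop:quiddity} (2) gives at least two exterior cells. Each is an ear $v_{k-1}v_kv_{k+1}$ with $c_k=1$. The quiddity of the triangulation $\T'$ of the $(n-1)$-gon obtained by cutting this ear is obtained from $(c_u)$ by replacing $(c_{k-1},1,c_{k+1})$ with $(c_{k-1}-1,c_{k+1}-1)$. The key local identity is
\begin{align*}
\begin{pmatrix} a&-1\\1&0\end{pmatrix}\begin{pmatrix} 1&-1\\1&0\end{pmatrix}\begin{pmatrix} b&-1\\1&0\end{pmatrix}
=\begin{pmatrix} a-1&-1\\1&0\end{pmatrix}\begin{pmatrix} b-1&-1\\1&0\end{pmatrix},
\end{align*}
which we check by direct multiplication. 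Both sides equal $\begin{sumipmatrix} ab-a-b & 1-a\\ b-1&-1\end{sumipmatrix}$.

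\textbf{Handling the cyclic start.} Here lies the one subtlety: the window $(c_i,\dots,c_{i+n-1})$ might begin or end at the ear, so the triple $c_{k-1},c_k,c_{k+1}$ need not be consecutive inside the product. To avoid this, we first show that the statement for one $i$ implies it for all $i$. If $M(c_i,\dots,c_{i+n-1})=-I$, then conjugating by the first factor $A_i=\begin{sumipmatrix}c_i&-1\\1&0\end{sumipmatrix}$ gives
\begin{align*}
A_i^{-1}(-I)A_i=M(c_{i+1},\dots,c_{i+n}),
\end{align*}
since $c_{i+n}=c_i$. The left side is still $-I$. So it suffices to pick $i$ with $k-1,k,k+1$ all inside $[i,i+n-1]$, for example $i=k-1$. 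Then the identity reduces $M$ for $\T$ to $M$ for $\T'$ at the corresponding start, which is $-I$ by induction.

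\textbf{Deriving the other two formulas.} With $A_u=\begin{sumipmatrix}c_u&-1\\1&0\end{sumipmatrix}$, we have $A_u^{-1}=\begin{sumipmatrix}0&1\\-1&c_u\end{sumipmatrix}$. Then
\begin{align*}
M(c_i,\dots,c_{i+n-2})=-A_{i-1}^{-1}=\begin{pmatrix}0&-1\\1&-c_{i-1}\end{pmatrix},
\end{align*}
which is \eqref{eqn:mat_n-2}. Multiplying this on the right by $A_{i-2}^{-1}$ gives
\begin{align*}
\begin{pmatrix}0&-1\\1&-c_{i-1}\end{pmatrix}\begin{pmatrix}0&1\\-1&c_{i-2}\end{pmatrix}
=\begin{pmatrix}1&-c_{i-2}\\c_{i-1}&1-c_{i-1}c_{i-2}\end{pmatrix}.
\end{align*}

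\textbf{The expected obstacle: indexing in \eqref{eqn:mat_n-3}.} This last computation gives $(1,2)$ entry $-c_{i-2}$ and $(2,1)$ entry $c_{i-1}$, whereas the statement has $-c_{i-1}$ and $c_{i-2}$. So the main obstacle is indexing bookkeeping rather than any conceptual step. We will recheck carefully which factors are removed, namely $A_{i+n-1}=A_{i-1}$ and then $A_{i+n-2}=A_{i-2}$, and in what order. We will also test the formula on the $n=4$ and $n=3$ examples before finalizing. If the discrepancy persists, \eqref{eqn:mat_n-3} should read with $c_{i-1}$ and $c_{i-2}$ interchanged in the off-diagonal entries.
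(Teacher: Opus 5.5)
Your proof is correct and follows the paper's route. The paper also proves \eqref{eqn:mat_n-1} by ear-removal induction: its Appendix argument for the $q$-version rests on $M_q(a,1,b)=qM_q(a-1,b-1)$, whose $q=1$ case is your local identity. It then obtains \eqref{eqn:mat_n-2} and \eqref{eqn:mat_n-3} by right-multiplying by $A_{i-1}^{-1}$ and then $A_{i-2}^{-1}$, exactly as you do. The only difference is the cyclic start: the paper picks an ear tip strictly inside the window $[i,i+n-1]$, whereas you rotate the window by conjugation. Both work, since $-I$ is central.

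The discrepancy you anticipate is real, and the error is on the paper's side. One has
$\begin{pmatrix}0&-1\\1&-c_{i-1}\end{pmatrix}\begin{pmatrix}0&1\\-1&c_{i-2}\end{pmatrix}=\begin{pmatrix}1&-c_{i-2}\\c_{i-1}&1-c_{i-1}c_{i-2}\end{pmatrix}$,
so \eqref{eqn:mat_n-3} as printed, and the last line of the paper's own multiplication, has $c_{i-1}$ and $c_{i-2}$ interchanged off the diagonal.

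\begin{itemize}
\item A concrete check: for $n=4$, $(c_0,c_1,c_2,c_3)=(1,2,1,2)$ and $i=0$, one gets $M(1,2)=\begin{pmatrix}1&-1\\2&-1\end{pmatrix}$. This matches your formula, not the printed one.
\item The paper's own $q$-analogue \eqref{eqn:q-mat_n-3} reduces at $q=1$ to your version.
\item Case 2 of the proof of Proposition \ref{prop:3desect_cont_frac_CCF} uses $\rho_{i,i+n-2}=1/c_{i-1}$, which is what your version gives.
\end{itemize}

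So you can finalize with the corrected entries; no further rechecking is needed.
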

\noindent
\begin{proof} 
The equation $(\ref{eqn:q-mat_n-1})$ in Lemma \ref{lemm:n-1_q} is 
a $q$-deformation of the equation $(\ref{eqn:mat_n-1})$. 
We prove the equation $(\ref{eqn:q-mat_n-1})$ in Section \ref{sect:proof_lem}. 
Taking the limit as $q\to1$ in this proof, we recover a proof of the equation $(\ref{eqn:mat_n-1})$. 
The equations $(\ref{eqn:mat_n-2})$ and $(\ref{eqn:mat_n-3})$ follow from 
straightforward calculations:  
\begin{align*}
&M(c_{i}, c_{i+1},\dots, c_{i+n-2})\\
&=M(c_{i+1}, c_{i+2},\dots, c_{i+n-2},c_{i+n-1})
\begin{pmatrix}
c_{i-1}&-1\\
1&0
\end{pmatrix}^{-1}=-\begin{pmatrix}
0&1\\
-1&c_{i-1}
\end{pmatrix}=\begin{pmatrix}
0&-1\\
1&-c_{i-1}
\end{pmatrix}\,,\\
&M(c_{i},c_{i+1},\dots,c_{i+n-3})\\
&=M(c_i,c_{i+1},\dots,c_{i+n-3},c_{i+n-2})
\begin{pmatrix}
c_{i-2}&-1\\
1&0
\end{pmatrix}^{-1}=\begin{pmatrix}
0&-1\\
1&-c_{i-1}
\end{pmatrix}
\begin{pmatrix}
0&1\\
-1&c_{i-2}
\end{pmatrix}\\
&\hspace{7.5cm}=
\begin{pmatrix}
1&-c_{i-1}\\
c_{i-2}&-c_{i-2}c_{i-1}+1
\end{pmatrix}\,.
\end{align*}
\end{proof}
\begin{prop}[cf.~{\cite{coxeter1971frieze,morier2019farey,series1985modular}}]
\label{prop:3desect_cont_frac_CCF}
Let $(\T,i)$ be a triangulation of $n$-gon. 
Let $\quid(\T)=(c_u)_{u\in\Z}$, 
$\Farey(\T,i)^{\per}=(\rho_{i,u})_{u\in\Z}$, and $\CCF(\T)=\{\sigma_{k,\ell}\}_{(k,\ell)\in D_{n+1}}$ 
be the corresponding quiddity, anti-periodic extension of the Farey labeling, and CCF, respectively. 
Fix any $j\in\Z$ with $i\leq j\leq i+n-2$. 
If the unimodular matrix $M(c_{i},c_{i+1},\dots,c_{j})$ is written in the form 
$M(c_{i},c_{i+1},\dots,c_{j})=
\begin{sumipmatrix}
{\mathcal R}&-{\mathcal R}'\\
{\mathcal S}&-{\mathcal S}'
\end{sumipmatrix}$, then the following hold:  
\begin{enumerate}[{(}1{)}]
\item We have $\rho_{i,j}=\dfrac{{\mathcal R}'}{{\mathcal S}'}\;,\;\rho_{i,j+1}=\dfrac{\mathcal R}{\mathcal S}$. 
Except in the cases where ${\mathcal S}'=0$ if $j=i$ and ${\mathcal R}=0$ if $j=i+n-2$, 
${\mathcal R},{\mathcal S},{\mathcal R}'$, and ${\mathcal S}'$ are all positive.  
\item The entries of the CCF are related to those of the unimodular matrix as follows: 
\begin{align*}
\begin{sumipmatrix} 
\sigma_{i-1,j+1}& \sigma_{i-1,j}\\
\sigma_{i,j+1}& \sigma_{i,j}\\
\end{sumipmatrix}
=\begin{sumipmatrix}
{\mathcal R}&{\mathcal R}'\\
{\mathcal S}&{\mathcal S}'
\end{sumipmatrix}\,.
\end{align*}
\item We have $[[c_i,c_{i+1},\dots,c_{j-1}]]=\dfrac{{\mathcal R}'}{{\mathcal S}'}\;,\;[[c_i,c_{i+1},\dots,c_{j-1},c_j]]=\dfrac{{\mathcal R}}{{\mathcal S}}$, 
where, if $j=i$, then 
\\
$[[c_i,c_{i+1},\dots,c_{j-1}]]=[[\quad]]=\dfrac{1}{0}$. 
\end{enumerate}
\end{prop}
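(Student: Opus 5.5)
We argue by induction on $j$, from $j=i$ up to $j=i+n-2$, and prove (1), (2), (3) together. The common mechanism is that right multiplication by $\begin{sumipmatrix}c_{j+1}&-1\\1&0\end{sumipmatrix}$ sends the matrix $\begin{sumipmatrix}{\mathcal R}&-{\mathcal R}'\\{\mathcal S}&-{\mathcal S}'\end{sumipmatrix}$ to $\begin{sumipmatrix}c_{j+1}{\mathcal R}-{\mathcal R}'&-{\mathcal R}\\c_{j+1}{\mathcal S}-{\mathcal S}'&-{\mathcal S}\end{sumipmatrix}$. So the new pair of columns is obtained from the old one by the three-term recurrence $X_{j+2}=c_{j+1}X_{j+1}-X_j$. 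The plan is to show that the Farey data, the frieze entries and the continued fractions all obey this same recurrence with matching initial data.

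\textbf{Base case $j=i$.} We have $M(c_i)=\begin{sumipmatrix}c_i&-1\\1&0\end{sumipmatrix}$, so ${\mathcal R}=c_i$, ${\mathcal S}=1$, ${\mathcal R}'=1$, ${\mathcal S}'=0$.
\begin{itemize}
\item By \eqref{eqn:CCF0}, $\sigma_{i-1,i+1}=c_i$, $\sigma_{i,i+1}=1$, $\sigma_{i-1,i}=1$ and $\sigma_{i,i}=0$.
\item We have $\rho_{i,i}=1/0$, and $[[\ ]]=1/0$, $[[c_i]]=c_i/1$.
\item It remains to check $\rho_{i,i+1}=c_i/1$. Every neighbour $v_u$ of $v_i$ has $s_{i,u}=1$: these labels arise by repeatedly Farey-adding $1/0$, counterclockwise around $v_i$ starting from $0/1$. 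They are therefore $1/1,2/1,\dots,(c_i)/1$, and the last one is at $v_{i+1}$.
\end{itemize}

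\textbf{Inductive step for (3).} This is pure algebra. Write $p_j/q_j$ for the reduced numerator and denominator defined through the matrix product. The identity $M(c_i,\dots,c_j)\cdot(x,1)^T\propto$ (Möbius action) gives $[[c_i,\dots,c_j,x]]$ as the image of $x$ under $M(c_i,\dots,c_j)$. Setting $x=c_{j+1}$ gives the claim, and the case of a vanishing inner fraction is handled by the convention $1/0$.

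\textbf{Inductive step for (2).} Here we need $\sigma_{i-1,j+2}=c_{j+1}\sigma_{i-1,j+1}-\sigma_{i-1,j}$, and similarly with $i-1$ replaced by $i$. This is the standard linear recurrence along a diagonal of a frieze. We would derive it from the unimodular rule \eqref{eqn:unimodular} by induction on the row, in the usual Coxeter way. One shows that $(\sigma_{k,\ell-1}+\sigma_{k,\ell+1})/\sigma_{k,\ell}$ is independent of $k$ and equals $c_\ell$, using the $2\times2$ determinants. This step needs the entries in the relevant range to be nonzero, i.e.\ $\sigma_{k,\ell}>0$ for $1\le \ell-k\le n-1$.

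\textbf{Inductive step for (1), the main obstacle.} We must show that the Farey labels satisfy $r_{i,j+2}=c_{j+1}r_{i,j+1}-r_{i,j}$, and the same for $s$. Let $v_{j+1}$ have neighbours $v_j=w_0,w_1,\dots,w_{c_{j+1}}=v_{j+2}$ in counterclockwise order. Exactly one of the cells around $v_{j+1}$ is the lowest-level cell ${\mathcal C}_\mu$, as in Definition~\ref{def:desc_deg}.
\begin{itemize}
\item Moving away from ${\mathcal C}_\mu$ on either side, each new label is the Farey sum of the previous neighbour label and $\rho_{i,j+1}$.
\item Moving inward, the labels therefore form arithmetic progressions in $\rho_{i,j+1}$: $w_t=w_{t-1}\pm\rho_{i,j+1}$, with signs changing at ${\mathcal C}_\mu$, where $\rho(w_\mu)\oplus\rho(w_{\mu+1})=\rho_{i,j+1}$.
\item Summing the progressions gives $\rho(v_j)+\rho(v_{j+2})=c_{j+1}\rho_{i,j+1}$ at the level of numerator/denominator vectors.
\end{itemize}

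\textbf{Remaining points.}
\begin{itemize}
\item The case $v_{j+1}$ adjacent to the base cell has to be handled separately.
\item Wrap-around with the anti-periodic extension is used only at $j=i+n-2$. There we invoke \eqref{eqn:mat_n-2}, which gives ${\mathcal R}=0$ and ${\mathcal S}=1$, so $\rho_{i,i+n-1}=0/1$.
\item Positivity of ${\mathcal R},{\mathcal S},{\mathcal R}',{\mathcal S}'$ follows because Farey labels strictly inside the range have positive numerators and denominators, as Farey sums of $1/0$ and $0/1$.
\item This positivity also supplies the nonvanishing needed in step (2), closing the simultaneous induction.
\end{itemize}
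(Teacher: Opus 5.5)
Your proof is correct. For (1) and (3) it is the paper's own argument:

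\begin{itemize}
\item In (1), the vector identity $\rho(v_j)+\rho(v_{j+2})=c_{j+1}\rho_{i,j+1}$ comes from the two arithmetic progressions on either side of the lowest-level cell around $v_{j+1}$. This is exactly Case~1 of the paper's proof.
\item In (3), the paper prepends $M(c_i)$ and you append $M(c_{j+1})$ via the M\"obius action; this is the same matrix algebra.
\end{itemize}

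The genuine difference is in (2). The paper proves no recurrence for the frieze there. It uses $M(c_i,\dots,c_j)=M(c_i)M(c_{i+1},\dots,c_j)=M(c_i,\dots,c_{j-1})M(c_j)$ and reads off $\mathcal R',\mathcal S',\mathcal S$ from the induction hypothesis for the base edges $v_{i-1}v_i$ and $v_iv_{i+1}$. It then compares $\det M=1$ with the unimodular rule to get $(\sigma_{i-1,j+1}-\mathcal R)\sigma_{i,j}=0$. This needs only $\sigma_{i,j}\neq0$, and it also shows that the newly defined entry $\sigma_{i-1,j+1}$ is the integer $\mathcal R$.

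Your route is Coxeter's linearization $\sigma_{k,\ell+1}=c_\ell\sigma_{k,\ell}-\sigma_{k,\ell-1}$, which is the $q=1$ case of the recurrence the paper only obtains after Proposition~\ref{prop:q-CCF_n-1}.
\begin{itemize}
\item What it buys: the whole proposition becomes one three-term recurrence with the same initial data, shared by matrix columns, Farey vectors and frieze diagonals.
\item What it costs: it needs $\sigma_{k',j+1}\neq0$ for all $i\le k'<j$. These are the $\mathcal R$-entries of $M(c_{k'+1},\dots,c_j)$, so they belong to the base edges $v_{k'}v_{k'+1}$.
\item Required fix: the induction must run on $m=j-i$ over all base indices at once, as the paper's does. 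Your first sentence instead says induction on $j$ with $i$ fixed, which does not supply these entries.
\end{itemize}

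Two minor remarks:
\begin{itemize}
\item The case where the lowest-level cell around $v_{j+1}$ is the base cell needs no separate treatment. The labels $1/0$ and $0/1$ of $v_i$ and $v_{i+n-1}$ still add up to $\rho_{i,j+1}=1/1$, so your generic argument already covers it.
\item Handling $j=i+n-2$ with \eqref{eqn:mat_n-2} alone is cleaner than the paper's Case~2, which also invokes \eqref{eqn:mat_n-3}. In that formula the entries $-c_{i-1}$ and $c_{i-2}$ should read $-c_{i-2}$ and $c_{i-1}$.
\end{itemize}
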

\begin{proof} We use the abbreviation $\rho_{i,j}=\rho_j$. 
We will prove (1), (2), and (3) by induction on $m=j-i$. 
\par
For $m=0$ and $m=1$, the statements follow by direct computation
from the definitions of the Farey labeling, CCF, and continued fractions.
Indeed,
\[
M(c_i)
=
\begin{pmatrix}
c_i & -1\\
1 & 0
\end{pmatrix},
\qquad
M(c_i,c_{i+1})
=
\begin{pmatrix}
c_ic_{i+1}-1 & -c_i\\
c_{i+1} & -1
\end{pmatrix}\,.
\]
The corresponding Farey labels are
\[
\rho_i=\frac10,\quad
\rho_{i+1}=\frac{c_i}{1},\quad
\rho_{i+2}=\frac{c_ic_{i+1}-1}{c_i}.
\]
The positivity statements are immediate except when
$c_ic_{i+1}-1=0$, equivalently $n=3$. This proves (1).  
\par
From the definition of the CCF, we have 
\[
\sigma_{i,i}=0,\quad \sigma_{i-1,i}=1,\quad \sigma_{i,i+1}=1,\quad \sigma_{i-1,i+1}=c_i,\quad\sigma_{i,i+2}=c_{i+1}\,. 
\]
The unimodular rule~\eqref{eqn:unimodular} gives 
$\sigma_{i-1,i+2}=c_ic_{i+1}-1$. This proves (2). 
\par
Finally, the corresponding continued fractions are 
\[
[[\;\;]]=\dfrac{1}{0},\quad [[c_i]]=\dfrac{c_i}{1},\quad [[c_i,c_{i+1}]]=\dfrac{c_ic_{i+1}-1}{c_{i+1}}\,.
\]
This proves (3). 
\par
Next, assume that the results (1), (2), and (3) hold for all $j\leq i+m$ with $m\geq 1$. 
We show that they also hold for $j=i+m+1$. 
We prove the result (1) separately in the two cases $j\leq i+n-3$ and $j=i+n-2$. 
\par
\noindent
{\bf Case 1}. Suppose $j\leq i+n-3$. By the induction hypothesis, if  
\begin{align}\label{eq:prod_mat_1}
&M(c_i,\dots,c_{j-1})
=\begin{pmatrix}
c_i&-1\\
1&0
\end{pmatrix}
\cdots
\begin{pmatrix}
c_{j-1}&-1\\
1&0
\end{pmatrix}
=
\begin{pmatrix}
R&-R'\\
S&-S'
\end{pmatrix}\,,
\end{align}
then $\rho_{j-1}=\dfrac{R'}{S'}$, $\rho_j=\dfrac{R}{S}$, and $R,S,R',S'$ are positive. 
Therefore, we have 
\begin{align*}
&M(c_i,\dots,c_j)=
M(c_i,\dots,c_{j-1})M(c_j)
=\begin{pmatrix}
R&-R'\\
S&-S'
\end{pmatrix}
\begin{pmatrix}
c_j&-1\\
1&0
\end{pmatrix}
=
\begin{pmatrix}
c_jR-R'&-R\\
c_jS-S'&-S
\end{pmatrix}\,.
\end{align*}
Since $\rho_j=\dfrac{R}{S}$, the first equation of (1) follows. Moreover, 
both $R$ and $S$ are positive.  
To prove the second equation of (1), it suffices to show that 
\begin{align}\label{eqn:CCF_Farey_k+1}
\rho_{j+1}=\dfrac{c_jR-R'}{c_jS-S'}\,.
\end{align}
Here, since $j=i+m+1\le i+n-3$ and $1\le m$, we have $i+2\le j\le i+n-3$ and $n\ge5$. 
Hence the vertex $v_{j+1}$ is distinct from the endpoints
$v_{i-1},v_i$ of the base edge. By the Farey labeling, both the numerator and denominator of
$\rho_{j+1}$ are positive. 
\par
Let $\cell(v_j)=\{{\mathcal C}''_0,\dots,{\mathcal C}''_{c_j-1}\}$ 
be the counterclockwise ordering of the cells adjacent to $v_j$. 
Write the cell of lowest level as ${\mathcal C}''_\mu=v_{z_{\mu+1}}v_{z_\mu}v_j$, 
where $\mu=\mu_{i,j}$ is the descendant degree of $v_j$. 
%Then, $v_{z_\mu}v_j$, $v_{z_{\mu+1}}v_{z_\mu}$, and $v_jv_{z_{\mu+1}}$ are the base side of 
%${\mathcal C}''_{\mu-1}$, ${\mathcal C}''_{\mu}$, and ${\mathcal C}''_{\mu+1}$, respectively, 
%where, if $\mu=0$ or $\mu=c_j-1$, then 
%${\mathcal C}''_{\mu-1}$ or ${\mathcal C}''_{\mu+1}$ do not exist, respectively. 
%By the Farey labeling, 
Let the endpoints of the base side
$v_{z_{\mu+1}}v_{z_\mu}$ be labeled by
$\rho_{z_\mu}=\dfrac{a}{b}$ and $\rho_{z_{\mu+1}}=\dfrac{a'}{b'}$. 
Since 
\[
\dfrac{R}{S}=\rho_j=\rho_{z_{\mu+1}}\oplus \rho_{z_\mu}
=\dfrac{a'+a}{b'+b}\,,
\]
we obtain $\rho_{z_\mu}=\dfrac{R-a'}{S-b'}$. 
Successive applications of the Farey sum along the cells adjacent to $v_j$ yield 
\[
\rho_{j-1}=\frac{(\mu+1)R-a'}{(\mu+1)S-b'}\,.
\]
Since $\rho_{j-1}=\dfrac{R'}{S'}$, it follows that $\rho_{z_{\mu+1}}=
\dfrac{(\mu+1)R-R'}{(\mu+1)S-S'}$. 
Applying the same argument in the opposite direction, we obtain
\[
\rho_{j+1}
=\dfrac{c_jR-R'}{c_jS-S'}.
\]
Therefore, the equation $(\ref{eqn:CCF_Farey_k+1})$ follows. 
\par
\noindent
{\bf Case 2}. Suppose $j=i+n-2$. 
Lemma \ref{lemm:n-1} gives
$M(c_i,c_{i+1},\dots,c_{i+n-3})
=\begin{sumipmatrix}
1&-c_{i-1}\\
c_{i-2}&-(c_{i-2}c_{i-1}-1)
\end{sumipmatrix}$ and $M(c_i,c_{i+1},\dots,c_{i+n-2})=
\begin{sumipmatrix}
0&-1\\
1&-c_{i-1}
\end{sumipmatrix}$. 
By the induction hypothesis, $\rho_{i+n-2}=\dfrac{1}{c_{i-1}}$, 
while the initial values of the Farey labeling give $\rho_{i+n-1}=\dfrac{0}{1}$. 
Therefore, the result (1) follows. 
\par
Next, we prove the result (2) and (3). 
Let $M(c_{i+1},\dots,c_j)
=
\begin{sumipmatrix}
r&-r'\\
s&-s'
\end{sumipmatrix}$ and 
$M(c_i,\dots,c_{j-1})=\begin{sumipmatrix}
R&-R'\\
S&-S'
\end{sumipmatrix}$, where all entries are positive by the result (1). 
By the induction hypothesis for (2) and (3), we have
\begin{align}
\begin{pmatrix}
\sigma_{i,j+1}&\sigma_{i,j}\\
\sigma_{i+1,j+1}&\sigma_{i+1,j}
\end{pmatrix}
=\begin{pmatrix}
r&r'\\
s&s'
\end{pmatrix}\,,\;
\begin{pmatrix}
\sigma_{i-1,j}&\sigma_{i-1,j-1}\\
\sigma_{i,j}&\sigma_{i,j-1}
\end{pmatrix}
=\begin{pmatrix}
R&R'\\
S&S'
\end{pmatrix}
\label{eqn:shucho2}
\end{align}
and 
\begin{align}
&[[c_{i+1},\dots,c_{j-1}]]=\dfrac{r'}{s'}\,,\; 
[[c_{i+1},\dots,c_j]]=\dfrac{r}{s}\,,\;
[[c_i,\dots,c_{j-2}]]=\dfrac{R'}{S'}\,,
\;[[c_i,\dots,c_{j-1}]]=\dfrac{R}{S}\,.\label{eqn:shucho3}
\end{align}
Then, we have 
\begin{align*}
&M(c_i,\dots,c_j)=M(c_i)M(c_{i+1},\dots,c_j)
=
\begin{pmatrix}
c_ir-s&-(c_ir'-s')\\
r&-r'
\end{pmatrix}
\end{align*}
and 
\begin{align*}
&M(c_i,\dots,c_j)
=M(c_i,\dots,c_{j-1})M(c_j)
=\begin{pmatrix}
c_jR-R'&-R\\
c_jS-S'&-S
\end{pmatrix}\,.
\end{align*}
Let $M(c_{i},\dots,c_{j})=
\begin{sumipmatrix}
{\mathcal R}&-{\mathcal R}'\\
{\mathcal S}&-{\mathcal S}'
\end{sumipmatrix}$. 
From $(\ref{eqn:shucho2})$, we obtain 
\begin{align*}
{\mathcal R}'=R=\sigma_{i-1,j}\,,\;{\mathcal S}'
=S=r'=\sigma_{i,j}\,,\;{\mathcal S}=r=\sigma_{i,j+1}\,.
\end{align*}
Hence $M(c_i,\dots,c_j)=
\begin{sumipmatrix}
{\mathcal R}&-\sigma_{i-1,j}\\
\sigma_{i,j+1}&-\sigma_{i,j}
\end{sumipmatrix}$. 
Since each matrix $\begin{sumipmatrix}
c_u&-1\\
1&0
\end{sumipmatrix}$ in the product defining $M(c_i,\dots,c_j)$ has determinant $1$, we have 
\[
\det M(c_i,\dots,c_j)
=\sigma_{i-1,j}\sigma_{i,j+1}
-{\mathcal R}\sigma_{i,j}=1\,.
\]
Combining this with the unimodular rule of CCFs, 
$\sigma_{i-1,j}\sigma_{i,j+1}
-\sigma_{i-1,j+1}\sigma_{i,j}
=1$, we obtain
\[
(\sigma_{i-1,j+1}-{\mathcal R})\sigma_{i,j}=0\,. 
\]
Since $\sigma_{i,j}\neq0$, it follows that
${\mathcal R}=\sigma_{i-1,j+1}$. Therefore, the result~(2) follows.
\par
By (\ref{eqn:shucho3}), we have 
\begin{align*}
[[c_i,\dots,c_{j-1}]]
&=\dfrac{R}{S}\,,\\
[[c_i,\dots,c_j]]&=
c_i-\dfrac{1}{[[c_{i+1},\dots,c_j]]}=
c_i-\dfrac{s}{r}=
\dfrac{c_ir-s}{r}\,.
\end{align*}
Thus, the result (3) follows. 
\end{proof}

The following Proposition \ref{prop:CCF_n-1} extends 
Proposition \ref{prop:3desect_cont_frac_CCF} (1), (2), and (3) to the case $j=i+n-1$. 
\begin{prop}\label{prop:CCF_n-1} 
The equations in Proposition \ref{prop:3desect_cont_frac_CCF} (1), (2), and (3) remain valid for 
$j=i+n-1$. 
\end{prop}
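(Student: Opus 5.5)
For $j=i+n-1$ the plan is to check each of the three statements directly. The matrix is known explicitly, and the relevant Farey labels, frieze entries and continued fractions can all be computed by hand. By equation $(\ref{eqn:mat_n-1})$ of Lemma \ref{lemm:n-1}, we have $M(c_i,\dots,c_{i+n-1})=-I$. In the notation of Proposition \ref{prop:3desect_cont_frac_CCF} this means
\[
{\mathcal R}=-1,\quad {\mathcal R}'=0,\quad {\mathcal S}=0,\quad {\mathcal S}'=-1 .
\]
So the claims to verify are:
\begin{enumerate}[{(}1{)}]
\item $\rho_{i,i+n-1}=\dfrac{0}{-1}$ and $\rho_{i,i+n}=\dfrac{-1}{0}$;
\item $\sigma_{i-1,i+n}=-1$, $\sigma_{i-1,i+n-1}=0$, $\sigma_{i,i+n}=0$ and $\sigma_{i,i+n-1}=-1$;
\item $[[c_i,\dots,c_{i+n-2}]]=\dfrac{0}{-1}$ and $[[c_i,\dots,c_{i+n-1}]]=\dfrac{-1}{0}$.
\end{enumerate}
Since the positivity statement in (1) plainly fails here, we read (1) as an equality of labels, that is, numerator and denominator taken as given by the matrix entries.

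For (1): the label $\rho_{i,i+n-1}=0/1$ is $0$ as an element of $\PP^1(\Q)$. The anti-periodic extension $(\ref{eqn:Farey_graph_infty})$ with $u=i+n=i+1\cdot n$ gives $\rho_{i,i+n}=\dfrac{-r_{i,i}}{-s_{i,i}}=\dfrac{-1}{0}$, which matches ${\mathcal R}/{\mathcal S}$ exactly as a pair. For $\rho_{i,i+n-1}$ we match the value $0$. If an exact matching of pairs is desired, we note that the sign convention is forced by the $-I$, and remark on it.

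For (2): Theorem \ref{thm:CCF}(1) says $\CCF(\T)$ is positive of width $n-1$. Hence row $n$ is all $0$ and row $n+1$ is all $-1$. The entries $\sigma_{i-1,i+n-1}$ and $\sigma_{i,i+n}$ have $\ell-k=n$, so they vanish. The entry $\sigma_{i-1,i+n}$ has $\ell-k=n+1$, so it equals $-1$. The entry $\sigma_{i,i+n-1}$ has $\ell-k=n-1$, so it lies in the all-$1$ row. This is where a sign discrepancy with ${\mathcal S}'=-1$ appears. Two fixes are possible: use the anti-periodicity / glide symmetry of the frieze, or (more likely) interpret (2) via the sign convention of the extension. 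To pin this down, I would redo the computation using $(\ref{eqn:mat_n-2})$. From the case $j=i+n-2$ we get $\sigma_{i-1,i+n-1}=0$ and $\sigma_{i,i+n-1}=1$. Multiplying by $M(c_{i+n-1})=M(c_{i-1})$ gives the first column $(c_{i-1}\cdot 0-1,\ c_{i-1}-c_{i-1})=(-1,0)$ and the second column $(0,-1)$. Then I would check each entry against the unimodular rule $(\ref{eqn:unimodular})$ at $(k,\ell)=(i-1,i+n)$:
\[
\sigma_{i-1,i+n-1}\sigma_{i,i+n}-\sigma_{i,i+n-1}\sigma_{i-1,i+n}=0-1\cdot(-1)=1 .
\]
This uses the same argument as the inductive step of Proposition \ref{prop:3desect_cont_frac_CCF}(2). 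The new first column is forced by the determinant identity together with the frieze rows, and $\sigma_{i,i+n-1}$ is then understood with the sign carried by the matrix.

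For (3): the first equality is the case $j=i+n-2$ of Proposition \ref{prop:3desect_cont_frac_CCF}(3), which gives $[[c_i,\dots,c_{i+n-2}]]=0/1$. For the second, expand as in that proof:
\[
[[c_i,\dots,c_{i+n-1}]]=c_i-\frac{1}{[[c_{i+1},\dots,c_{i+n-1}]]}.
\]
The inner continued fraction is $0$, by the $j=i+n-2$ case with base index $i+1$. The value is therefore $\infty=1/0=-1/0$.

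I expect the main obstacle to be the sign bookkeeping: reconciling the signs of the entries $-1$ coming from $-I$ with the positive frieze entries and with the $0/1$ labels. I would resolve it by stating the equalities in $\PP^1(\Q)$ for (1) and (3), and for (2) by deriving the entries from the frieze rows $n$ and $n+1$ through the unimodular rule, as above.
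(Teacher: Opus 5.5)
\textbf{There is a genuine error, and it comes from a sign slip in your very first step.} With the convention $M(c_i,\dots,c_j)=\begin{pmatrix}{\mathcal R}&-{\mathcal R}'\\ {\mathcal S}&-{\mathcal S}'\end{pmatrix}$, the identity $M(c_i,\dots,c_{i+n-1})=-I$ gives
\[
{\mathcal R}=-1,\quad {\mathcal R}'=0,\quad {\mathcal S}=0,\quad {\mathcal S}'=+1 .
\]
The $(2,2)$ entry is $-{\mathcal S}'=-1$, and you dropped that minus sign. Your own second column $(0,-1)$ is $(-{\mathcal R}',-{\mathcal S}')$, which confirms ${\mathcal S}'=1$.

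The ``sign bookkeeping'' problem you anticipate is entirely an artifact of this slip, and your proposed resolution of (2) is not a valid step. Declaring that $\sigma_{i,i+n-1}$ is \emph{understood with the sign carried by the matrix} amounts to asserting $\sigma_{i,i+n-1}=-1$. That is false: $\sigma_{i,i+n-1}$ lies in row $n-1$ of a positive frieze of width $n-1$, so it equals $1$. Glide symmetry cannot change this. The anti-periodic convention cannot either, since it concerns the Farey labels, not the frieze. Your own unimodular check $0-1\cdot(-1)=1$ in fact uses $\sigma_{i,i+n-1}=+1$, contradicting the target you set. So, as written, (2) is not proved; it is replaced by a reinterpretation of the statement. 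The same slip produces your target $\rho_{i,i+n-1}=\frac{0}{-1}$ and your retreat to equalities in $\PP^1(\Q)$ in (1) and (3).

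\textbf{With ${\mathcal S}'=1$, all three statements hold exactly as numerator/denominator pairs, and your ingredients finish the proof.}
\begin{itemize}
\item For (1): $\rho_{i,i+n-1}=\frac{0}{1}=\frac{{\mathcal R}'}{{\mathcal S}'}$ by definition, and $\rho_{i,i+n}=\frac{-1}{0}=\frac{{\mathcal R}}{{\mathcal S}}$ by the anti-periodic extension.
\item For (2): $\sigma_{i-1,i+n-1}=\sigma_{i,i+n}=0$, $\sigma_{i,i+n-1}=1$, and $\sigma_{i-1,i+n}=-1$.
\item For (3): $[[c_i,\dots,c_{i+n-1}]]=c_i-\frac{1}{0/1}=\frac{c_i\cdot 0-1}{0}=\frac{-1}{0}$, and $[[c_i,\dots,c_{i+n-2}]]=\frac{0}{1}$ from the case $j=i+n-2$.
\end{itemize}

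This corrected argument is essentially the paper's. The one difference is in (2). You read the entries off the frieze rows via Theorem~\ref{thm:CCF}(1). The paper instead obtains $\sigma_{i-1,i+n-1}=0$, $\sigma_{i,i+n-1}=1$ and $\sigma_{i,i+n}=0$ from \eqref{eqn:mat_n-2} and Proposition~\ref{prop:3desect_cont_frac_CCF}(2), applied with base indices $i$ and $i+1$. It then gets $\sigma_{i-1,i+n}=-1$ from the unimodular rule. This avoids relying on Conway--Coxeter's theorem, which the paper presents as equivalent to Proposition~\ref{prop:3desect_cont_frac_CCF}(2).

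Either route is acceptable. If you take the matrix route, note that $\sigma_{i,i+n}=0$ needs the base-$(i+1)$ instance of \eqref{eqn:mat_n-2}, which your sketch omits.
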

\begin{proof} From the equation $(\ref{eqn:mat_n-1})$ in Lemma \ref{lemm:n-1}, we have 
$M(c_i,c_{i+1},\dots,c_{i+n-1})
=\begin{sumipmatrix}
-1&0\\
0&-1
\end{sumipmatrix}$. 
By the definition $(\ref{eqn:Farey_graph_infty})$ 
of the anti-periodic extension $\Farey(\T,i)^{\per}=(\rho_u)_{u\in\Z}$, 
we have $\rho_{i,i+n-1}=\dfrac{0}{1},\;\rho_{i,i+n}=\dfrac{-1}{-0}=\dfrac{-1}{0}$. 
Hence, the equations in Proposition \ref{prop:3desect_cont_frac_CCF} (1) also hold 
for $j=i+n-1$. 
Furthermore, from the equation $(\ref{eqn:mat_n-2})$ in Lemma \ref{lemm:n-1}, we have 
\begin{align*}
&M(c_i,c_{i+1},\dots,c_{i+n-2})
=\begin{pmatrix}
0&-1\\
1&-c_{i-1}
\end{pmatrix}\,,\; 
M(c_{i+1},c_{i+2},\dots,c_{i+n-1})
=\begin{pmatrix}
0&-1\\
1&-c_i
\end{pmatrix}\,.
\end{align*}
From Proposition \ref{prop:3desect_cont_frac_CCF} (2) in the cases 
$j=i+n-2$ for the triangulation $(\T,i)$ or $j=i+n-1$ for the triangulation $(\T,i+1)$, it follows that
\begin{align*}
\sigma_{i-1,i+n-1}=0\,,\quad 
\sigma_{i,i+n-1}=1\,,\quad 
\sigma_{i,i+n}=0\,.
%\label{eqn:mat_n-2_element}
\end{align*}
By the unimodular rule of CCFs and these equations, 
% $(\ref{eqn:mat_n-2_element})$
%\begin{align*}
%\sigma_{i-1,i+n-1}\sigma_{i,i+n}-\sigma_{i,i+n-1}\sigma_{i-1,i+n}&=1
%\end{align*}
$\sigma_{i-1,i+n}=-1$ holds. Hence, 
the equations in Proposition \ref{prop:3desect_cont_frac_CCF} (2) also hold 
for $j=i+n-1$. 
\par
From Proposition \ref{prop:3desect_cont_frac_CCF} in the case 
$j=i+n-1$ for the triangulation $(\T,i+1)$, we have 
\\
$[[c_{i+1},\dots,c_{i+n-1}]]=\dfrac{\sigma_{i,i+n}}{\sigma_{i+1,i+n}}=\dfrac{0}{1}$. 
Here, we compute the continued fraction $[[c_i,c_{i+1},\dots,c_{i+n-1}]]$ as follows: 
\begin{align*}
[[c_i,c_{i+1},\dots,c_{i+n-1}]]=&c_i-\dfrac{1}{[[c_{i+1},\dots,c_{i+n-1}]]}
=c_i-\dfrac{1}{\quad\dfrac{0}{1}\quad}
=c_i-\dfrac{1}{0}
=\dfrac{c_i\cdot0-1}{0}
=\dfrac{-1}{0}\,.
\end{align*}
Since $\sigma_{i,i+n}=0$ and $\sigma_{i-1,i+n}=-1$, we have 
$[[c_i,c_{i+1},\dots,c_{i+n-1}]]=\dfrac{\sigma_{i-1,i+n}}{\sigma_{i,i+n}}$. 
Therefore, the equations in Proposition \ref{prop:3desect_cont_frac_CCF} (3) hold for $j=i+n-1$. 
\end{proof}
\subsection{Diagonal counts in a triangulation and occurrences of \texorpdfstring{$1$}{\it 1} 
in the associated CCF}
\begin{lemm}\label{lemm:count1_trianglation} 
Let $\CCF(\T)=\{\sigma_{(i,j)}\}_{(i,j)\in D_{n+1}}$ be 
the CCF of a triangulation $\T$ of an $n$-gon. 
For any $(i,j)\in D_{n+1}$, we have 
\begin{align*}
&\nu^{\tri}_{i,j}=
\left\{
\begin{array}{ll}
\delta(V[i,j])&\text{if $i\leq j\leq i+n-2$},\\
n-2&\text{if $j=i+n-1$},\\
n-2+c_i&\text{if $j=i+n$},\\
n-2+c_i+c_{i+1}&\text{if $j=i+n+1$},
\end{array}
\right.\\
&\nu^{\md}_{i,j}=
\left\{
\begin{array}{ll}
\delta(\{v_i\},V[i+2,j])&\text{if $i\leq j\leq i+n-2$},\\
c_i&\text{if $i+n-1\leq j\leq i+n+1$},
\end{array}
\right.\\
&\nu^{\ad}_{i,j}=
\left\{
\begin{array}{ll}
\delta(V[i,j-2],\{v_j\})&\text{if $i\leq j\leq i+n-2$},\\
c_j&\text{if $i+n-1\leq j\leq i+n+1$}.
\end{array}
\right.
\intertext{For any $(i,j)\in D_{n+1}$ with $i\leq u,u+2\leq j$, we have}
&\nu^{\rec}_{i,u,j}-\delta(V[i,u],V[u+2,j])
=\left\{
\begin{array}{ll}
0&\text{if $i\leq j\leq i+n-2$},\\
1&\text{if $j=i+n-1$},\\
2&\text{if $j=i+n,i+1\leq u\leq i+n-3$},\\
1&\text{if $j=i+n,u=i,i+n-2$},\\
3&\text{if $j=i+n+1,i+2\leq u \leq i+n-3$},\\
2&\text{if $j=i+n+1,u=i+1,\; i+n-2$},\\
1&\text{if $j=i+n+1,u=i,\; i+n-1$}. 
\end{array}
\right.
\end{align*}
\end{lemm}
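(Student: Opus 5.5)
The plan is to reduce everything to one combinatorial characterization of the entries equal to $1$, and then count region by region. The characterization is: for $(a,b)\in D$ with $1\le b-a\le n-1$, we have $\sigma_{a,b}=1$ if and only if $v_av_b\in E_{\T}$. In particular $\sigma_{a,b}=1$ whenever $b-a=1$ or $b-a=n-1$, since these are polygon edges. Also, the rows $b-a=n$ and $b-a=n+1$ consist of $0$'s and $-1$'s, by Theorem~\ref{thm:CCF} and Proposition~\ref{prop:CCF_n-1}.

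To prove the characterization, I take the base edge $v_av_{a+1}$, that is, the triangulation $(\T,a+1)$. By Proposition~\ref{prop:3desect_cont_frac_CCF}~(2), with $i=a+1$ and $j=b-1$ (admissible since $a+1\le b-1\le a+n-1$, using Proposition~\ref{prop:CCF_n-1} at the endpoint), $\sigma_{a,b}$ is the numerator ${\mathcal R}=r_{a+1,b}$ of the Farey label $\rho_{a+1,b}$. So it suffices to show a Farey-labeling statement: for $a+1\le u\le a+n-1$, $r_{a+1,u}=1$ if and only if $v_u$ is adjacent to $v_a=v_{a+n}$, the vertex labelled $\frac01$.

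I would prove this by induction along the parent--child construction. The vertices adjacent to $v_a$ are exactly those created in cells having $v_a$ as a vertex. Each such vertex is a Farey sum $\frac{0}{1}\oplus\frac{r}{s}$, where $\frac rs$ is a neighbour of $v_a$ obtained earlier (starting from $\frac10$). Hence its numerator is again $1$, giving the labels $\frac11,\frac12,\dots$. Every other vertex $v_\gamma$ is the Farey sum of two labels with positive numerators, because neither parent is $v_a$, so $r_{a+1,\gamma}\ge 2$.

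With the characterization in hand, each formula becomes bookkeeping on index pairs $(a,b)$ with $b-a\ge2$.

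\emph{Case $j\le i+n-2$.} Every pair in the region has $2\le b-a\le n-2$. So $\sigma_{a,b}=1$ exactly when $v_av_b$ is a diagonal. The map $(a,b)\mapsto v_av_b$ is a bijection from the region onto the relevant set of pairs of vertices in $V[i,j]$, because no two indices coincide modulo $n$. This gives $\delta(V[i,j])$, $\delta(\{v_i\},V[i+2,j])$, $\delta(V[i,j-2],\{v_j\})$ and $\delta(V[i,u],V[u+2,j])$.

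\emph{Case $j\ge i+n-1$.} I add the new anti-diagonals $b=i+n-1,\,i+n,\,i+n+1$ one at a time.
\begin{itemize}
\item On the anti-diagonal $b=i+n-1$: the pairs $(a,i+n-1)$ with $i+1\le a\le i+n-3$ are the diagonals at $v_{i-1}$ with other endpoint in $V[i,i+n-2]$, and $(i,i+n-1)$ is an edge, contributing one extra $1$.
\item On the anti-diagonal $b=i+n$: the pairs with $i+1\le a\le i+n-2$ give the $c_i-2$ diagonals at $v_i$ plus the edge $(i+1,i+n)$, and $(i,i+n)$ contributes $\sigma=0$.
\item On the anti-diagonal $b=i+n+1$: the entries split similarly into diagonals at $v_{i+1}$, one edge entry, a $0$ and a $-1$.
\end{itemize}
Summing gives $\nu^{\tri}$. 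For example, $(n-3)+1=n-2$, then $+(c_i-2)+1+\ldots=n-2+c_i$, and so on, using $\sum_v(\text{diagonals at }v)$ only locally.

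For $\nu^{\md}$ and $\nu^{\ad}$ the rows $b-a\ge n-1$ contribute exactly one edge entry. That entry together with the diagonals at $v_i$ (respectively $v_j$) gives $(c_i-2)+1+1$ or the analogous count, that is, the $c_i-1$ diagonals plus the edge contributions. In each case I would check that this collapses to $c_i$ (respectively $c_j$).

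The step I expect to be the main obstacle is the rectangular region when $j\ge i+n-1$. Here pairs $(a,b)$ may reach $b-a\ge n-1$, and the excess over $\delta(V[i,u],V[u+2,j])$ depends on how many of the entries $(i,i+n-1)$, $(i+1,i+n)$, $(i+2,i+n+1)$, which are edge $1$'s, lie inside $\{k\le a\le u,\ u+2\le b\le \ell\}$. Whether each lies inside depends on $u$ relative to $i,i+1,\ldots$ and $i+n-3,\ldots$, which produces the values $3,2,1$ in the statement. I also have to check that diagonals counted twice under the wrap-around, because $v_{a}=v_{a+n}$, are correctly matched with $\delta(V[i,u],V[u+2,j])$ when $V[u+2,j]$ overlaps $V[i,u]$ modulo $n$. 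I would handle this by listing explicitly, for each $j\in\{i+n-1,i+n,i+n+1\}$, which edge-entries lie in the rectangle as a function of $u$.
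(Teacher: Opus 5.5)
Your route is the paper's. You identify $\sigma_{a,b}$ with the numerator of $\rho_{a+1,b}$ in the Farey labeling based at $v_av_{a+1}$. You show this numerator equals $1$ exactly when $v_b$ is adjacent to the vertex $v_a$ labelled $\frac01$; your fan argument around $v_a$ justifies what the paper only asserts ``by the definition of the Farey labeling''. Then you count entry by entry, and the paper likewise leaves the $\nu^{\md}$, $\nu^{\ad}$, $\nu^{\rec}$ counts as ``analogous''. The characterization and the case $j\le i+n-2$ are correct.

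The bookkeeping for $j\ge i+n-1$, however, has an off-by-one error, which you cover with ``$+\ldots$''. A vertex lying in $c$ cells has $c+1$ neighbours, hence exactly $c-1$ incident diagonals, not $c-2$.

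On the anti-diagonal $b=i+n$ the entries are $\sigma_{i,i+n}=0$, the edge entry $\sigma_{i+1,i+n}=1$, and, for $i+2\le a\le i+n-2$, one $1$ for each of the $c_i-1$ diagonals at $v_i$ (all of them end in $V[i+2,i+n-2]$). So $\nu^{\ad}_{i,i+n}=(c_i-1)+1=c_i$ and $\nu^{\tri}_{i,i+n}=(n-2)+c_i$. Your ``$(c_i-2)+1$'' leaves the total one short, and the ``$\ldots$'' has nothing to stand for.

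The same slip affects $\nu^{\md}_{i,j}$ with $j\ge i+n-1$. There is only one edge entry there, namely $(i,i+n-1)$, as you say yourself. So the count is $(c_i-1)+1$, and ``$(c_i-2)+1+1$'' is not a valid decomposition. On $b=i+n+1$ you get $-1$, $0$, the edge entry $(i+2,i+n+1)$ and the $c_{i+1}-1$ diagonals at $v_{i+1}$, i.e.\ $c_{i+1}$. With $c-1$ in place of $c-2$, all the $\nu^{\tri}$, $\nu^{\md}$, $\nu^{\ad}$ values close.

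Your plan for the rectangle is sound, and this is exactly the part the paper omits.

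\emph{No double counting.} Each of $[i,u]$ and $[u+2,j]$ contains at most $n$ consecutive integers, so two pairs in $D^{\rec}_{i,u,j}$ giving the same diagonal with the same orientation coincide. Suppose instead $(a,b)$ and $(a',b')$ give it with $v_a=v_{b'}$ and $v_b=v_{a'}$. Then $b'-a=b-a'=n$, so $(b-a)+(b'-a')=2n$, and the two differences cannot both be at most $n-2$.

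\emph{Surjectivity.} Every diagonal in $\Diag_{\T}(V[i,u],V[u+2,j])$ has a representative pair with $2\le b-a\le n-2$. Differences $n-1$, $n$, $n+1$ would give a boundary edge or coincident vertices.

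Hence the excess is exactly the number of the entries $(i,i+n-1)$, $(i+1,i+n)$, $(i+2,i+n+1)$ lying in the rectangle, which reproduces the table. The one exception is the degenerate case $n=3$, $u=i+1$, $j=i+4$: only $(i+1,i+3)$ lies in the rectangle, so the excess is $1$, not the stated $2$.
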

\noindent
\begin{proof} If $j=i$ or $j=i+1$, then 
$\nu^{\tri}_{i,j}=\nu^{\md}_{i,j}=\nu^{\ad}_{i,j}=\nu^{\rec}_{i,u,j}=0$, and 
$\delta(V[i,j])=\delta(\{v_i\},V[i+2,j])=\delta(V[i,j-2],\{v_j\})=\delta(V[i,u],V[u+2,j])
=0$. Therefore, the results follow. 
\par
Suppose $i+2\leq j\leq i+n+1$. Let $(a,b)\in U^{\tri}_{i,j}$. 
We consider the anti-periodic extension of the Farey labeling 
$\Farey(\T,a+1)^{\per}=(\rho_{a+1,u})_{u\in\Z}$ of the triangulation $(\T,a+1)$ with the base edge $v_{a}v_{a+1}$. 
Since $i\leq a$ and $a+2\leq b\leq j$, $a+1\leq b-1\leq (a+1)+n-1$ holds. 
Therefore, by Proposition \ref{prop:3desect_cont_frac_CCF} (1) and Proposition \ref{prop:CCF_n-1}, we have 
\begin{align}\label{eqn:Farey2}
\rho_{a+1,b}=\dfrac{\sigma_{a,b}}{\sigma_{a+1,b}}\;.
\end{align}
\par
\noindent
{\bf Case 1}. Suppose $i+2\leq j\leq i+n$. Then, $a+2\leq b\leq j \leq a+n$ holds. 
Thus, by the definition of the Farey labeling, 
the numerator of $\rho_{a+1,b}$ is equal to $1$ if and only if 
$v_av_b\in E_{\T}$. Therefore,  
\begin{align}\label{eqn:Farey3}
U^{\tri}_{i,j}
&=\{(a,b)\mid i\leq a,\;a+2\leq b\leq j,\;\sigma_{a,b}=1\}
=\{(a,b)\mid i\leq a,\;a+2\leq b\leq j,\;v_av_b\in E_{\T}\}\,.
\end{align}
We consider $(a,b)\in U^{\tri}_{i,j}$ separately in the following cases. 
\par
\noindent
{\bf Case 1.1}. Suppose $i+2\leq j\leq i+n-2$. Since $a+2\leq b\leq j\leq a+n-2$, 
$v_av_b\in E_{\T}$ if and only if $v_av_b\in\Diag_{\T}$. Hence, 
$U^{\tri}_{i,j}=\{(a,b)\mid i\leq a,\;a+2\leq b\leq j,\;v_av_b\in\Diag_{\T}\}$.  
The map $(a,b)\mapsto v_av_b$ 
defines a bijection from $U^{\tri}_{i,j}$ to $\Diag_{\T}(V[i,j])$. Therefore, 
$\nu^{\tri}_{i,j}=\#U^{\tri}_{i,j}=\#\Diag_{\T}(V[i,j])=\delta(V[i,j])$.  
\par
\noindent
{\bf Case 1.2}. Suppose $j=i+n-1$. Then $a+2\leq b\leq i+n-1$ holds. 
If $a=i$ and $b=i+n-1$, then $v_av_b=v_iv_{i+n-1}\in{\mathcal E}$; otherwise, 
$v_av_b\in\Diag_{\T}$. Hence,  
\begin{align*}
U^{\tri}_{i,i+n-1}
&=\{(a,b)\mid i\leq a,\;a+2\leq b\leq i+n-1,\;v_av_b\in\Diag_{\T}\}\cup\{(i,i+n-1)\}\,.
\end{align*}
Since $V[i,i+n-1]=V$ and $\#\Diag_{\T}(V)=n-3$, we obtain 
$\nu^{\tri}_{i,i+n-1}=n-2$.   
\par
\noindent
{\bf Case 1.3}. Suppose that $j=i+n$. Since 
\begin{align*}
U^{\tri}_{i,i+n}=U^{\tri}_{i,i+n-1}\cup U^{\ad}_{i,i+n},\quad U^{\tri}_{i,i+n-1}\cap U^{\ad}_{i,i+n}=\phi\,,
\end{align*}
we have $\nu^{\tri}_{i,i+n}=\nu^{\tri}_{i,i+n-1}+\nu^{\ad}_{i,i+n}$. 
By Case 1.2, $\nu^{\tri}_{i,i+n-1}=n-2$. 
Let $(a,i+n)\in U^{\ad}_{i,i+n}=\{(a,i+n)\mid i\leq a\leq i+n-2,v_av_i\in E_{\T}\}$. 
If $a=i$, then $v_iv_i\not\in E_{\T}$. 
If $a=i+1$, then $v_{i+1}v_i\in {\mathcal E}$.   
Furthermore, if $i+2\leq a\leq i+n-2$, 
then $v_av_i\in E_{\T}$ if and only if 
$v_av_i\in \Diag_{\T}(V[i+2,i+n-2],\{v_{i}\})$. 
Therefore
\begin{align*}
U^{\ad}_{i,i+n}
&=\{(i+1,i+n)\}\cup
\{(a,i+n)\mid i+2\leq a\leq i+n-2,\;v_av_{i}\in\Diag_{\T}(V[i+2,i+n-2],\{v_i\})\}\,.
\end{align*}
Since $\delta(V[i+2,i+n-2],\{v_i\})=c_i-1$, we obtain $\nu^{\ad}_{i,i+n}=c_i$. 
Therefore, $\nu^{\tri}_{i,i+n}=n-2+c_i$. 
\par
\noindent
{\bf Case 2}. Suppose $j=i+n+1$. Since 
\begin{align*}
U^{\tri}_{i,i+n+1}=U^{\tri}_{i,i+n}\cup U^{\ad}_{i,i+n+1},\quad U^{\tri}_{i,i+n}\cap U^{\ad}_{i,i+n+1}=\phi\,,
\end{align*}
we have $\nu^{\tri}_{i,i+n+1}=\nu^{\tri}_{i,i+n}+\nu^{\ad}_{i,i+n+1}$.  
By Case 1.3, $\nu^{\tri}_{i,i+n+1}=n-2+c_i$. 
Let $(a,i+n+1)\in U^{\ad}_{i,i+n+1}=\{(a,i+n+1)\mid i\leq a\leq i+n-1,\sigma_{a,i+n+1}=1\}$. 
If $a=i$, then $\sigma_{i,i+n+1}=-1$. If $a=i+1$, then 
$\sigma_{i+1,i+n+1}=0$. Thus, $(i,i+n+1),\;(i+1,i+n+1)\not\in U^{\ad}_{i,i+n+1}$. 
If $i+2\leq a\leq i+n-1$, then $\sigma_{a,i+n+1}=1$ if and only if 
$v_av_{i+n+1}=v_av_{i+1}\in E_{\T}(V[i+2,i+n-1],\{v_{i+1}\})$. 
Hence, $\nu^{\ad}_{i,i+n+1}=\#E_{\T}(V[i+2,i+n-1],\{v_{i+1}\})=c_{i+1}$, 
and therefore, $\nu^{\tri}_{i,i+n+1}=n-2+c_i+c_{i+1}$. 
\par
The results for $\nu^{\md}_{i,j}$, $\nu^{\ad}_{i,j}$, and $\nu^{\rec}_{i,u,j}$ follow 
from analogous counting arguments, so we omit the details to avoid repetition. 
\end{proof}
\begin{lemm}\label{lem:occ1} 
Let $\quid(\T)=(c_u)_{u\in\Z}$ be the quiddity of a triangulation $\T$ of an $n$-gon. 
For any $i,j\in\Z$ with $i-1\leq j\leq i+n-1$, we have
\begin{align*}
\sum_{u=i}^j(c_u-1)
&=\begin{cases}
\nu^{\tri}_{i-1,j}+\nu^{\tri}_{i,j+1}+\nu^{\rec}_{i,j,i+n-2}
&\text{if $i-1\leq j\leq i+n-3$}, \\[2mm]
\nu^{\tri}_{i-1,i+n-2}+\nu^{\tri}_{i,i+n-1}-c_{i-1}-1
&\text{if $j=i+n-2$}, \\[2mm]
\nu^{\tri}_{i-1,i+n-1}+\nu^{\tri}_{i,i+n}-c_{i-1}-c_i-2
&\text{if $j=i+n-1$}.
\end{cases}
\end{align*}
\end{lemm}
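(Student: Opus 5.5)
The plan is to reduce everything to diagonal counts via Lemma \ref{lemm:count1_trianglation}, and then to show that both sides of the claimed identity count the same diagonals. Throughout I write $W=V[i,j]$ and $W'=V\setminus W$.

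\textbf{Left-hand side.} I will interpret $\sum_{u=i}^j(c_u-1)$ combinatorially. The quantity $c_u-1$ is the number of diagonals incident to $v_u$. Summing over $u\in W$, a diagonal with both endpoints in $W$ is counted twice, and a diagonal with exactly one endpoint in $W$ is counted once. Hence
\[
\sum_{u=i}^j(c_u-1)=2\delta(W)+\delta(W,W').
\]

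\textbf{Main case, $i-1\leq j\leq i+n-3$.} Here I will use Lemma \ref{lemm:count1_trianglation}; the degenerate endpoints are handled by the conventions stated there.
\begin{itemize}
\item Since $j\leq (i-1)+n-2$, we get $\nu^{\tri}_{i-1,j}=\delta(V[i-1,j])$.
\item Since $j+1\leq i+n-2$, we get $\nu^{\tri}_{i,j+1}=\delta(V[i,j+1])$.
\item Since $i+n-2\leq i+n-2$, the rectangle term is $\nu^{\rec}_{i,j,i+n-2}=\delta(V[i,j],V[j+2,i+n-2])$.
\end{itemize}
The vertices outside $W$ are $v_{j+1},\dots,v_{i+n-1}=v_{i-1}$. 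So $\delta(W,W')$ splits into three pieces: the diagonals from $W$ to $v_{i-1}$, those from $W$ to $v_{j+1}$, and those from $W$ to $V[j+2,i+n-2]$. Moreover $\delta(V[i-1,j])=\delta(W)+\delta(W,\{v_{i-1}\})$, and similarly $\delta(V[i,j+1])=\delta(W)+\delta(W,\{v_{j+1}\})$; note that $v_{i-1}v_{j+1}$ is not counted in either. Adding the three terms gives exactly $2\delta(W)+\delta(W,W')$. The only point needing care is that when $j=i+n-3$ we have $v_{j+1}$ and $v_{i-1}$ adjacent, and no vertex lies between them. That is consistent with the formula.

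\textbf{Case $j=i+n-2$.} Now $W'=\{v_{i-1}\}$, so the left-hand side equals $2(n-3-(c_{i-1}-1))+(c_{i-1}-1)=2(n-3)-c_{i-1}+1$. Alternatively, use Proposition \ref{prop:quiddity}(1): the sum is $3(n-2)-n-(c_{i-1}-1)$. On the right, Lemma \ref{lemm:count1_trianglation} with $j=(i-1)+n-1$ and $j=i+n-1$ gives $\nu^{\tri}=n-2$ for both terms, so the right-hand side is $2(n-2)-c_{i-1}-1$. Both sides equal $2n-5-c_{i-1}$, so they agree.

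\textbf{Case $j=i+n-1$.} The left-hand side is the full sum $2n-6$. On the right, $\nu^{\tri}_{i-1,i+n-1}=n-2+c_{i-1}$ (this is the $j=(i-1)+n$ case), and $\nu^{\tri}_{i,i+n}=n-2+c_i$. Subtracting $c_{i-1}+c_i+2$ gives $2n-6$, matching.

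\textbf{Main obstacle.} The only real difficulty is the boundary bookkeeping in the main case: checking that each range falls inside the validity window of Lemma \ref{lemm:count1_trianglation}, and treating the extreme values correctly. At $j=i-1$ we have $W=\emptyset$ and all terms vanish. At $j=i$ and similar small values, the empty-rectangle conventions must be applied.
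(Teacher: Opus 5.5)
Your proof is correct. The boundary cases $j=i+n-2$ and $j=i+n-1$ are handled exactly as in the paper, using the same arithmetic with the values of $\nu^{\tri}$ from Lemma \ref{lemm:count1_trianglation}. In the main case, you and the paper both reduce to \eqref{eqn:sum_quiddity} and double-count diagonal incidences, but the routes differ:
\begin{itemize}
\item The paper splits according to whether $\sigma_{i-1,j+1}=1$ (i.e.\ $v_{i-1}v_{j+1}\in E_{\T}$) or $\sigma_{i-1,j+1}\geq 2$. In each case it passes to a subpolygon ($V[i-1,j+1]$, respectively $V[i-1,y]$) to locate the diagonals leaving $V[i,j]$.
\item You partition the complement once as $\{v_{i-1}\}\sqcup\{v_{j+1}\}\sqcup V[j+2,i+n-2]$. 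This gives the identity uniformly and shows that the case distinction is not needed for the lemma itself.
\end{itemize}
What the paper's split buys is a by-product. In its Case 1 the rectangle term $\delta(V[i,j],V[j+2,i+n-2])$ vanishes, and in Case 2 it is strictly positive, which is inequality \eqref{eqn:sum_quiddity2}. These two facts are exactly what is invoked later in Cases 1.1--2.2 of the proof of Proposition \ref{prop:q_CCF_min_deg}. If your version replaces the paper's, you would need to supply them separately. This is easy: if $v_{i-1}v_{j+1}$ is a side of $\T$, non-crossing forbids any diagonal from $V[i,j]$ to $V[j+2,i+n-2]$. If it is not a side, maximality of $\T$ forces some diagonal to cross the chord $v_{i-1}v_{j+1}$, and any such diagonal joins these two sets.
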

\begin{proof} 
Throughout the proof, we repeatedly use 
Lemma \ref{lemm:count1_trianglation} without further mention. 
\par
We first consider the boundary case $j=i-1$. 
Since  
\begin{align*}
&\Diag_{\T}(V[i-1,i-1])=U^{\tri}_{i-1,i-1}=\phi\;,\quad\Diag_{\T}(V[i,i])=U^{\tri}_{i,j+1}=\phi\;,\\
&\Diag_{\T}(V[i,i-1],V[i+1,i+n-2])=U^{\rec}_{i,i-1,i+n-2}=\phi\,, 
\end{align*}
and since we interpret $\sum\limits_{u=i}^{i-1}(c_u-1)$ as $0$, the result follows. 
\par
Next, let $j=i+n-2$. Then 
$\sum\limits_{u=i}^{i+n-2}(c_u-1)
=\sum\limits_{u=i}^{i+n-1}(c_u-1)-(c_{i+n-1}-1)
=2(n-3)-c_{i-1}+1$. 
Moreover,  
$\nu^{\tri}_{i-1,i+n-2}=\nu^{\tri}_{i,i+n-1}=n-2$. 
Hence, 
$\sum\limits_{u=i}^{i+n-2}(c_u-1)=\nu^{\tri}_{i-1,i+n-2}+\nu^{\tri}_{i,i+n-1}-c_{i-1}-1$. 
\par
Next, let $j=i+n-1$. Then $\sum\limits_{u=i}^{i+n-1}(c_u-1)=2(n-3)$. 
Moreover, 
$\nu^{\tri}_{i-1,i+n-1}=n-2+c_{i-1}$, $\nu^{\tri}_{i,i+n}=n-2+c_i$. 
Hence, $\sum\limits_{u=i}^{i+n-1}(c_u-1)=\nu^{\tri}_{i-1,i+n-1}
+\nu^{\tri}_{i,i+n}-c_{i-1}-c_i-2$.  
\par
Finally, assume that $i\leq j\leq i+n-3$. Then  
\begin{align*}
&\delta(V[i-1,j])=\nu^{\tri}_{i-1,j}\;,\quad\delta(V[i,j+1])=\nu^{\tri}_{i,j+1}\;,\quad
\delta(V[i,j],V[j+2,i+n-2])=\nu^{\rec}_{i,j,i+n-2}\,.
\end{align*}
Thus, it suffices to prove 
\begin{align}\label{eqn:sum_quiddity}
\sum_{u=i}^j(c_u-1)=\delta(V[i-1,j])+\delta(V[i,j+1])+\delta(V[i,j],V[j+2,i+n-2])\,.
\end{align}
\par
\noindent
{\bf Case 1}. Suppose $\sigma_{i-1,j+1}=1$. 
As observed in the proof of Lemma \ref{lemm:count1_trianglation}, 
$\sigma_{i-1,j+1}=1$ implies $v_{i-1}v_{j+1}\in E_{\T}$. 
Set $V'=V[i-1,j+1]=\{v_{i-1},\;v_i,\;\dots,\;v_{j+1}\}$, and let $\T'$ be the subtriangulation of $\T$ with vertex set $V'$ and edge set $E_{\T}(V')$. 
Since $\T'$ is a triangulation of a closed subpolygon, 
$\Diag_{\T}(V[i,j])=\Diag_{\T'}(V'[i,j])$. 
Therefore, in the quantity $\sum\limits_{u=i}^j(c_u-1)$, 
each diagonal in $\Diag_{\T}(V[i,j])$ contributes twice, and 
each diagonal in $\Diag_{\T}(V[i,j],\{v_{i-1},v_{j+1}\})$ contributes once. 
Hence
\begin{align*}
\sum\limits_{u=i}^j(c_u-1)=2\delta(V[i,j])+\delta(V[i,j],\{v_{i-1},v_{j+1}\})\;.
\end{align*}
Counting the diagonals in 
$\Diag_{\T}(V[i-1,j])$ and $\Diag_{\T}(V[i,j+1])$ with multiplicity gives 
\begin{align*}
\delta(V[i-1,j])+\delta(V[i,j+1])=2\delta(V[i,j])+\delta(V[i,j],\{v_{i-1},v_{j+1}\})\;.
\end{align*} 
Therefore, $\sum\limits_{u=i}^j(c_u-1)=\delta(V[i-1,j])+\delta(V[i,j+1])$. 
Since $\T'$ triangulates the closed subpolygon, 
$\delta(V[i,j],V[j+2,i+n-2])=0$, and 
the equation $(\ref{eqn:sum_quiddity})$ follows. 
\par
\noindent
{\bf Case 2}. Suppose $\sigma_{i-1,j+1}\geq2$. 
Again by the proof of Lemma \ref{lemm:count1_trianglation}, 
$\sigma_{i-1,j+1}\geq2$ implies $v_{i-1}v_{j+1}\not\in E_{\T}$. 
Let $v_{x}$ and $v_{y}$ be the vertices immediately before and after 
$v_{j+1}$ that are adjacent to $v_{i-1}$, that is, 
$x=\max\{i\leq u<j+1\mid v_{i-1}v_u\in E_{\T}\}$ and 
$y=\max\{j+1< u\leq i+n-2\mid v_{i-1}v_u\in E_{\T}\}$. 
Set $V'=V[i-1,y]=\{v_{i-1},\;v_i,\;\dots,\;v_y\}$ and 
let $\T'$ be the subtriangulation of $\T$ with vertex set $V'$ and edge set 
$E_{\T}(V')$. 
In the sum $\sum\limits_{u=i}^j(c_u-1)$,
each diagonal in $\Diag_{\T}(V[i,j])$ contributes twice, and 
each diagonal in $\Diag_{\T}(V[i,j],\{v_{i-1}, v_{j+1}\}\cup V[j+2,y])$ contributes once. 
Hence, 
\begin{align*}
\sum\limits_{u=i}^j(c_u-1)
&=2\delta(V[i,j])+\delta(V[i,j],\{v_{i-1}, v_{j+1}\}\cup V[j+2,y])\\
&=2\delta(V[i,j])+\delta(V[i,j],\{v_{i-1}, v_{j+1}\})+\delta(V[i,j],V[j+2,y])\,.
\end{align*}
Since $\T'$ triangulates a closed subpolygon, 
$\Diag_{\T'}(V[i,j],V[y+1,i+n-2])=\phi$. 
Therefore, $\Diag_{\T}(V[i,j],V[j+2,y])=\Diag_{\T}(V[i,j],V[j+2,i+n-2])$. 
Hence, 
\begin{align*}
\sum\limits_{u=i}^j(c_u-1)
&=2\delta(V[i,j])+\delta(V[i,j],\{v_{i-1}, v_{j+1}\})+\delta(V[i,j],V[j+2,i+n-2])\,.
\end{align*}
By the same counting argument as in Case 1, 
$\delta(V[i-1,j])+\delta(V[i,j+1])=2\delta(V[i,j])+\delta(V[i,j],\{v_{i-1},v_{j+1}\})$. 
Thus, the equation $(\ref{eqn:sum_quiddity})$ follows. 
\par
Note that if $\sigma_{i-1,j+1}\geq2$, then 
$v_{x}v_{y}\in\Diag_{\T}(V[i,j],V[j+2,i+n-2])$, and hence 
\begin{align}\label{eqn:sum_quiddity2}
\sum\limits_{u=i}^j(c_u-1)>\delta(V[i-1,j])+\delta(V[i,j+1]). 
\end{align} 
\end{proof}
\section{Correspondence between 
\texorpdfstring{$q$}{\it q}-deformations of CCFs, unimodular matrices, continued fractions, and Farey labelings}
In Sections~\ref{sect:q_CCF_unim_conti_frac}
and~\ref{sect:q_Farey},
we review $q$-deformations of Farey labelings,
Conway--Coxeter friezes, unimodular matrices,
and continued fractions, following
\cite{morier2020q,morier2021quantum}. 
In \cite[Theorem~3]{morier2020q} 
and \cite[Proposition~8, Proposition~10]{morier2021quantum}, 
Morier-Genoud and Ovsienko established correspondences 
among these $q$-deformations. 
However, these results were obtained for triangulations 
with exactly two exterior cells. 
In Proposition~\ref{prop:q_CCF_q_rational}, 
Proposition~\ref{prop:q-CCF_n-1}, 
and Theorem~\ref{prop:q_CCF_weighted_Farey}, 
we extend these correspondences to arbitrary triangulations. 
\subsection{\texorpdfstring{$q$}{\it q}-deformations of 
CCFs, unimodular matrices, and continued fractions}
\label{sect:q_CCF_unim_conti_frac}
In this section, we introduce $q$-deformations of Conway-Coxeter friezes, 
unimodular matrices, and continued fractions.  
\par
For any non-negative integer $a$, 
define a polynomial $[a]_q\in\Z[q]$ with variable $q$ over $\Z$ by 
\begin{align*}
&[0]_q=0\;,\quad [a]_q=1+q+q^2+\cdots+q^{a-1}=\dfrac{1-q^a}{1-q}\,.
\end{align*}
 
\begin{defi}[$q$-deformed CCF {\cite[Definition 2]{morier2021quantum}}]\label{def:q_CCF}
Let $(c_u)_{u\in\Z}$ be a cyclic sequence of positive integers with period $n$. 
We define a $q$-deformation $\{\zeta_{k,\ell}\}_{(k,\ell)\in D}$ of the CCF for $(c_u)_{u\in\Z}$. 
For each $(k,\ell)\in D$, we assign a polynomial $\zeta_{k,\ell}\in\Z[q]$ as follows. 
\par
First, for $(k,\ell)\in D$ with $\ell-k=0,1,2$, 
define $\zeta_{k,\ell}$ by
\begin{align}\label{eqn:q_unimodular0}
\zeta_{k,\ell}=
\begin{cases}
0&\text{if $\ell-k=0$}, \\
1&\text{if $\ell-k=1$}, \\
\left[c_i\right]_q&\text{if $\ell-k=2$}.
\end{cases}
\end{align}
Moreover, 
for $(k,\ell)\in D$ with $\ell-k\geq3$, 
the polynomials $\zeta_{k,\ell}$ are recursively defined 
so that they satisfy the following $q$-unimodular rule: 
\begin{align}\label{eqn:q_unimodular}
\zeta_{k,\ell-1}\zeta_{k+1,\ell}-\zeta_{k+1,\ell-1}\zeta_{k,\ell}&=q^{\sum\limits_{u=k+1}^{\ell-1}(c_u-1)}\,.
\end{align}
We call $\{\zeta_{k,\ell}\}_{(k,\ell)\in D}$ $q$-deformed Conway-Coxeter frieze, 
abbreviated as $q$-CCF, for $(c_u)_{u\in\Z}$. 
\par
Let $\T$ be a triangulation of an $n$-gon. z
The $q$-CCF for its quiddity $\quid(\T)=(c_u)_{u\in\Z}$ is called
the $q$-CCF associated with $\T$ and is denoted by $\CCF_q(\T)$.  
\end{defi}
As for CCFs, 
it is not immediate that $\zeta_{k,\ell}\in\Z[q]$ is well-defined for general cyclic sequences 
$(c_u)_{u\in\Z}$ from \eqref{eqn:q_unimodular}. 
In Proposition \ref{prop:q_CCF_q_rational}, we show that 
$q$-CCFs for quiddities of triangulations of polygons are well-defined. 
Since we work only with such $q$-CCFs, we omit the details in the general case. 
\par
It is clear that $\lim\limits_{q\to1}[c_u]_q=c_u$. 
Moreover, by taking the limit of the $q$-unimodular rule $(\ref{eqn:q_unimodular})$ as $q\to1$,  
we obtain the ordinary unimodular rule  $(\ref{eqn:unimodular})$. 
Let $\sigma_{(k,\ell)}=\lim\limits_{q\to1}\zeta_{k,\ell}$. 
Then $\{\sigma_{(k,\ell)}\}_{(k,\ell)\in D}$ is the CCF for $(c_u)_{u\in\Z}$. 
If $\{\sigma_{(k,\ell)}\}_{(k,\ell)\in D}$ is a positive CCF of width $m$, then we call 
the original $q$-CCF a positive $q$-CCF of width $m$ and denote it by 
$\{\zeta_{k,\ell}\}_{(k,\ell)\in D_{m+2}}$. 
Theorem \ref{thm:q_CCF_2} below and Corollary \ref{cor:q_CCF_1} later state that the positive $q$-CCFs of width $m$ are characterized by triangulations of an $m+1$-gon, which is a $q$-analogue of Conway and Coxeter's theorem \cite[Problem 29]{conway1973triangulated,conway1973triangulated_conti} stated in Theorem \ref{thm:CCF} (2). 
\begin{thm}[cf.~{\cite[Problem 29]{conway1973triangulated,conway1973triangulated_conti}}]
\label{thm:q_CCF_2} 
For any positive $q$-CCF of width $m$, there exists a triangulation $\T$ of an $m+1$-gon 
such that the given $q$-CCF is $\CCF_q(\T)$. 
\end{thm}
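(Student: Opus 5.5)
The approach is to reduce to the classical Conway--Coxeter theorem by specializing $q\to1$. Let $\{\zeta_{k,\ell}\}_{(k,\ell)\in D_{m+2}}$ be a positive $q$-CCF of width $m$ for a cyclic sequence $(c_u)_{u\in\Z}$. By definition, its limit $\{\sigma_{k,\ell}\}=\{\lim_{q\to1}\zeta_{k,\ell}\}$ is the CCF for $(c_u)_{u\in\Z}$; indeed, the initial rows specialize to $0,1,c_{k+1}$ and the $q$-unimodular rule \eqref{eqn:q_unimodular} specializes to \eqref{eqn:unimodular}. This limit is a positive CCF of width $m$. Theorem \ref{thm:CCF} (2) then yields a triangulation $\T$ of an $(m+1)$-gon with $\{\sigma_{k,\ell}\}=\CCF(\T)$. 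Comparing the second rows gives $c_{k+1}=\sigma_{k,k+2}$, so $(c_u)_{u\in\Z}=\quid(\T)$, and $(c_u)$ has period $m+1$, as required.

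It remains to show that the given $q$-CCF coincides with $\CCF_q(\T)$. Both arrays are $q$-CCFs for the same sequence $(c_u)_{u\in\Z}=\quid(\T)$. Hence they have identical rows $0,1,2$, namely $0$, $1$, $[c_{k+1}]_q$, and both satisfy \eqref{eqn:q_unimodular} with the same right-hand sides $q^{\sum_{u=k+1}^{\ell-1}(c_u-1)}$. I will argue by induction on $\ell-k$ that $\zeta_{k,\ell}=\zeta^{\T}_{k,\ell}$ for all $(k,\ell)$ with $0\le \ell-k\le m+2$. If the entries agree in rows $<\ell-k$, the rule gives
\[
\zeta_{k+1,\ell-1}\,\zeta_{k,\ell}=\zeta_{k,\ell-1}\zeta_{k+1,\ell}-q^{\sum_{u=k+1}^{\ell-1}(c_u-1)},
\]
and the same identity holds for $\zeta^{\T}$. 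The right-hand sides agree, so $\zeta_{k+1,\ell-1}(\zeta_{k,\ell}-\zeta^{\T}_{k,\ell})=0$ in $\Z[q]$. Since $\Z[q]$ is an integral domain, it suffices that $\zeta_{k+1,\ell-1}\neq0$.

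This nonvanishing is exactly where positivity and the width restriction enter, and I expect it to be the only delicate point. For $\ell-k\le m+1$, the entry $\zeta_{k+1,\ell-1}$ lies in rows $1,\dots,m-1$ (row $\ell-k-2$). Its specialization $\sigma_{k+1,\ell-1}$ is a positive integer, so $\zeta_{k+1,\ell-1}\ne0$. For $\ell-k=m+2$, the entry $\zeta_{k+1,\ell-1}$ lies in row $m$, whose specialization is $1\neq0$, so the same argument applies. Thus all rows of $D_{m+2}$ are determined uniquely, and the two arrays agree.

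One also needs $\CCF_q(\T)$ to exist as an honest array of polynomials satisfying \eqref{eqn:q_unimodular} on $D_{m+2}$, with $\lim_{q\to1}$ giving $\CCF(\T)$. This is the well-definedness announced for Proposition \ref{prop:q_CCF_q_rational}, obtained from the $q$-deformed matrix products. If that cannot be assumed, I would build $\CCF_q(\T)$ explicitly from the entries of the $q$-matrices $M_q(c_i,\dots,c_j)$, in analogy with Proposition \ref{prop:3desect_cont_frac_CCF} (2), using the $q$-version of Lemma \ref{lemm:n-1}. Given that, the uniqueness argument above completes the proof.
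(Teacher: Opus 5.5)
Your proposal is correct and follows the paper's proof: specialize $q\to1$, apply Theorem \ref{thm:CCF} (2) to obtain $\T$ with $\quid(\T)=(c_u)_{u\in\Z}$, and conclude that the given array is $\CCF_q(\T)$. The paper covers that last step with a one-word ``Hence''. Your induction on $\ell-k$ makes it explicit: every divisor $\zeta_{k+1,\ell-1}$ specializes to a positive integer, so the recursion determines each entry of $D_{m+2}$ uniquely. You also correctly cite Proposition \ref{prop:q_CCF_q_rational} for the existence of $\CCF_q(\T)$.
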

\begin{proof} Let $\{\zeta_{k,\ell}\}_{(k,\ell)\in D}$ be a positive $q$-CCF of width $m$ 
for a cyclic sequence $(c_u)_{u\in\Z}$. 
Let $\sigma_{(k,\ell)}=\lim\limits_{q\to1}\zeta_{k,\ell}$. 
Then $\{\sigma_{(k,\ell)}\}_{(k,\ell)\in D}$ is a positive CCF of width $m$. 
Therefore, by Theorem \ref{thm:CCF} (2), there exists a triangulation $\T$ of an $m+1$-gon 
such that $\{\sigma_{(k,\ell)}\}_{(k,\ell)\in D}=\CCF(\T)$, that is, $\quid(\T)=(c_u)_{u\in\Z}$. 
Hence, 
$\{\zeta_{k,\ell}\}_{(k,\ell)\in D}=\CCF_q(\T)$. The result follows. 
\end{proof}
%さらに, 正値帯状 $q$-CCF の成分 $\zeta_{k,\ell}$ は, 正整数係数多項式となる（命題 \ref{prop:coprime}）.  
%
\begin{defi}[$q$-deformed unimodular matrix (cf.\ {\cite[Definition 4.2]{morier2020q}})]
Let 
\begin{align*}
\SL(2,\Z[q])=\left\{
A=\begin{pmatrix}
a&b\\
c&d
\end{pmatrix}\;\vrule\;a,b,c,d\in\Z[q],\;\det A\in\{q^m\mid m\in\Z\}\;\right\}\,.
\end{align*}
Let $(c_u)_{u\in \Z}$ be a sequence of positive integers. 
For any subsequence $(c_i,c_{i+1},\dots,c_j)$ of $(c_u)_{u\in \Z}$, 
we define a matrix $M_q(c_i,c_{i+1},\dots,c_j)$ in $\SL(2,\Z[q])$ by
\begin{align}\label{eqn:q-prod_mat2}
M_q(c_i,c_{i+1},\dots,c_j)
&=
\begin{pmatrix}
[c_i]_q&-q^{c_i-1}\\
1&0
\end{pmatrix}
\begin{pmatrix}
[c_{i+1}]_q&-q^{c_{i+1}-1}\\
1&0
\end{pmatrix}
\cdots
\begin{pmatrix}
[c_j]_q&-q^{c_j-1}\\
1&0
\end{pmatrix}\;.
\end{align}
\end{defi}

The following lemma is a $q$-analogue of Lemma \ref{lemm:n-1}. 	
The equation \eqref{eqn:q-mat_n-1} is the extension of Morier-Genoud and 
Ovsienko \cite[Corollary 4.8 (1)]{morier2020q} to general triangulations. 
\begin{lemm}[{\cite[Corollary 4.8 (1)]{morier2020q}}]\label{lemm:n-1_q}
Let $\quid(\T)=(c_u)_{u\in\Z}$ be the quiddity of a triangulation $\T$ of an $n$-gon. 
Then, for any $i\in\Z$, we have 
\begin{align}
&M_q(c_i, c_{i+1}, \dots, c_{i+n-1})
=-q^{\,n-3}I\,,\label{eqn:q-mat_n-1}\\
&M_q(c_i, c_{i+1},\dots, c_{i+n-2})
=q^{n-2-c_{i-1}}\begin{pmatrix}
0&-q^{c_{i-1}-1}\\
1&-[c_{i-1}]_q
\end{pmatrix}\,,\label{eqn:q-mat_n-2}\\
&M_q(c_i,c_{i+1},\dots,c_{i+n-3})
=q^{n-1-c_{i-2}-c_{i-1}}
\begin{pmatrix}
q^{c_{i-1}-1}&-q^{c_{i-1}-1}[c_{i-2}]_q\\[.2cm]
[c_{i-1}]_q&-[c_{i-2}]_q[c_{i-1}]_q+q^{c_{i-2}-1}
\end{pmatrix}\,,\label{eqn:q-mat_n-3}
\end{align}
where $I$ denotes the $2\times2$ identity matrix. 
\end{lemm}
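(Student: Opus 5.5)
We prove \eqref{eqn:q-mat_n-1} by induction on $n$, using the local move that removes an ear of the triangulation. For $n=3$ the quiddity is $(1,1,1)$ and
\[
M_q(1,1,1)=\begin{pmatrix}1&-1\\1&0\end{pmatrix}^3=-I=-q^{0}I,
\]
so the claim holds. For $n\geq4$, Proposition \ref{prop:quiddity} (2) gives an exterior cell. Hence some vertex $v_k$ has $c_k=1$, and the cell $v_{k-1}v_kv_{k+1}$ is an ear. Removing $v_k$ gives a triangulation $\T'$ of an $(n-1)$-gon. Its quiddity is obtained from $\quid(\T)$ by deleting $c_k=1$ and replacing $c_{k-1},c_{k+1}$ by $c_{k-1}-1$ and $c_{k+1}-1$.

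The key local identity is
\begin{align*}
&\begin{pmatrix}[a]_q&-q^{a-1}\\1&0\end{pmatrix}
\begin{pmatrix}1&-1\\1&0\end{pmatrix}
\begin{pmatrix}[b]_q&-q^{b-1}\\1&0\end{pmatrix}\\
&\qquad=q\begin{pmatrix}[a-1]_q&-q^{a-2}\\1&0\end{pmatrix}
\begin{pmatrix}[b-1]_q&-q^{b-2}\\1&0\end{pmatrix},
\end{align*}
which we will check by direct multiplication using $[a]_q-q^{a-1}=[a-1]_q$ and $[a]_q=1+q[a-1]_q$. Both sides have first column $\bigl(q[a-1]_q[b-1]_q - q^{a-1},\,q[b-1]_q\bigr)^T$ and second column $\bigl(-q^{b-1}[a-1]_q,\,-q^{b-1}\bigr)^T$. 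Since $M_q(c_i,\dots,c_{i+n-1})$ is a cyclic product, it is conjugate to any cyclic rotation of itself. Because $-q^{n-3}I$ is central, we may rotate so that the triple $c_{k-1},1,c_{k+1}$ sits in the middle of the product. If $n=4$ the triple wraps around, but the same conjugation trick handles it. Applying the identity reduces the product to $q\,M_q(\quid(\T')\text{ over one period})$, which equals $q\cdot(-q^{n-4}I)$ by induction. This gives $-q^{n-3}I$.

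Next, \eqref{eqn:q-mat_n-2} follows by right-multiplying \eqref{eqn:q-mat_n-1}, taken over the period starting at $i$, by the inverse of the last factor $M_q(c_{i+n-1})=M_q(c_{i-1})$. We have
\[
\begin{pmatrix}[c]_q&-q^{c-1}\\1&0\end{pmatrix}^{-1}=q^{1-c}\begin{pmatrix}0&q^{c-1}\\-1&[c]_q\end{pmatrix}.
\]
Multiplying by $-q^{n-3}$ then gives the stated form with prefactor $q^{n-2-c_{i-1}}$. Similarly, \eqref{eqn:q-mat_n-3} follows by multiplying \eqref{eqn:q-mat_n-2} on the right by the inverse of $M_q(c_{i-2})$ and simplifying. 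The $(2,2)$ entry becomes $q^{n-2-c_{i-1}}\cdot q^{1-c_{i-2}}\bigl(-[c_{i-1}]_q[c_{i-2}]_q+q^{c_{i-2}-1}\bigr)$, which matches the claim. The other entries are checked the same way.

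The main obstacle is the ear-removal identity together with the bookkeeping of the cyclic rotation in the small case $n=4$. In that case $c_{k-1}$ and $c_{k+1}$ are adjacent cyclically, and the reduced quiddity is $(1,1,1)$. We will handle it by checking directly that the rotated product of the form $A\,T\,B\,C$ collapses correctly.
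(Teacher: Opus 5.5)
Your proposal is correct and follows essentially the same route as the paper: induction on $n$ by ear removal via the identity $M_q(a,1,b)=q\,M_q(a-1,b-1)$, then \eqref{eqn:q-mat_n-2} and \eqref{eqn:q-mat_n-3} by right-multiplying by the inverses of the last factors. The only difference is cosmetic. You rotate the cyclic product by conjugation and fold $n=4$ into the induction, whereas the paper takes $n=3,4$ as base cases and picks an ear strictly inside the index range, so that no rotation is needed. Note also that for $n=4$ no wrap-around actually occurs: the triple $c_{k-1},1,c_{k+1}$ already occupies three distinct consecutive positions of the $4$-cycle, so your special handling there is unnecessary.
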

\begin{proof} We prove $(\ref{eqn:q-mat_n-1})$ 
in Section \ref{sect:proof_lem}. 
The equation \eqref{eqn:q-mat_n-2} and \eqref{eqn:q-mat_n-3} follow  
by straightforward calculations from \eqref{eqn:q-mat_n-1} as follows.  
\begin{align*}
M_q(c_i, c_{i+1},\dots, c_{i+n-2})
&=M_q(c_i, c_{i+1},\dots, c_{i+n-2},c_{i+n-1})
\begin{pmatrix}
[c_{i-1}]_q&-q^{c_{i-1}-1}\\
1&0
\end{pmatrix}^{-1}\\
&=-q^{n-3}\cdot q^{1-c_{i-1}}
\begin{pmatrix}
0&q^{c_{i-1}-1}\\
-1&[c_{i-1}]_q
\end{pmatrix}\\
&=q^{n-2-c_{i-1}}
\begin{pmatrix}
0&-q^{c_{i-1}-1}\\
1&-[c_{i-1}]_q
\end{pmatrix}
\end{align*}
\begin{align*}
M_q(c_i,c_{i+1},\dots,c_{i+n-3})&= M_q(c_i, c_{i+1},\dots,c_{i+n-3},c_{i+n-2})
{\begin{pmatrix} 
[c_{i-2}]_q& -q^{c_{i-2}-1} \\
1 & 0 \end{pmatrix}
}^{-1}\\
&=q^{n-2-c_{i-1}}\cdot q^{1-c_{i-2}}
\begin{pmatrix}
0&-q^{c_{i-1}-1}\\
1&-[c_{i-1}]_q
\end{pmatrix}
\begin{pmatrix}
0 & q^{c_{i-2}-1} \\
-1&[c_{i-2}]_q 
\end{pmatrix}\\
&=q^{n-1-c_{i-2}-c_{i-1}}
\begin{pmatrix}
q^{c_{i-1}-1}& -q^{c_{i-1}-1}[c_{i-2}]_q\\
[c_{i-1}]_q&-[c_{i-2}]_q[c_{i-1}]_q+q^{c_{i-2}-1}
\end{pmatrix}
\end{align*}
\end{proof}
\begin{defi}[$q$-deformed negative continued fraction
 (cf.\ {\cite[Definition 1.1]{morier2020q}})]
For any subsequence $(c_i,c_{i+1},\dots,c_j)$ of $(c_u)_{u\in \Z}$, 
$[[c_i,c_{i+1},\dots,c_j]]_q$ denote a continued fraction defined by
\begin{align}\label{eqn:q_cont_frac}
[[c_i,c_{i+1},\dots,c_j]]_q
=[c_i]_q-\dfrac{q^{c_i-1}}{[c_{i+1}]_q-\dfrac{q^{c_{i+1}-1}}{[c_{i+2}]_q-\dfrac{q^{c_{i+2}-1}}{
\ddots\raisebox{-.2cm}{$[c_{j-2}]_q-\dfrac{q^{c_{j-2}-1}}{
 [c_{j-1}]_q-\dfrac{q^{c_{j-1}-1}}{[c_j]_q}}$}
 }}}
\end{align}
We call this a $q$-deformed negative continued fraction, which is abbreviated as 
a $q$-continued fraction.   
\end{defi}
As in Definition \ref{def:cont_frac_int}, 
we remark that $[[c_i,c_{i+1},\dots,c_j]]_q$ lies in $\Q(q)$ 
if and only if $[[c_{i+1},\dots,c_j]]_q\neq 0$ 
in the formula \eqref{eqn:q_cont_frac}. 
We will show in Proposition \ref{prop:q_CCF_q_rational} (2) 
that if the sequence $(c_u)_{u\in\Z}$ is the quiddity of some triangulation $\T$ of an $n$-gon, 
then $[[c_i,c_{i+1},\dots,c_j]]_q$ lies in $\Q(q)$ for any $i,j$ with 
$i\leq j\leq i+n-2$. 
Proposition \ref{prop:q_CCF_q_rational} is a $q$-analogue of Proposition 
\ref{prop:3desect_cont_frac_CCF} (2), (3). This is also the extension 
of Morier-Genoud and Ovsienko \cite[Proposition 8, Proposition 10]{morier2021quantum} 
to general triangulations. 
\begin{prop}[cf.\ {\cite[Proposition 8, Proposition 10]{morier2021quantum}}]
\label{prop:q_CCF_q_rational}
Let $\quid(\T)=(c_u)_{u\in\Z}$ be the quiddity of a triangulation $\T$ of an $n$-gon. 
Let $\CCF_q(\T)=\{\zeta_{k,\ell}\}_{(k,\ell)\in D}$ be the $q$-CCF associated with $\T$. 
Fix any $j\in\Z$ with $i\leq j\leq i+n-2$. 
If the $q$-unimodular matrix $M_q(c_i,c_{i+1},\dots,c_j)$ is written in the form 
$M_q(c_i,c_{i+1},\dots,c_j)=
\begin{sumipmatrix}
{\mathcal R}&-q^{c_j-1}{\mathcal R}'\\
{\mathcal S}&-q^{c_j-1}{\mathcal S}'
\end{sumipmatrix}$, then the following hold: 
\begin{enumerate}[{(}1{)}]
\item The entries of the $q$-CCF are related to those of the $q$-unimodular matrix as follows: 
\begin{align*}
\begin{sumipmatrix}
\zeta_{i-1,j+1}&\zeta_{i-1,j}\\
\zeta_{i,j+1}&\zeta_{i,j}
\end{sumipmatrix}
=
\begin{sumipmatrix}
{\mathcal R}&{\mathcal R}'\\
{\mathcal S}&{\mathcal S}'
\end{sumipmatrix}\,,
\end{align*}
where $\zeta_{i,j}=\zeta_{i,i}={\mathcal S}'=0$ if $j=i$, 
$\zeta_{i-1,j+1}=\zeta_{i-1,i+n-1}={\mathcal R}=0$ if $j=i+n-2$, and otherwise 
${\mathcal R}',\;{\mathcal S}',\;{\mathcal R}$, and ${\mathcal S}$ are nonzero polynomials. 
\item We have 
$[[c_i,c_{i+1},\dots,c_{j-1}]]_q=\dfrac{{\mathcal R}'}{{\mathcal S}'}\,,\;
[[c_i,c_{i+1},\dots,c_{j-1},c_j]]_q=\dfrac{{\mathcal R}}{{\mathcal S}}$, 
where $[[c_i,c_{i+1},\dots,c_{j-1}]]_q=[[\quad]]_q=\dfrac{1}{0}$ if $j=i$. 
\end{enumerate}
\end{prop}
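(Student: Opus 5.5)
We would follow the inductive scheme of Proposition \ref{prop:3desect_cont_frac_CCF} (2), (3), by induction on $m=j-i$. The one structural change is the normalization of the right column of $M_q$. Write
\begin{align*}
M_q(c_i,\dots,c_{j-1})=\begin{pmatrix}R&-q^{c_{j-1}-1}R'\\ S&-q^{c_{j-1}-1}S'\end{pmatrix}.
\end{align*}
Right multiplication by $\begin{sumipmatrix}[c_j]_q&-q^{c_j-1}\\1&0\end{sumipmatrix}$ then gives
\begin{align*}
M_q(c_i,\dots,c_j)=\begin{pmatrix}[c_j]_qR-q^{c_{j-1}-1}R'&-q^{c_j-1}R\\ [c_j]_qS-q^{c_{j-1}-1}S'&-q^{c_j-1}S\end{pmatrix}.
\end{align*}
Hence ${\mathcal R}'=R$ and ${\mathcal S}'=S$ at step $j$; this is exactly why the statement factors out $q^{c_j-1}$.

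For the base cases $m=0,1$ we compute directly. We have $M_q(c_i)=\begin{sumipmatrix}[c_i]_q&-q^{c_i-1}\\1&0\end{sumipmatrix}$, so ${\mathcal R}=[c_i]_q=\zeta_{i-1,i+1}$, ${\mathcal R}'=1=\zeta_{i-1,i}$, ${\mathcal S}=1=\zeta_{i,i+1}$ and ${\mathcal S}'=0=\zeta_{i,i}$. Next, $M_q(c_i,c_{i+1})$ has $(1,1)$ entry $[c_i]_q[c_{i+1}]_q-q^{c_i-1}$, and this agrees with $\zeta_{i-1,i+2}$ obtained from the $q$-unimodular rule \eqref{eqn:q_unimodular} with exponent $c_i-1$.

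For the inductive step we factor $M_q(c_i,\dots,c_j)$ both as $M_q(c_i)M_q(c_{i+1},\dots,c_j)$ and as $M_q(c_i,\dots,c_{j-1})M_q(c_j)$. Apply the induction hypothesis to the sequence starting at $i+1$, which is legitimate since the argument is uniform in the starting index, and to the sequence ending at $j-1$. This identifies ${\mathcal R}'=\zeta_{i-1,j}$, ${\mathcal S}'=\zeta_{i,j}$ and ${\mathcal S}=\zeta_{i,j+1}$, as in the classical proof. From the first factorization, the second row of $M_q(c_i,\dots,c_j)$ is the first row of $M_q(c_{i+1},\dots,c_j)$, and this gives ${\mathcal S}$ and ${\mathcal S}'$ consistently.

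It remains to show ${\mathcal R}=\zeta_{i-1,j+1}$, and we use determinants for this. We have $\det M_q(c_i,\dots,c_j)=q^{\sum_{u=i}^j(c_u-1)}$, which gives $q^{c_j-1}({\mathcal R}'{\mathcal S}-{\mathcal R}{\mathcal S}')=q^{\sum_{u=i}^{j}(c_u-1)}$, that is,
\begin{align*}
\zeta_{i-1,j}\zeta_{i,j+1}-{\mathcal R}\,\zeta_{i,j}=q^{\sum_{u=i}^{j-1}(c_u-1)}.
\end{align*}
This is the $q$-unimodular rule \eqref{eqn:q_unimodular} at $(k,\ell)=(i-1,j+1)$ with ${\mathcal R}$ in place of $\zeta_{i-1,j+1}$. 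So whenever $\zeta_{i,j}\neq0$, the rule determines $\zeta_{i-1,j+1}$ uniquely as ${\mathcal R}$. The same identity proves well-definedness: ${\mathcal R}$ is a polynomial in $\Z[q]$, so the recursion produces polynomials, which answers the remark after Definition \ref{def:q_CCF}.

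The main obstacle is nonvanishing. We need $\zeta_{i,j}\neq0$ to solve the rule, and nonzero ${\mathcal R},{\mathcal S},{\mathcal R}',{\mathcal S}'$ outside the stated exceptions. The plan is to specialize at $q=1$. Then $M_q\to M$, and by Proposition \ref{prop:3desect_cont_frac_CCF} (1), (2) the limits $\sigma$ of these entries are positive integers, except $\sigma_{i,i}=0$ and $\sigma_{i-1,i+n-1}=0$. A polynomial with nonzero value at $q=1$ is nonzero, so this handles every case except $j=i+n-2$. In that case we must show ${\mathcal R}=0$ exactly, not merely that it vanishes at $q=1$. Here we read off \eqref{eqn:q-mat_n-2} of Lemma \ref{lemm:n-1_q}: the $(1,1)$ entry is $0$. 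Identifying $\zeta_{i-1,i+n-1}$ with this ${\mathcal R}$ only requires $\zeta_{i,i+n-2}\neq0$, which again holds because its specialization $\sigma_{i,i+n-2}$ is positive.

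For part (2), the same induction on the first factorization gives $[[c_i,\dots,c_j]]_q=[c_i]_q-q^{c_i-1}/[[c_{i+1},\dots,c_j]]_q=([c_i]_q\,r-q^{c_i-1}s)/r$, where $r,s$ are the left-column entries of $M_q(c_{i+1},\dots,c_j)$. This is ${\mathcal R}/{\mathcal S}$, provided $r\neq0$, and $r\neq0$ follows from part (1). The claim ${\mathcal R}'/{\mathcal S}'=[[c_i,\dots,c_{j-1}]]_q$ is the induction hypothesis at $j-1$, with the convention $[[\;]]_q=1/0$ when $j=i$.
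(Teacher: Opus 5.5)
Your proposal is correct and follows essentially the same route as the paper: induction on $j-i$, the two factorizations $M_q(c_i)M_q(c_{i+1},\dots,c_j)=M_q(c_i,\dots,c_{j-1})M_q(c_j)$ to identify ${\mathcal R}',{\mathcal S}',{\mathcal S}$, comparison of the determinant with the $q$-unimodular rule to obtain ${\mathcal R}=\zeta_{i-1,j+1}$, and nonvanishing via specialization at $q=1$ together with \eqref{eqn:q-mat_n-2} for the exact vanishing when $j=i+n-2$. The only minor difference is in the base case $j=i+1$: the paper checks nonvanishing by the explicit expansion $[c_i][c_{i+1}]-q^{c_i-1}=[c_i-1]+q[c_i-1][c_{i+1}-1]+q^{c_i}[c_{i+1}-1]$, whereas you use the $q=1$ specialization of the matrix entries uniformly, which is slightly cleaner because it does not presuppose $\lim_{q\to1}\zeta_{k,\ell}=\sigma_{k,\ell}$.
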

In the remainder of this paper, we often abbreviate $[a]_q$ as $[a]$. 
\begin{proof}
As in Proposition \ref{prop:3desect_cont_frac_CCF} (2), (3), 
we will prove (1) and (2) by induction on $m=j-i$. 
\par
First, we consider the case $m=0$, that is, $j=i$. 
$M_q(c_i)=\begin{sumipmatrix}
[c_i]&-q^{c_i-1}\\
1&0
\end{sumipmatrix}$ holds. 
By the definition of $q$-CCFs, we have
\begin{align*}
\begin{pmatrix}
\zeta_{i-1,j+1}&\zeta_{i-1,j}\\
\zeta_{i,j+1}&\zeta_{i,j}
\end{pmatrix}
=
\begin{pmatrix}
\zeta_{i-1,i+1}&\zeta_{i-1,i}\\
\zeta_{i,i+1}&\zeta_{i,i}
\end{pmatrix}
=
\begin{pmatrix}
[c_i]&1\\
1&0
\end{pmatrix}\,.
\end{align*}
Except for $\zeta_{i,i}=0$, entries are nonzero, and therefore, the result (1) follows. 
\par
Furthermore, 
$[[c_i,\cdots,c_{i-1}]]_q=[[\;\;]]_q=\dfrac{1}{0}\,,\;[[c_i,\dots,c_i]]_q
=[[c_i]]_q=[c_i]=\dfrac{[c_i]}{1}$. 
Hence, the result (2) follows.  
\par
Next, we consider the case $m=1$, that is, $j=i+1$. 
By straightforward calculation, we have
\[
M_q(c_i,c_{i+1})=
\begin{pmatrix}
[c_i]&-q^{c_i-1}\\
1&0
\end{pmatrix}
\begin{pmatrix}
[c_{i+1}]&-q^{c_{i+1}-1}\\
1&0
\end{pmatrix}
=
\begin{pmatrix}
[c_i][c_{i+1}]-q^{c_i-1}&-q^{c_{i+1}-1}[c_i]\\
[c_{i+1}]&-q^{c_{i+1}-1}
\end{pmatrix}\,.
\]
By the definition of $q$-CCF, it follows that
\begin{align}\label{eqn:q_CCF_i+1}
&\zeta_{i-1,i+1}=[c_i]\,,\quad \zeta_{i,i+1}=1\,,\quad 
\zeta_{i,i+2}=[c_{i+1}], 
\end{align}
and these are nonzero polynomials. 
The remaining equation 
\begin{align*}
\zeta_{i-1,j+1}=\zeta_{i-1,i+2}=[c_i][c_{i+1}]-q^{c_i-1}
\end{align*}
in (1) holds from the $q$-unimodular rule and the equations in \eqref{eqn:q_CCF_i+1}. 
We will prove separately, in the two cases $j=i+1\leq i+ n-3$ and $j=i+1=i+n-2$, 
that the entries are nonzero.  
\par
\noindent
{\bf Case 1}. Suppose $j=i+1\leq i+ n-3$. 
Since $n\geq 4$, we have $(c_i,c_{i+1})\neq(1,1)$, that is, $c_i\geq 2$ or $c_{i+1}\geq 2$. 
Since 
\begin{align*}
[c_i][c_{i+1}]-q^{c_i-1}
&=[c_i-1]+q[c_i-1][c_{i+1}-1]+q^{c_i}[c_{i+1}-1]
\end{align*}
and $[c_i-1]$ or $q^{c_i}[c_{i+1}-1]$ are nonzero, 
$[c_i][c_{i+1}]-q^{c_i-1}$ is nonzero. 
\par
\noindent
{\bf Case 2}. Suppose $j=i+1=i+n-2$. Then $n=3$. 
Since the possible quiddity is $(1,1,1)$,   
\begin{align*}
[c_i][c_{i+1}]-q^{c_i-1}
=[1][1]-1=0
\end{align*}
holds. Therefore, the result (1) follows. 
\par
The result (2) follows from straightforward calculations: 
\begin{align*}
&[[c_i,\dots,c_{j-1}]]_q=[[c_i]]_q=[c_i]=\dfrac{[c_i]}{1}\,,\\
&[[c_i,\dots,c_j]]_q=[[c_i,c_{i+1}]]_q=[c_i]-\dfrac{q^{c_i-1}}{[c_{i+1}]}=\dfrac{[c_i][c_{i+1}]-q^{c_i-1}}{[c_{i+1}]}\,.
\end{align*}
\par
Next, assume that the results (1) and (2) hold for all $j\leq i+m$ with $m\geq1$. 
We show that they also hold for $j=i+m+1$. 
The argument is parallel to that in the proof of 
Proposition \ref{prop:3desect_cont_frac_CCF} (2) and (3). 
\par
Let 
$M_q(c_{i+1},\dots,c_j)=
\begin{sumipmatrix}
r&-q^{c_j-1}r'\\
s&-q^{c_j-1}s'
\end{sumipmatrix}$ and 
$M_q(c_i,\dots,c_{j-1})=
\begin{pmatrix}
R&-q^{c_{j-1}-1}R'\\
S&-q^{c_{j-1}-1}S'
\end{pmatrix}$. 
Then, by the induction hypothesis,  
\begin{align}
&\begin{sumipmatrix}
\zeta_{i,j+1}&\zeta_{i,j}\\
\zeta_{i+1,j+1}&\zeta_{i+1,j}
\end{sumipmatrix}
=\begin{sumipmatrix}
r&r'\\
s&s'
\end{sumipmatrix}\,,\;
\begin{sumipmatrix}
\zeta_{i-1,j}&\zeta_{i-1,j-1}\\
\zeta_{i,j}&\zeta_{i,j-1}
\end{sumipmatrix}
=\begin{sumipmatrix}
R&R'\\
S&S'
\end{sumipmatrix}\label{eqn:q_shucho2}\,,
\end{align}
and the entries are nonzero polynomials. 
Furthermore, we have 
\begin{align}
&[[c_{i+1},\dots,c_{j-1}]]_q=\dfrac{r'}{s'}\,,\;
[[c_{i+1},\dots,c_j]]_q=\dfrac{r}{s}\,,\;
[[c_i,\dots,c_{j-2}]]_q=\dfrac{R'}{S'}\,,\;
[[c_i,\dots,c_{j-1}]]_q=\dfrac{R}{S}\,.\label{eqn:q_shucho3}
\end{align}
Then, we have
\begin{align*}
&M_q(c_i,\dots,c_j)
=M_q(c_i)M_q(c_{i+1},\dots,c_j)
=\begin{pmatrix}
[c_i]r-q^{c_i-1}s&-q^{c_j-1}([c_i]r'-q^{c_i-1}s')\\
r&-q^{c_j-1}r'
\end{pmatrix}\,,
\end{align*}
and 
\begin{align}\label{eqn:q-CCF_calc}
&M_q(c_i,\dots,c_j)
=M_q(c_i,\dots,c_{j-1})M_q(c_j)
=\begin{pmatrix}
[c_j]R-q^{c_{j-1}-1}R'&-q^{c_j-1}R\\
[c_j]S-q^{c_{j-1}-1}S'&-q^{c_j-1}S
\end{pmatrix}\,.
\end{align}
Let $M_q(c_i,c_{i+1},\dots,c_j)=
\begin{sumipmatrix}
{\mathcal R}&-q^{c_j-1}{\mathcal R}'\\
{\mathcal S}&-q^{c_j-1}{\mathcal S}'
\end{sumipmatrix}$. 
From \eqref{eqn:q_shucho2}, it follows that  
\begin{align*}
{\mathcal R}'=R=\zeta_{i-1,j},\;{\mathcal S}'=S=r'=\zeta_{i,j},\;
{\mathcal S}=r=\zeta_{i,j+1}\,,
\end{align*}
and these are nonzero polynomials. 
Thus, we have $M_q(c_i,\dots,c_j)=
\begin{sumipmatrix}
{\mathcal R}&-q^{c_j-1}\zeta_{i-1,j}\\
\zeta_{i,j+1}&-q^{c_j-1}\zeta_{i,j}
\end{sumipmatrix}$. 
Since  each matrix 
$\begin{sumipmatrix}
[c_u]&-q^{c_u-1}\\
1&0
\end{sumipmatrix}$ in the product defining $M_q(c_i,\dots,c_j)$ has 
determinant $q^{c_u-1}$, it follows that 
\begin{align*}
\det M_q(c_i,\dots,c_j)
=q^{c_j-1}(\zeta_{i-1,j}\zeta_{i,j+1}-{\mathcal R}\zeta_{i,j})=q^{\sum\limits_{u=i}^{j}(c_u-1)}\,. 
\end{align*}
Thus $\zeta_{i-1,j}\zeta_{i,j+1}-{\mathcal R}\zeta_{i,j}=q^{\sum\limits_{u=i}^{j-1}(c_u-1)}$. 
By the $q$-unimodular rule, 
$\zeta_{i-1,j}\zeta_{i,j+1}-\zeta_{i-1,j+1}\zeta_{i,j}=q^{\sum\limits_{u=i}^{j-1}(c_u-1)}$, 
we obtain ${\mathcal R}=\zeta_{i-1,j+1}$ since $\zeta_{i,j}=S=r'$ is a nonzero polynomial.
Moreover, $\lim\limits_{q\to1}\zeta_{i-1,j+1}=\sigma_{i-1,j+1}>0$ 
for $j\leq i+n-3$ by Proposition \ref{prop:3desect_cont_frac_CCF}, 
so $\zeta_{i-1,j+1}$ is nonzero in this case, 
whereas $\zeta_{i-1,j+1}=0$ if $j=i+n-2$ by 
\eqref{eqn:q-mat_n-2} of Lemma~\ref{lemm:n-1_q}. 
Therefore, the result (1) holds.
\par
By \eqref{eqn:q_shucho3}, 
we have
\begin{align*}
&[[c_i,\dots,c_{j-1}]]_q=\dfrac{R}{S}\,,\\
&[[c_i,\dots,c_j]]_q
=[c_i]-\dfrac{q^{c_i-1}}{[[c_{i+1},\dots,c_j]]_q}
%=[c_i]-\dfrac{\;q^{c_i-1}\;}{\dfrac{r}{s}}
=[c_i]-\frac{\;q^{c_i-1}s\;}{r}
=\dfrac{\;[c_i]r-q^{c_i-1}s\;}{r}\,. 
\end{align*}
Thus, the result (2) follows. 
\end{proof}
The following Proposition \ref{prop:q-CCF_n-1} extends 
Proposition \ref{prop:q_CCF_q_rational} (1) and (2) to the case $j=i+n-1$. 
\begin{prop}\label{prop:q-CCF_n-1} 
The equation in Proposition \ref{prop:q_CCF_q_rational} (1) remains valid for 
$j=i+n-1$. 
\end{prop}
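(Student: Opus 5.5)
The plan is to read off both sides of the claimed identity for $j=i+n-1$ directly. The matrix side comes from Lemma~\ref{lemm:n-1_q}. The frieze side comes from Proposition~\ref{prop:q_CCF_q_rational} applied at $j=i+n-2$ for the two base edges $v_{i-1}v_i$ and $v_iv_{i+1}$, followed by one application of the $q$-unimodular rule. This mirrors the proof of Proposition~\ref{prop:CCF_n-1}.

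First, by \eqref{eqn:q-mat_n-1} we have $M_q(c_i,\dots,c_{i+n-1})=-q^{n-3}I$. Writing this matrix as $\begin{sumipmatrix}{\mathcal R}&-q^{c_{i+n-1}-1}{\mathcal R}'\\ {\mathcal S}&-q^{c_{i+n-1}-1}{\mathcal S}'\end{sumipmatrix}$ and using $c_{i+n-1}=c_{i-1}$ gives
\[
{\mathcal R}=-q^{n-3},\quad {\mathcal R}'=0,\quad {\mathcal S}=0,\quad {\mathcal S}'=q^{n-2-c_{i-1}}.
\]
So it suffices to show the following four identities:
\[
\zeta_{i-1,i+n-1}=0,\quad \zeta_{i,i+n-1}=q^{n-2-c_{i-1}},\quad \zeta_{i,i+n}=0,\quad \zeta_{i-1,i+n}=-q^{n-3}.
\]

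Second, the first two identities come from Proposition~\ref{prop:q_CCF_q_rational}(1) with $j=i+n-2$, compared with \eqref{eqn:q-mat_n-2}. The upper left entry of that matrix is $0$, which gives $\zeta_{i-1,i+n-1}=0$. The lower left entry is $q^{n-2-c_{i-1}}$, which gives $\zeta_{i,i+n-1}=q^{n-2-c_{i-1}}$. For the third identity, apply the same proposition with $i$ replaced by $i+1$ and $j=(i+1)+n-2=i+n-1$. Its upper left entry is $\zeta_{i,i+n}$, and by \eqref{eqn:q-mat_n-2} (with $c_i$ in place of $c_{i-1}$) this entry equals $0$.

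Third, the last entry $\zeta_{i-1,i+n}$ is determined by the $q$-unimodular rule at $(k,\ell)=(i-1,i+n)$. We have $\zeta_{i-1,i+n-1}\zeta_{i,i+n}=0$ and the coefficient $\zeta_{i,i+n-1}=q^{n-2-c_{i-1}}$ is nonzero. Hence $\zeta_{i-1,i+n}=-q^{E-(n-2-c_{i-1})}$, where $q^E$ is the right-hand side of the rule.

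The main obstacle is fixing the exponent $E$ consistently, since we must land exactly on $-q^{n-3}$. In the proof of Proposition~\ref{prop:q_CCF_q_rational}, the rule is used in the form forced by the determinant, namely $\zeta_{i-1,j}\zeta_{i,j+1}-\zeta_{i-1,j+1}\zeta_{i,j}=q^{\sum_{u=i}^{j-1}(c_u-1)}$. I will use it in this same form, checking it against $\det M_q$ rather than against the displayed summation range in \eqref{eqn:q_unimodular}. With $j=i+n-1$ and Proposition~\ref{prop:quiddity}(1), this gives
\[
E=\sum_{u=i}^{i+n-2}(c_u-1)=2(n-3)-(c_{i-1}-1).
\]
Therefore $\zeta_{i-1,i+n}=-q^{2n-5-c_{i-1}-(n-2-c_{i-1})}=-q^{n-3}$, as required. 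As a cross-check, the determinant identity $q^{c_{i-1}-1}\bigl({\mathcal R}'{\mathcal S}-{\mathcal R}{\mathcal S}'\bigr)=q^{2(n-3)}$ holds for the values above, which confirms that this exponent convention is the right one.
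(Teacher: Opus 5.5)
Your proof is correct and follows the paper's argument essentially step for step. You read off $\mathcal{R},\mathcal{R}',\mathcal{S},\mathcal{S}'$ from \eqref{eqn:q-mat_n-1}, obtain $\zeta_{i-1,i+n-1}$, $\zeta_{i,i+n-1}$, $\zeta_{i,i+n}$ from Proposition~\ref{prop:q_CCF_q_rational}(1) together with \eqref{eqn:q-mat_n-2} for the base indices $i$ and $i+1$, and then solve the $q$-unimodular rule for $\zeta_{i-1,i+n}$. Your exponent $\sum_{u=i}^{i+n-2}(c_u-1)$, which sums up to $\ell-2$ rather than the $\ell-1$ displayed in \eqref{eqn:q_unimodular}, is exactly the one the paper's proof uses (and the one forced by Proposition~\ref{prop:q_CCF_q_rational}), so your caution on that point is well placed.
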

\begin{proof} 
Let $M_q(c_i,c_{i+1},\dots,c_{i+n-1})=
\begin{sumipmatrix}
{\mathcal R}&-q^{c_{i-1}-1}{\mathcal R}'\\
{\mathcal S}&-q^{c_{i-1}-1}{\mathcal S}'
\end{sumipmatrix}$. 
From the equation \eqref{eqn:q-mat_n-1} in Proposition \ref{lemm:n-1_q}, we have
$M_q(c_i,c_{i+1},\dots,c_{i+n-1})
=-q^{n-3}
\begin{sumipmatrix}
1&0\\
0&1
\end{sumipmatrix}$. Therefore, we obtain 
\begin{align*}
\begin{pmatrix}
{\mathcal R}&{\mathcal R}'\\
{\mathcal S}&{\mathcal S}'
\end{pmatrix}
=
\begin{pmatrix}
-q^{n-3}&0\\
0&q^{n-2-c_{i-1}}
\end{pmatrix}\,.
\end{align*}
From the equation \eqref{eqn:q-mat_n-2} in Proposition \ref{lemm:n-1_q}, we have 
\begin{align*}
&M_q(c_i,c_{i+1},\dots,c_{i+n-2})
=q^{n-2-c_{i-1}}
\begin{sumipmatrix}
0&-q^{c_{i-1}-1}\\
1&-[c_{i-1}]
\end{sumipmatrix}\,,\\
&M_q(c_{i+1},c_{i+2},\dots,c_{i+n-1})
=q^{n-2-c_i}
\begin{sumipmatrix}
0&-q^{c_i-1}\\
1&-[c_i]
\end{sumipmatrix}\,.
\end{align*}
From Proposition \ref{prop:q_CCF_q_rational} (1) in the case $j=i+n-2$ for the triangulation 
$(\T,i)$ or $j=i+n-1$ for the triangulation $(\T,i+1)$, it follows that
\begin{align*}
\zeta_{i-1,i+n-1}=0,\quad\zeta_{i,i+n-1}=q^{n-2-c_{i-1}},\quad\zeta_{i,i+n}=0\,.
\end{align*}
By these equations and the $q$-unimodular rule, 
$\zeta_{i-1,i+n-1}\zeta_{i,i+n}-\zeta_{i-1,i+n}\zeta_{i,i+n-1}
=q^{\sum\limits_{u=i}^{i+n-2}(c_u-1)}$, we have 
$\zeta_{i-1,i+n}=-q^{\sum\limits_{u=i}^{i+n-1}(c_u-1)-(n-3)}$. 
Since $\sum\limits_{u=i}^{i+n-1}(c_u-1)=2(n-3)$, 
$\zeta_{i,i+n-1}=-q^{n-3}$ holds. 
Hence, the equation in Proposition \ref{prop:q_CCF_q_rational} (1) holds for $j=i+n-1$.  
\par
From Proposition \ref{prop:q_CCF_q_rational} (2) in the case $j=i+n-1$ for the triangulation 
$(\T,i+1)$, we have 
$[[c_{i+1},\dots,c_{i+n-1}]]_q=\dfrac{\zeta_{i,i+n}}{\zeta_{i+1,i+n}}=
\dfrac{0}{q^{n-2-c_{i+n}}}=\dfrac{0}{q^{n-2-c_i}}$. 
Here, we compute $[[c_i,c_{i+1},\dots,c_{i+n-1}]]_q$ as follows: 
\begin{align*}
[[c_i,c_{i+1},\dots,c_{i+n-1}]]_q
=[c_i]_q-\dfrac{q^{c_i-1}}{\quad\dfrac{0}{q^{n-2-c_i}}\quad}
&=[c_i]_q-\dfrac{q^{c_i-1}\cdot q^{n-2-c_i}}{0}
=\dfrac{[c_i]_q\cdot0-q^{n-3}}{0}
=\dfrac{-q^{n-3}}{0}\,.
\end{align*}
Since $\zeta_{i,i+n}=0$ and $\zeta_{i-1,i+n}=-q^{n-3}$, 
we have $[[c_i,c_{i+1},\dots,c_{i+n-1}]]_q=\dfrac{\zeta_{i,i+n-1}}{\zeta_{i,i+n}}$. 
Therefore, the equations in Proposition \ref{prop:q_CCF_q_rational} (2) hold for 
$j=i+n-1$. 
\end{proof}
From Proposition \ref{prop:q_CCF_q_rational} (1) and Proposition \ref{prop:q-CCF_n-1}, 
for the $q$-CCF $\CCF_q(\T)=\{\zeta_{k,\ell}\}_{D_{n+1}}$ associated with a triangulation $\T$ 
of an $n$-gon and $i,j\in\Z$ with $i\leq j\leq i+n-1$, 
we have
\begin{align*}
&M_q(c_i,c_{i+1},\dots,c_j)=
\begin{pmatrix}
\zeta_{i-1,j+1}&-q^{c_j-1}\zeta_{i-1,j}\\
\zeta_{i,j+1}&-q^{c_j-1}\zeta_{i,j}
\end{pmatrix}\,,\\
&M_q(c_i,c_{i+1},\dots,c_j)=M_q(c_i,c_{i+1},\dots,c_{j-1})M_q(c_j)
=\begin{pmatrix}
\zeta_{i-1,j}&-q^{c_{j-1}-1}\zeta_{i-1,j-1}\\
\zeta_{i,j}&-q^{c_{j-1}-1}\zeta_{i,j-1}
\end{pmatrix}
\begin{pmatrix}
[c_j]&-q^{c_j-1}\\
1&0
\end{pmatrix}\,.
\end{align*}
Therefore, we obtain the recurrence relation of $\zeta_{k,\ell}$ with $(k,\ell)\in D_{n+1}$ as follows:
\begin{align*}
\zeta_{i-1,j+1}=[c_j]\zeta_{i-1,j}-q^{c_{j-1}-1}\zeta_{i-1,j-1}\,.
\end{align*}
From the same propositions, we further obtain the following corollary. 
This is a $q$-analogue of Conway and Coxeter's theorem \cite[Problem 28]{conway1973triangulated,conway1973triangulated_conti} formulated 
in Theorem \ref{thm:CCF} (1). 
\begin{cor}\label{cor:q_CCF_1} 
For any triangulation of an $n$-gon, the associated  $q$-CCF $\CCF_q(\T)$ 
is a positive $q$-CCF of width $n-1$. 
\end{cor}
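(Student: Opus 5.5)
The plan is to reduce the statement to the classical Conway--Coxeter theorem, Theorem \ref{thm:CCF} (1), through the specialization $q\to1$. By definition, the $q$-CCF $\CCF_q(\T)$ is a positive $q$-CCF of width $n-1$ exactly when it is well defined and its limit $\{\lim_{q\to1}\zeta_{k,\ell}\}$ is a positive CCF of width $n-1$. So two things must be checked.

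\textbf{Step 1: well-definedness.} The recursion \eqref{eqn:q_unimodular} determines every $\zeta_{k,\ell}$ in $\Z[q]$ with $0\le \ell-k\le n+1$. By Proposition \ref{prop:q_CCF_q_rational} (1) and Proposition \ref{prop:q-CCF_n-1}, for $(k,\ell)=(i-1,j+1)$ with $i\le j\le i+n-1$, the entry $\zeta_{k,\ell}$ equals an entry of the product $M_q(c_i,\dots,c_j)$. That entry is a polynomial. Moreover, in the proof of Proposition \ref{prop:q_CCF_q_rational}, solving \eqref{eqn:q_unimodular} for $\zeta_{i-1,j+1}$ only requires dividing by $\zeta_{i,j}$, which was shown there to be nonzero. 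Hence the recursion never divides by zero, and every entry is a polynomial rather than merely a rational function. Since every $(k,\ell)\in D_{n+1}$ has the form $(i-1,j+1)$ for suitable $i$ (take $i=k+1$), this covers the whole region.

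\textbf{Step 2: the limit is the CCF.} Setting $q=1$ turns $[c]_q$ into $c$, $q^{c-1}$ into $1$, and $M_q$ into $M$. Therefore $\lim_{q\to1}\zeta_{i-1,j+1}$ equals the corresponding entry of $M(c_i,\dots,c_j)$. By Proposition \ref{prop:3desect_cont_frac_CCF} (2) and Proposition \ref{prop:CCF_n-1}, that entry is $\sigma_{i-1,j+1}$. So the specialization of $\CCF_q(\T)$ is exactly $\CCF(\T)$. Equivalently, the $q\to1$ limit of \eqref{eqn:q_unimodular} is \eqref{eqn:unimodular} with the same initial rows, so the two arrays agree wherever both are defined.

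\textbf{Step 3: positivity.} By Theorem \ref{thm:CCF} (1), $\CCF(\T)$ is a positive CCF of width $n-1$: rows $1$ through $n-1$ are positive and row $n-1$ consists of $1$'s. The $q$-analogues of the boundary rows can be read off explicitly from \eqref{eqn:q-mat_n-2} and Proposition \ref{prop:q-CCF_n-1}:
\[
\zeta_{i,i+n-1}=q^{n-2-c_{i-1}},\qquad \zeta_{i-1,i+n-1}=0,\qquad \zeta_{i-1,i+n}=-q^{n-3}.
\]
These are consistent with the limiting values $1$, $0$, $-1$. This gives the claim.

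The only delicate point is Step 1: the definition of the $q$-CCF via \eqref{eqn:q_unimodular} involves division, so one must make sure that the nonvanishing of $\zeta_{i,j}$ obtained inductively in Proposition \ref{prop:q_CCF_q_rational} covers every entry that is needed. I would handle this by indexing each entry as $(i-1,j+1)$ and citing the nonvanishing statements from those propositions, as described above.
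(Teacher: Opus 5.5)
Your proposal is correct and follows essentially the same route as the paper. The paper deduces the corollary directly from Proposition~\ref{prop:q_CCF_q_rational}~(1) and Proposition~\ref{prop:q-CCF_n-1}, which identify the entries on $D_{n+1}$ with entries of $M_q$, combined with the definition of a positive $q$-CCF via the specialization $q\to1$ and Theorem~\ref{thm:CCF}~(1); your explicit check that every divisor $\zeta_{i,j}$ is nonzero, including $\zeta_{i,i+n-1}=q^{n-2-c_{i-1}}$ for the last row, spells out what the paper leaves implicit.
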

\vskip.2cm
\begin{example}\label{ex:q_CCF}
Figure \ref{fig:q_CCF} shows the $q$-CCF $\CCF_q(\T)$ associated with the triangulation $\T$ considered in Example \ref{ex:triangular}, where $\langle5\rangle,\;\{5\},\;
\langle7\rangle,\;\{7\},\;\langle8\rangle,\;\langle11\rangle\in\Z[q]$ are defined by 
\begin{align}
&\langle5\rangle=1+q+2q^2+q^3\,,&&\{5\}=1+2q+q^2+q^3\,,\nonumber\\
&\langle7\rangle=1+q+2q^2+2q^3+q^4\,,&&\{7\}=1+2q+2q^2+q^3+q^4\,,\label{eqn:modified_q-int}\\
&\langle8\rangle=1+2q+2q^2+2q^3+q^4\,,&&\langle11\rangle=1+2q+2q^2+3q^3+2q^4+q^5\,.
\nonumber
\end{align}
%\newpage
\begin{figure}[htbp]
\begin{minipage}{\textwidth}
\begin{center}
\def\arraystretch{1.3}
\begin{tabularx}{15cm}{XXXXXXXXXXXXXXXXXXXX}
&$0$& &$0$& &$0$& &$0$&&$0$&&$0$&&$0$&&$0$&&$0$&&$0$\\
$1$& &$1$& &$1$& &$1$&&$1$&&$1$&&$1$&&$1$&&$1$&&$1$&\\
&$1$& &$[4]$& &$[2]$& &$1$&&$[3]$&&$[4]$&&$1$&&$[2]$&&$[3]$& & $1$\\
$[2]$&&$q[3]$&&$\{7\}$&&$1$&&$q[2]$&&$\langle11\rangle$&&$[3]$&&$q$&&$\langle5\rangle$&&$[2]$\\
&$q\langle5\rangle$&&$q\{5\}$&&$[3]$&&$q^2$&&$q\langle7\rangle$&&$\langle8\rangle$&&$q[2]$&&$q^2[2]$&&$[3]$&&$q\langle5\rangle$\\
$q\langle7\rangle$&&$q\langle8\rangle$&&$q[2]$&&$q^2[2]$&&$q^3[3]$&&$q\langle5\rangle$&&$q\{5\}$&&$q^2[3]$&&$q^2$&&$q\langle7\rangle$&\\
&$q\langle11\rangle$&&$q[3]$&&$q^3$&&$q^3\langle5\rangle$&&$q^3[2]$&&$q^2[3]$&&$q^2\{7\}$&&$q^2$&&$q^4[2]$&&$q\langle11\rangle$\\
$q^4[3]$&&$q[4]$&&$q^3$&&$q^5[2]$&&$q^3[3]$&&$q^4$&&$q^3[4]$&&$q^2[2]$&&$q^5$&&$q^4[3]$&\\
&$q^4$&&$q^3$&&$q^6$&&$q^5$&&$q^4$&&$q^6$&&$q^3$&&$q^5$&&$q^6$&&$q^4$\\
$0$& &$0$& &$0$& &$0$&&$0$&&$0$&&$0$&&$0$&&$0$&&$0$&\\
&$-q^6$& &$-q^6$& &$-q^6$& &$-q^6$&&$-q^6$&&$-q^6$&&$-q^6$&&$-q^6$&&$-q^6$&&$-q^6$
\end{tabularx}
\def\arraystretch{1}
\vskip.5cm
\caption{$q$-CCF associated with the triangulation $\T$ with 
$\quid(\T)=(\dots,1,4,2,1,3,4,1,2,3,\dots)$}\label{fig:q_CCF}
\end{center}
\end{minipage}
\end{figure}
\par
%\newpage
For example, $q$-continued fraction $[[c_3,c_4,c_5,c_6,c_7]]_q=[[1,3,4,1,2]]_q$ is computed as follows: 
\begin{align*}
[[1,3,4,1,2]]_q=1-\dfrac{1}
{[3]-\dfrac{q^2}{
[4]-\dfrac{q^3}{
1-\dfrac{1}
{[2]}}
}}
=1-\dfrac{1}
{[3]-\dfrac{q^2}{
[4]-\dfrac{q^3}{\quad
\dfrac{q}{[2]}\quad
}
}}
=1-\dfrac{1}{[3]-\dfrac{q^2}{\quad\dfrac{q[2]}{q}\quad}}
%&=1-\dfrac{1}{\quad\dfrac{q\{5\}}{q[2]}}\\
%&
=\cdots=\dfrac{q^2[3]}{q\{5\}}\,.
\end{align*}
By Figure \ref{fig:q_CCF}, 
$\zeta_{2,8}=q^2[3]$, $\zeta_{3,8}=q\{5\}$, and therefore, 
$[[c_3,c_4,c_5,c_6,c_7]]_q=[[1,3,4,1,2]]_q=\dfrac{\zeta_{2,8}}{\zeta_{3,8}}$. 
\end{example}
\subsection{\texorpdfstring{$q$}{\it q}-Farey labelings}\label{sect:q_Farey}
In this section, we define a $q$-analogue of the Farey labeling defined 
in Definition \ref{def:Farey_graph}, following Morier-Genoud and Ovsienko \cite[Definition 2.7, Definition 2.9]{morier2020q}. 
\begin{defi}[$q$-Farey sum {\cite[Definition 2.7, Definition 2.9]{morier2020q}}]\label{def:weighted_Farey_sum}
Let $\PP^1(\Q(q))=\Q(q)\cup\left\{\dfrac{1}{0}\right\}$. 
For two rational functions $\theta_{\alpha}=\dfrac{r_{\alpha}}{s_{\alpha}}$, 
$\theta_{\beta}=\dfrac{r_{\beta}}{s_{\beta}}\in\PP^1(\Q(q))$, 
suppose that the weight $w(\theta_{\alpha},\theta_{\beta})$ of the pair $(\theta_{\alpha},\theta_{\beta})$ 
is given by $w(\theta_{\alpha},\theta_{\beta})=q^{\ell-1}$ for some $\ell\in\Z$. 
The $q$-Farey sum of $\theta_{\alpha}$ and $\theta_{\beta}$ is defined by
\begin{align}\label{eqn:weighted_Farey_sum}
\theta_{\alpha}\oplus_q \theta_{\beta}=\dfrac{r_{\alpha}+q^\ell r_{\beta}}{s_\alpha+q^\ell s_\beta}\,.
\end{align}
For $\theta_{\gamma}=\theta_{\alpha}\oplus_q \theta_{\beta}$, 
the weights $w(\theta_{\gamma},\theta_{\alpha})$, 
$w(\theta_{\beta},\theta_{\gamma})$ 
of $(\theta_{\gamma},\theta_{\alpha})$, 
$(\theta_{\beta},\theta_{\gamma})$ are defined by
\begin{align*}
w(\theta_{\gamma},\theta_{\alpha})=1\,,\quad 
w(\theta_{\beta},\theta_{\gamma})=q^\ell\,,
\end{align*}
respectively. The triple 
$(w(\theta_{\alpha},\theta_{\beta}),\;w(\theta_{\gamma},\theta_{\alpha}),\;w(\theta_{\beta},\theta_{\gamma}))=(q^{\ell-1},1,q^{\ell})$ is called the weight triple of 
$(\theta_{\alpha},\theta_{\beta},\theta_{\gamma})$. 
\end{defi}
\begin{defi}[$q$-Farey labelings {\cite[Definition 2.7, Definition 2.9]{morier2020q}}]\label{def:weighted_Farey_graph}
Let $(\T,i)$ be a triangulation of an $n$-gon. 
As in Definition \ref{def:Farey_graph}, for each vertex $v_j$ with $i\leq j\leq i+n-1$, 
we define a rational function $\theta_{i,j}\in\PP^1(\Q(q))$ 
by replacing the Farey sum with the $q$-Farey sum. 
For a cell ${\mathcal C}=v_{\alpha}v_{\beta}v_{\gamma}$ with the base side 
$v_{\alpha}v_{\beta}$, 
we identify the weight triple of 
$(\theta_{i,\alpha},\theta_{i,\beta},\theta_{i,\gamma})$ with 
the side-weight triple 
$(w(v_{\alpha}v_{\beta}),\,w(v_{\gamma}v_{\alpha}),\,
w(v_{\beta}v_{\gamma}))$ and denote it by $w({\mathcal C})$.  
\par
The labels of the endpoints of the base edge $v_{i+n-1}v_i$, are defined by 
$\theta_{i,i+n-1}=\dfrac{0}{1}$ and $\theta_{i,i}=\dfrac{1}{0}$ and 
we set $w(v_{i+n-1}v_i)=q^{-1}$. 
If ${\mathcal C}=v_{i+n-1}v_iv_u$ is the base cell, then 
\begin{align*}
\theta_{i,u}=\theta_{i,i+n-1}\oplus_q \theta_{i,i}=\dfrac{0+1}{1+0}=\dfrac{1}{1}\;,\quad 
w({\mathcal C})=(q^{-1},1,1)\,.
\end{align*}
Suppose that the vertices of a cell have been labeled by rational functions. 
Let $v_{\alpha}v_{\beta}$ be the base side of its child cell 
${\mathcal C}'=v_{\alpha}v_{\beta}v_{\gamma}$ 
with weight $w(v_{\alpha}v_{\beta})=q^{\ell-1}$. 
If $\theta_{i,\alpha}=\dfrac{r_{i,\alpha}}{s_{i,\alpha}}$, 
$\theta_{i,\beta}=\dfrac{r_{i,\beta}}{s_{i,\beta}}$, 
then the label $\theta_{i,\gamma}$ 
of the remaining vertex $v_{\gamma}$ is defined by 
\begin{align*}
\theta_{i,\gamma}=\theta_{i,\alpha}\oplus_q \theta_{i,\beta}
=\dfrac{r_{i,\alpha}+q^\ell r_{i,\beta}}{s_{i,\alpha}+q^\ell s_{i,\beta}}\,,
\end{align*}
where fractions are not reduced, and 
we set $w({\mathcal C}')=(q^{\ell-1},1,q^{\ell})$ 
(see Figure \ref{fig:wedge_weight}). 
\par
The sequence $(\theta_{i,j})_{i\leq j\leq i+n-1}$ is called 
the $q$-Farey labeling of $(\T,i)$ and is denoted by $\Farey_q(\T,i)$. 
Its anti-periodic extension 
$\Farey_q(\T,i)^{\per}=(\theta_{i,u})_{u\in\Z}$ is 
defined by 
\begin{align}
\theta_{i,u}=\dfrac{r_{i,u}}{s_{i,u}}=\dfrac{(-1)^{t}r_{i,j}}{(-1)^ts_{i,j}}
\label{eqn:q_Farey_graph_infty}
\end{align}
for any $u\in\Z$ with $u=j+tn$ and $i\leq j\leq i+n-1$. 
\end{defi}
\par
\begin{figure}[htbp]
\begin{center}
\begin{tikzpicture}[scale=.55]
% 点の定義
\coordinate(v1)at(0,0);
\coordinate(v2)at(2,-5);
\coordinate(v3)at(4,0);
\filldraw [black] (v1) circle (1pt) node[above]{$v_{\alpha}\,(\theta_{i,\alpha})$};
\filldraw [black] (v2) circle (1pt) node[below]{$\quad v_{\gamma}\,(\theta_{i,\gamma})$};
\filldraw [black] (v3) circle (1pt) node[above]{$\quad v_{\beta}\,(\theta_{i,\beta})$};
\draw (v1)--(v2)--(v3)--(v1);
%\draw (v3)--(v4);
%\draw[dashed] (v4)--(v5);
%\draw (v1)--(v7);
%\draw[dashed] (v7)--(v8);
\node at(2,-2) {${\mathcal C}$};
\node at(2.2,.6) {$q^{\ell-1}$};
\node at(.5,-2.5) {$1$};
\node at(3.5,-2.5) {$q^{\ell}$};
\end{tikzpicture}
\caption{Side-weight triple of a cell ${\mathcal C}$}\label{fig:wedge_weight}
\end{center}
\end{figure}
\par
\newpage
\begin{example}\label{ex:q_CCF_Farey_graph}
Consider the triangulation in Example \ref{ex:triangular},  
we take the edge $v_2v_3=v_{11}v_3$ as the base edge. 
The $q$-Farey labeling 
$\Farey_q(\T,3)=(\theta_{3,j})_{3\leq j\leq 11}$ of $(\T,3)$ is given 
in Figure \ref{fig:q_CCF_Farey_graph0}. 
The polynomials $\langle5\rangle,\;\{5\},\;
\langle7\rangle,\;\{7\},\;\langle8\rangle,\;\langle11\rangle\in\Z[q]$ are 
defined in $(\ref{eqn:modified_q-int})$. 
Weights are indicated beside each side. 
\par
\begin{figure}[htbp]
\begin{center}
\begin{tikzpicture}[scale=.8]
\coordinate(v0)at(0,0);
\coordinate(v1)at(-3,0);
\coordinate(v2)at(-5,2);
\coordinate(v3)at(-5,4.5);
\coordinate(v4)at(-3.5,6.5);
\coordinate(v5)at(-1,7);
\coordinate(v6)at(1.5,6);
\coordinate(v7)at(2.5,4);
\coordinate(v8)at(2.2,2);
\filldraw [black] (v0) circle (1pt) node[below]{
\hspace{2cm}$v_9\;\left(\theta_{3,9}=\dfrac{q[4]}{\{7\}}\right)$};
\filldraw [black] (v1) circle (1pt) node[below]{
\hspace{.5cm}$v_{10}\;\left(\theta_{3,10}=\dfrac{q}{[2]}\right)$};
\filldraw [black] (v2) circle (1pt) node[below left]{
\hspace{1cm}$v_{11}\;\left(\theta_{3,11}=\dfrac{0}{1}\right)$};
\filldraw [black] (v3) circle (1pt) node[left]{
\hspace{1cm}$v_3\;\left(\theta_{3,3}=\dfrac{1}{0}\right)$};
\filldraw [black] (v4) circle (1pt) node[above]{
\raisebox{.5cm}{\hspace{-.3cm}$v_4\;\left(\theta_{3,4}=\dfrac{1}{1}\right)$}};
\filldraw [black] (v5) circle (1pt) node[above]{
\hspace{2cm}$v_5\;\left(\theta_{3,5}=\dfrac{q[2]}{[3]}\right)$};
\filldraw [black] (v6) circle (1pt) node[right]{
\raisebox{1cm}{$v_6\;\left(\theta_{3,6}=\dfrac{q\langle7\rangle}{\langle11\rangle}\right)$}};
\filldraw [black] (v7) circle (1pt) node[right]{
$v_7\;\left(\theta_{3,7}=\dfrac{q\langle5\rangle}{\langle8\rangle}\right)$};
\filldraw [black] (v8) circle (1pt) node[right]{
$v_8\;\left(\theta_{3,8}=\dfrac{q[3]}{\{5\}}\right)$};
\draw (v0)--(v1) node[midway, above]{$1$};
\draw (v1)--(v2) node[midway, above]{$1$};
\draw (v1)--(v4) node[midway, right]{$q$};
\draw (v1)--(v5)node[midway, right]{$1$};
\draw (v1)--(v8)node[midway, above left]{$1$};
\draw (v2)--(v3)node[midway, left]{$q^{-1}$};
\draw (v2)--(v4)node[midway, left]{$1$};
\draw (v3)--(v4)node[midway, above]{$1$};
\draw (v4)--(v5)node[midway, above]{$\hspace{.5cm}q^2$};
\draw (v5)--(v6)node[midway, above]{$q^3$};
\draw (v5)--(v7)node[midway, above]{$q^2$};
\draw (v5)--(v8)node[midway, above]{$q$};
\draw (v6)--(v7)node[midway, right]{$1$};
\draw (v7)--(v8)node[midway, right]{$1$};
\draw (v8)--(v0)node[midway, right]{$q$};
\end{tikzpicture}
\caption{$q$-Farey labeling of the triangulation $(\T,3)$ 
with quiddity
$\quid(\T)=(\dots,1,4,2,1,3,4,1,2,3,
\protect\\
\dots)$
}\label{fig:q_CCF_Farey_graph0}
\end{center}
\end{figure}
\end{example}
The following Proposition \ref{prop:weighted_Farey_graph_degree} 
establishes positivity of the coefficients, as well as unit trailing and leading coefficients 
for the numerators and denominators of $\theta_{i,j}$ in the 
$q$-Farey labeling. 
Later, in Theorem \ref{prop:q_CCF_weighted_Farey}, we show that 
the entries of the $q$-CCF coincide with the numerators and denominators of $\theta_{i,j}$ up to explicit powers of $q$. 
As shown in Propositions \ref{prop:q_CCF_q_rational} 
and \ref{prop:q-CCF_n-1}, the entries of the $q$-CCF 
are precisely the numerators and denominators of the associated $q$-continued fractions. 
Consequently, these numerators and denominators share the same degree and coefficient properties 
as those of $\theta_{i,j}$ (see Proposition \ref{prop:coprime} (2)). 
Morier-Genoud and Ovsienko 
\cite[Proposition 1.6 (i)--(iii), Corollary 1.7 (ii), (iii)]{morier2020q} 
proved analogous results for continued fractions $\dfrac{r}{s}=[[c_1,c_2,\dots,c_k]]$ 
with $c_u\geq 2$, corresponding to triangulations with exactly two exterior cells. 
Our results extend these correspondences 
to arbitrary subsequences of quiddities arising from general triangulations.
\par
Moreover, Morier-Genoud and Ovsienko \cite[Proposition 1.6 (iv)]{morier2020q} showed that 
the numerator and denominator of $\theta_{i,j}$ in the $q$-Farey labeling are coprime. 
The same coprimality property holds for arbitrary triangulations.  
Furthermore, the greatest common divisor 
of the corresponding pairs of entries of the $q$-CCF is a power of $q$. 
These results are collected in Proposition \ref{prop:coprime} (1). 
\par
Let $\deg_{\min}(r)$ and $\deg(r)$ denote the trailing and leading degrees of a polynomial 
$r\in\Z[q]$, respectively.  
\begin{prop}\label{prop:weighted_Farey_graph_degree} 
Let $\theta_{i,j}=\dfrac{r_{i,j}}{s_{i,j}}$ be a rational function 
in the $q$-Farey labeling of a triangulation $(\T,i)$ of an $n$-gon. 
For any $j\in\Z$ with $i+1\leq j\leq i+n-2$, the following hold: 
\begin{enumerate}[{(}1{)}]
\item The polynomials $r_{i,j}$ and $s_{i,j}$ have trailing and leading coefficients equal to $1$, 
and all intermediate coefficients are positive.  
\item $\deg_{\min}(s_{i,j})=0$.  
\end{enumerate}
\end{prop}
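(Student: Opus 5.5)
We prove (1) and (2) together by induction along the recursive construction of the $q$-Farey labeling, that is, by induction on the level of the cell in which a vertex first receives its label. The stronger invariant we carry is the following: for every labeled vertex $v_j$ with $i+1\leq j\leq i+n-2$, the polynomials $r_{i,j}$ and $s_{i,j}$ are nonzero, have nonnegative coefficients with no internal zero coefficient, have trailing and leading coefficients equal to $1$, and satisfy $\deg_{\min}(s_{i,j})=0$. The boundary labels $\theta_{i,i}=\frac{1}{0}$ and $\theta_{i,i+n-1}=\frac{0}{1}$ are exactly the excluded vertices. They enter the induction only as summands, and there we use that $1$, $0$ trivially satisfy ``coefficients in $\{0,1\}$''.

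For the base step, the apex of the base cell has $\theta_{i,u}=\frac11$, which obviously satisfies the invariant.

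For the inductive step, take a child cell $v_\alpha v_\beta v_\gamma$ whose base side $v_\alpha v_\beta$ has weight $q^{\ell-1}$. Then
\[
r_{i,\gamma}=r_{i,\alpha}+q^\ell r_{i,\beta},\qquad s_{i,\gamma}=s_{i,\alpha}+q^\ell s_{i,\beta}.
\]

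The first step is to record what the weights are. From the weight rule $(q^{\ell-1},1,q^\ell)$, every side weight is $q^{m}$ with $m\geq -1$, and $m=-1$ occurs only on the base edge. Hence the exponent $\ell\geq 0$ at every child cell, and $\ell=0$ only at the base cell. So at a genuine child step $\ell\geq1$.

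The second step is the support description. Sums of polynomials with nonnegative coefficients keep nonnegativity. The trailing term of $r_{i,\gamma}$ comes from $r_{i,\alpha}$ (or from $q^\ell r_{i,\beta}$ if $r_{i,\alpha}=0$). The leading term needs a degree comparison, which is the third step.

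The third step, and the main obstacle, is controlling degrees so that the leading coefficient stays $1$ and no gaps appear. I would strengthen the invariant with an explicit degree formula for each side $v_\alpha v_\beta$ of weight $q^{\ell-1}$:
\[
\deg r_{i,\beta}+\ell=\deg r_{i,\alpha}+1\quad\text{and}\quad \deg s_{i,\beta}+\ell=\deg s_{i,\alpha}+1,
\]
whenever the polynomials involved are nonzero. Equivalently, $q^\ell r_{i,\beta}$ starts at most one degree above the top of $r_{i,\alpha}$ and ends strictly above it. Then:

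\begin{itemize}
\item The sum has no internal gap. Since $q^\ell r_{i,\beta}$ begins at degree $\ell+\deg_{\min} r_{i,\beta}\leq \deg r_{i,\alpha}+1$, its support overlaps or abuts the support of $r_{i,\alpha}$.
\item The leading coefficient is that of $q^\ell r_{i,\beta}$, namely $1$, and the trailing coefficient is that of $r_{i,\alpha}$, namely $1$.
\end{itemize}

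I must then verify that the degree relations propagate to the two new sides, of weights $1$ and $q^\ell$, using the new degrees $\deg r_{i,\gamma}=\deg r_{i,\beta}+\ell$ and $\deg_{\min}r_{i,\gamma}=\deg_{\min} r_{i,\alpha}$. I expect this to follow from the determinant identity
\[
r_{i,\alpha}s_{i,\beta}-s_{i,\alpha}r_{i,\beta}=q^{\,\text{something}},
\]
which is the $q$-analogue of unimodularity and is preserved by $\oplus_q$. Comparing top and bottom degrees in this identity forces the needed overlap. If this bookkeeping becomes messy, a fallback is to use Proposition \ref{prop:q_CCF_q_rational}: express $r_{i,j}$ and $s_{i,j}$ as $q$-power multiples of $\zeta_{i-1,j}$ and $\zeta_{i,j}$. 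The proof would then reduce to the recurrence $\zeta_{i-1,j+1}=[c_j]\zeta_{i-1,j}-q^{c_{j-1}-1}\zeta_{i-1,j-1}$, but the subtraction there makes positivity harder, so I would prefer the Farey route.

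For (2), the trailing coefficient of $s_{i,\gamma}$ equals that of $s_{i,\alpha}$, since $\ell\geq1$ pushes $q^\ell s_{i,\beta}$ upward. Following the $\alpha$-ancestors back, we end either at $s=1$ (from $\theta_{i,u}=\frac11$) or at $s_{i,i+n-1}=1$. In both cases $\deg_{\min}=0$. The one case to treat separately is $s_{i,\alpha}=0$, i.e.\ $v_\alpha=v_i$. Orientation shows that $v_i$ only ever occurs as the $v_\beta$-endpoint of a base side, namely of the side $v_iv_u$ of weight $1$, and I would check this directly from the clockwise convention.
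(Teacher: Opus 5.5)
Your overall strategy matches the paper's: induct over the cell tree. At a cell $v_\alpha v_\beta v_\gamma$ whose base side has weight $q^{\ell-1}$ ($\ell\ge 1$), you show that $r_{i,\alpha}$ and $q^\ell r_{i,\beta}$ (and likewise $s_{i,\alpha}$, $q^\ell s_{i,\beta}$) have overlapping or abutting supports, with $r_{i,\alpha}$ alone at the bottom and $q^\ell r_{i,\beta}$ alone at the top. Your argument for (2) is fine. The gap is in the degree invariant for (1).

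\textbf{The proposed equality is false.} The equality $\deg r_{i,\beta}+\ell=\deg r_{i,\alpha}+1$ already fails in the paper's example $(\T,3)$. The cell $v_{10}v_5v_8$ has base side $v_{10}v_5$ of weight $1$, so $(\alpha,\beta)=(10,5)$ and $\ell=1$. With $r_{3,10}=q$ and $r_{3,5}=q[2]$ you get $\deg r_{3,5}+1=3\neq 2=\deg r_{3,10}+1$; the same happens for $s_{3,5}=[3]$, $s_{3,10}=[2]$.

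The failure is systematic. The child across the weight-$1$ side $v_\gamma v_\alpha$ has $(\alpha',\beta',\ell')=(\alpha,\gamma,1)$. The very conclusion you draw at the parent, $\deg r_{i,\gamma}=\deg r_{i,\beta}+\ell>\deg r_{i,\alpha}$, rules the equality out there. So your ``equivalently'' is not an equivalence, and only inequalities can propagate.

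\textbf{The trailing coefficient is not justified.} You assert that the trailing term of $r_{i,\gamma}$ comes from $r_{i,\alpha}$ alone. This requires $\deg_{\min} r_{i,\alpha}<\ell+\deg_{\min} r_{i,\beta}$, which is not automatic because $\deg_{\min} r_{i,\alpha}$ can be positive. For example, in the cell $v_{10}v_4v_5$ you compare $r_{3,10}=q$ against $q^2 r_{3,4}=q^2$. Equal trailing degrees would produce coefficient $2$.

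\textbf{The determinant identity does not close the gap.} Its exponent is not a function of the side weight: it is multiplied by $q^{\ell}$ when passing to the child $(\alpha,\gamma)$ and is unchanged for $(\gamma,\beta)$. So comparing degrees in it does not directly yield the $\ell$-dependent inequalities you need.

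\textbf{The fix.} The missing ingredient is the correct invariant, which is the paper's Lemma. Write $m,d$ for the trailing and leading degrees of numerators and $d'$ for the leading degree of denominators, and require on every base side
\[
m_\alpha\le m_\beta+\ell-1\le d_\alpha\le d_\beta+\ell-1,\qquad \ell-1\le d'_\alpha\le d'_\beta+\ell-1 .
\]
These give unit trailing and leading coefficients and no internal gaps. They also give $m_\gamma=m_\alpha$, $d_\gamma=d_\beta+\ell$, and $d'_\gamma=d'_\beta+\ell$. Substituting these values into the same inequalities for the two children, $(\alpha',\beta',\ell')=(\alpha,\gamma,1)$ and $(\gamma,\beta,\ell+1)$, is a direct check, and no determinant is needed. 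Your equality does hold for the second child, but not for the first.

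The zero labels $1/0$ and $0/1$ are handled by computing the two fans explicitly, as the paper does. Around $v_i$ the labels are $[b]/1$, and around $v_{i+n-1}$ they are $q^{a}/[a+1]$; the first cells off these fans satisfy the inequalities by inspection. Note also that $v_i$ is the $\beta$-endpoint of every fan side $v_xv_i$, not only of $v_iv_u$.
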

\noindent
Note that, in the above proposition,  we exclude the cases $j=i$ and $j=i+n-1$, 
since these cases imply trivial results; that is 
$\theta_{i,i}=\dfrac{1}{0}$ ($r_{i,i}=1,s_{i,i}=0$) if $j=i$, 
and $\theta_{i,i+n-1}=\dfrac{0}{1}$ ($r_{i,i+n-1}=0,s_{i,i+n-1}=1$) if $j=i+n-1$. 
\begin{proof} We abbreviate $\theta_{i,j}$, $r_{i,j}$, and $s_{i,j}$ as $\theta_j$, 
$r_j$, and $s_j$, respectively. 
\par
First, suppose that $j=i+n-2$. Following Definition \ref{def:desc_deg}, 
let $\cell(v_{i+n-1})=\{\widetilde{\mathcal C}_0,
\widetilde{\mathcal C}_1,\dots,\widetilde{\mathcal C}_{c_{i-1}-1}\}$. 
The cell $\widetilde{\mathcal C}_{c_{i-1}-1}$ is the base cell 
with side-weight triple $(q^{-1},1,1)$, while the other cells 
have side-weight triple $(1,1,q)$. 
The labeling procedure for the $q$-Farey labeling 
is identical to that for the classical Farey labeling, except for the weights. 
Thus, by induction along these cells, the vertex 
$v_{i+n-2}$ is labeled by $\theta_{i+n-2}=\dfrac{q^{c_{i-1}-1}}{[c_{i-1}]}$. 
Hence, the results (1) and (2) hold. 
\par
Next, suppose $j=i+1$. 
Let $\cell(v_i)=\{{\mathcal C}_0,{\mathcal C}_1,\dots,{\mathcal C}_{c_i-1}\}$. 
The cell ${\mathcal C}_0$ is the base cell, and 
the other cell ${\mathcal C}_b$ with $1\leq b\leq c_i-1$ has the side-weight triple 
$(q^{b-1},1,q^b)$. Similarly, by induction along these cells, 
the vertex $v_{i+1}$ is labeled by $\theta_{i+1}=\dfrac{[c_i]}{1}$, 
so the results (1) and (2) hold. 
\par
The remaining vertices $v_j$ lie in descendant cells whose ancestor cell is 
$\widetilde{\mathcal C}_{a}$ with $0\leq a\leq c_{i-1}-2$ or 
${\mathcal C}_{b}$ with $1\leq b\leq c_i-1$.  
In each case, the labeling of the new vertex is obtained 
from two previously labeled vertices via a cell with side-wight triple 
$(q^{\ell-1},1,q^{\ell})$ for some $\ell\geq1$. 
Therefore, by Lemma \ref{lem:wFg_deg_posi}, 
the results (1) and (2) hold for all descendant cells. 
This completes the proof of Proposition \ref{prop:weighted_Farey_graph_degree}. 
\par
For each rational function $\theta_{j}=\dfrac{r_j}{s_j}$, we denote 
$m_j=\deg_{\min}(r_j)$, $d_j=\deg(r_j)$, $m'_j=\deg_{\min}(s_j)$, and $d'_j=\deg(s_j)$.  
\begin{lemm}\label{lem:wFg_deg_posi} 
If the numerators and denominators of $\theta_{\alpha}$ and $\theta_{\beta}$ for the vertices 
$v_{\alpha}$ and $v_{\beta}$ in the $q$-Farey labeling satisfy conditions (1) and (2) 
of Proposition \ref{prop:weighted_Farey_graph_degree}, 
and if the cell ${\mathcal C}=v_{\alpha}v_{\beta}v_{\gamma}$ 
with base side $v_{\alpha}v_{\beta}$ has 
the side-weight triple $w({\mathcal C})=(q^{\ell-1},1,q^{\ell})$ $(\ell\geq 1)$ such that
\begin{align}\label{eqn:wFg_deg_posi1}
m_{\alpha}\leq m_{\beta}+\ell-1\leq d_{\alpha}\leq d_{\beta}+\ell-1\,,\quad 
\ell-1\leq d'_{\alpha}\leq d'_{\beta}+\ell-1, 
\end{align}
then $\theta_{\gamma}$ also satisfies conditions (1) and (2), and  
\begin{align}\label{eqn:wFg_deg_posi2}
&m_{\alpha}\leq m_{\gamma}\leq d_{\alpha}\leq d_{\gamma}\,,\quad 
0\leq d'_{\alpha}\leq d'_{\gamma}\,,\\
&m_{\gamma}\leq m_{\beta}+\ell\leq d_{\gamma}\leq d_{\beta}+\ell\,,\quad 
\ell\leq d'_{\gamma}\leq d'_{\beta}+\ell\,. \nonumber
\end{align}
Note that, since the child cells with the base edges $v_{\alpha}v_{\gamma}$ 
(resp.~$v_{\gamma}v_{\beta}$) have side-weight triples $(1,1,q)$ 
(resp.~$(q^\ell,1,q^{\ell+1})$), the inequalities $(\ref{eqn:wFg_deg_posi2})$ imply 
that these child cells satisfy the same inequalities as in $(\ref{eqn:wFg_deg_posi1})$.
\end{lemm}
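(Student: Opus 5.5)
The plan is a direct computation from the $q$-Farey sum. By definition,
\[
r_\gamma=r_\alpha+q^\ell r_\beta,\qquad s_\gamma=s_\alpha+q^\ell s_\beta .
\]
Since every coefficient involved is nonnegative, no cancellation can occur. Hence the support of $r_\gamma$ (the set of exponents with nonzero coefficient) is the union of the support of $r_\alpha$ and the support of $r_\beta$ shifted by $\ell$, and coefficients add. From this,
\[
m_\gamma=\min(m_\alpha,m_\beta+\ell),\qquad d_\gamma=\max(d_\alpha,d_\beta+\ell),
\]
and similarly for the denominators. Because $\ell\ge1$ and $m'_\alpha=m'_\beta=0$ by condition (2), we get $m'_\gamma=0$. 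This settles condition (2) for $\theta_\gamma$.

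For condition (1) I will examine the coefficients one by one. The trailing coefficient of $r_\gamma$ equals $1$ provided the minimum defining $m_\gamma$ is attained by exactly one of the two summands. The hypothesis $m_\alpha\le m_\beta+\ell-1$ gives $m_\alpha<m_\beta+\ell$, so $m_\gamma=m_\alpha$ and the trailing coefficient is that of $r_\alpha$, namely $1$. Likewise $d_\alpha\le d_\beta+\ell-1<d_\beta+\ell$ gives $d_\gamma=d_\beta+\ell$, with leading coefficient $1$. The denominators work the same way: $m'_\gamma=0$ comes only from $s_\alpha$, since $q^\ell s_\beta$ starts in degree $\ell\ge1$, and $d'_\alpha\le d'_\beta+\ell-1$ gives $d'_\gamma=d'_\beta+\ell$, coming only from $q^\ell s_\beta$.

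The main point is positivity of all intermediate coefficients, meaning the absence of internal gaps. The support of $r_\alpha$ is the full interval $[m_\alpha,d_\alpha]$ and the support of $q^\ell r_\beta$ is $[m_\beta+\ell,d_\beta+\ell]$. Their union is the interval $[m_\alpha,d_\beta+\ell]$ exactly when the two intervals overlap or abut, i.e.\ $m_\beta+\ell\le d_\alpha+1$. This is exactly the middle inequality $m_\beta+\ell-1\le d_\alpha$ of (\ref{eqn:wFg_deg_posi1}). For denominators the intervals are $[0,d'_\alpha]$ and $[\ell,d'_\beta+\ell]$, and the condition $\ell-1\le d'_\alpha$ gives the same conclusion. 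This step is where the hypotheses are really used, so I will check the endpoints carefully.

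Finally I will read off (\ref{eqn:wFg_deg_posi2}) from the formulas $m_\gamma=m_\alpha$, $d_\gamma=d_\beta+\ell$ and $d'_\gamma=d'_\beta+\ell$. The inequalities $m_\alpha\le m_\gamma\le d_\alpha\le d_\gamma$ are immediate. For the second line, $m_\gamma=m_\alpha\le m_\beta+\ell$ and $m_\beta+\ell\le d_\beta+\ell=d_\gamma$, using $m_\beta\le d_\beta$. Also $0\le d'_\alpha\le d'_\beta+\ell-1<d'_\gamma$, and $\ell\le d'_\gamma$ since $d'_\beta\ge0$.

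For the closing remark, I substitute the data of the child cells. For the cell with base side $v_\alpha v_\gamma$ and weight $1$ (so $\ell$ is replaced by $1$), (\ref{eqn:wFg_deg_posi1}) becomes $m_\alpha\le m_\gamma\le d_\alpha\le d_\gamma$ and $0\le d'_\alpha\le d'_\gamma$, which is the first line of (\ref{eqn:wFg_deg_posi2}). For the cell with base side $v_\gamma v_\beta$ and weight $q^\ell$ (so $\ell$ is replaced by $\ell+1$), it becomes the second line. This closes the induction used in the proof of Proposition \ref{prop:weighted_Farey_graph_degree}.
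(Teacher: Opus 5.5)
Your proposal is correct and follows the paper's proof essentially step for step. Like the paper, you derive $m_\gamma=m_\alpha$, $d_\gamma=d_\beta+\ell$, $m'_\gamma=0$ and $d'_\gamma=d'_\beta+\ell$ from the $q$-Farey sum, take the unit end coefficients from the unique contributing summand, use $m_\beta+\ell\le d_\alpha+1$ (resp.\ $\ell\le d'_\alpha+1$) for positivity of the intermediate coefficients, and then read off the inequalities; your interval-of-supports formulation of the no-gap step and your explicit check of the closing remark via $\ell\mapsto 1$ and $\ell\mapsto \ell+1$ simply spell out what the paper states briefly.
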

\par
\noindent
\begin{proof}[Proof of Lemma {\rm\ref{lem:wFg_deg_posi}}]
Since the cell ${\mathcal C}=v_{\alpha}v_{\beta}v_{\gamma}$ has the side-weight triple 
$(q^{\ell-1},1,q^{\ell})$, $\theta_{\gamma}$ is given by 
\begin{align*}
\theta_{\gamma}=\theta_{\alpha}\oplus_q \theta_{\beta}=\dfrac{r_{\alpha}+q^{\ell}r_{\beta}}{s_{\alpha}+q^{\ell}s_{\beta}}\,.
\end{align*}
The inequality $(\ref{eqn:wFg_deg_posi1})$ implies 
$m_{\alpha}+1\leq m_{\beta}+\ell$ and $d_{\alpha}+1\leq d_{\beta}+\ell$. Thus we have
\begin{align*}
&m_{\gamma}=\deg_{\min}r_{\gamma}=\deg_{\min}(r_{\alpha}+q^{\ell}r_{\beta})=
\min\{m_{\alpha},m_{\beta}+\ell\}=m_{\alpha}\,,\\
&d_{\gamma}=\deg r_{\gamma}=\deg(r_{\alpha}+q^{\ell}r_{\beta})=
\max\{d_{\alpha},d_{\beta}+\ell\}=d_{\beta}+\ell\,.
\end{align*}
Therefore, the trailing and leading coefficients of $r_{\gamma}$ 
are equal to the trailing coefficient of $r_{\alpha}$ and 
the leading coefficient of $q^{\ell}r_{\beta}$, respectively. 
From condition (1), these coefficients are equal to $1$. 
By the inequality $m_{\beta}+\ell \leq d_{\alpha}+1$ in $(\ref{eqn:wFg_deg_posi1})$, 
all intermediate coefficients of $r_{\gamma}$ are positive. 
Hence, the numerator $r_{\gamma}$ satisfies condition (1). 		
\par
Using \eqref{eqn:wFg_deg_posi1} together with 
the above equations $m_{\gamma}=m_{\alpha}$, $d_{\gamma}=d_{\beta}+\ell$, 
it follows that  
\begin{align*}
&m_{\gamma}=m_{\alpha}\leq d_{\alpha}\leq d_{\beta}+\ell-1\leq d_{\beta}+\ell=d_{\gamma}\,,\\
&m_{\gamma}=m_{\alpha}\leq m_{\beta}+\ell-1\leq m_{\beta}+\ell 
\leq d_{\beta}+\ell=d_{\gamma}\,.
\end{align*}
Therefore, the inequalities 
for $d_{\alpha},d_{\beta},d_{\gamma},m_{\alpha},m_{\beta}$, and $m_{\gamma}$ 
in $(\ref{eqn:wFg_deg_posi2})$ follow. 
\par
The argument for the denominator $s_{\gamma}$ is analogous to that 
for the numerator $r_{\gamma}$. 
Indeed, using condition (2), $m'_{\alpha}=m'_{\beta}=0$, 
and \eqref{eqn:wFg_deg_posi1}, we obtain
\begin{align*}
m'_{\gamma}=m'_{\alpha}=0\,,\quad d'_{\gamma}=d'_{\beta}+\ell,
\end{align*}
and the same reasoning as above shows that $s_{\gamma}$ satisfies conditions (1) and (2).
Moreover, the inequalities for $d'_{\alpha},d'_{\beta},d'_{\gamma}$ in
\eqref{eqn:wFg_deg_posi2} follow in the same way.
\if0
The inequality $(\ref{eqn:wFg_deg_posi1})$ implies 
$d'_{\alpha}+1\leq d'_{\beta}+\ell$ and, by condition (2), $m_\alpha=m_\beta=0$. 
Thus we have
\begin{align*}
&m'_{\gamma}=\deg_{\min}s_{\gamma}=\deg_{\min}(s_{\alpha}+q^{\ell}s_{\beta})=
\min\{m_{\alpha},m_{\beta}+\ell\}=m_{\alpha}=0\,,\\
&d'_{\gamma}=\deg s_{\gamma}=\deg(s_{\alpha}+q^{\ell}s_{\beta})=
\max\{d'_{\alpha},d'_{\beta}+\ell\}=d'_{\beta}+\ell\,.
\end{align*}
Therefore, the trailing and leading coefficients of $s_{\gamma}$ 
are equal to the trailing coefficient of $s_{\alpha}$ 
and the leading coefficient of $q^{\ell}s_{\beta}$. 
From condition (1), these coefficients are equal to $1$. 
By the inequality $\ell \leq d'_{\alpha}+1$ in \eqref{eqn:wFg_deg_posi1}, 
all intermediate coefficients of $s_{\gamma}$ are positive. Hence, the denominator 
$s_{\gamma}$ satisfies the condition (1). 
From the above equation $m'_{\gamma}=0$, $s_{\gamma}$ also satisfies the condition (2). 
\par
Moreover, using \eqref{eqn:wFg_deg_posi1} together with 
the above equations, $m'_{\gamma}=m_{\alpha}=0$, $d'_{\gamma}=d'_{\beta}+\ell$, 
it follows that
\begin{align*}
&0\leq \ell-1\leq d'_{\alpha}\leq d'_{\beta}+\ell-1\leq d'_{\beta}+\ell=d'_{\gamma}\,\\
&\ell\leq d'_{\alpha}+1\leq d'_{\beta}+\ell=d'_{\gamma}\,.
\end{align*}
Hence, the inequalities of $d'_{\alpha},d'_{\beta},d'_{\gamma}$ in 
\eqref{eqn:wFg_deg_posi2} hold. 
\fi
\end{proof}
\par
This completes the proof of Proposition \ref{prop:weighted_Farey_graph_degree}. 
\end{proof}
\subsection{Correspondence between 
the \texorpdfstring{$q$}{\it q}-CCF and the \texorpdfstring{$q$}{\it q}-Farey labeling}
Morier-Genoud and Ovsienko \cite[Theorem 3]{morier2020q} 
showed that, for triangulations with exactly two exterior cells, 
the numerators and denominators of 
$q$-continued fractions coincide with those appearing 
in the associated $q$-Farey labeling. 
For general triangulations, however, this exact correspondence no longer holds. 
As shown in Propositions \ref{prop:q_CCF_q_rational} 
and \ref{prop:q-CCF_n-1}, the entries of the $q$-CCF 
recover the numerators and denominators of the associated $q$-continued fractions. 
The following theorem shows that these entries differ from the corresponding polynomials 
in the $q$-Farey labeling only by explicit powers of $q$, determined by descendant degrees.
\begin{thm}\label{prop:q_CCF_weighted_Farey} 
Let $(\T,i)$ be a triangulation of an $n$-gon. 
Let $\CCF_q(\T)=\{\zeta_{k,\ell}\}_{(k,\ell)\in D_{n+1}}$ and $\Farey_q(\T,i)^{\per}=\Bigl(\theta_{i,u}=\dfrac{r_{i,u}}{s_{i,u}}\Bigr)_{u\in\Z}$ be the corresponding $q$-CCF and 
anti-periodic extension of the $q$-Farey labeling, respectively. 
For any $j\in\Z$ with $i-1\leq j\leq i+n-1$, we have 
\begin{align}\label{eqn:quiddity_Farey_graph}
&\zeta_{i-1,j+1}=q^{\ell_{i,j}}r_{i,j+1}\,,\quad \zeta_{i,j+1}=q^{\ell_{i,j}}s_{i,j+1}\,,
\end{align}
where the sequence $\{\ell_{i,j}\}_{i-1\leq j\leq i+n-1}$ is defined by  
\begin{align}\label{eqn:descendant_sum}
&\ell_{i,i-1}=0\,\quad 
\ell_{i,j}-\ell_{i,j-1}=\mu_{i,j}\quad (i\leq j\leq i+n-1), 
\end{align}
and $\mu_{i,j}$ denotes the descendant degree of $v_j$ 
(see Definition \ref{def:desc_deg}). 
In particular, $\ell_{i,i}=\ell_{i,i+1}=0$ since $\mu_{i,i}=\mu_{i,i+1}=0$. 
\end{thm}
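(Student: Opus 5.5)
Our plan is to prove \eqref{eqn:quiddity_Farey_graph} by induction on $j$, mirroring Case 1 in the proof of Proposition \ref{prop:3desect_cont_frac_CCF} (1), with the $q$-weights tracked. By Proposition \ref{prop:q_CCF_q_rational} (1) and Proposition \ref{prop:q-CCF_n-1}, the column vector $(\zeta_{i-1,j+1},\zeta_{i,j+1})^{T}$ is the first column of $M_q(c_i,\dots,c_j)$. Hence it satisfies the three-term recurrence
\[
\zeta_{i-1,j+1}=[c_j]\zeta_{i-1,j}-q^{c_{j-1}-1}\zeta_{i-1,j-1},
\]
and the same recurrence holds in the second row. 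We will show that the vector $q^{\ell_{i,j}}(r_{i,j+1},s_{i,j+1})$ obeys the same recurrence with the same initial data. The base cases are immediate: for $j=i-1$ we have $(\zeta_{i-1,i},\zeta_{i,i})=(1,0)$, which matches $\theta_{i,i}=1/0$; for $j=i$ we have $(\zeta_{i-1,i+1},\zeta_{i,i+1})=([c_i],1)$, which matches $\theta_{i,i+1}=[c_i]/1$, as computed in the proof of Proposition \ref{prop:weighted_Farey_graph_degree}. The end cases $j=i+n-2$ and $j=i+n-1$ can be checked directly against \eqref{eqn:q-mat_n-2}, \eqref{eqn:q-mat_n-1} and the anti-periodic extension. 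Here $\ell_{i,i+n-2}=n-2-c_{i-1}$ should follow from $\sum_j\mu_{i,j}$ counting every diagonal once, via \eqref{another_def:descendant}.

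The key step is a local identity at a vertex $v_j$ with $i+1\le j\le i+n-2$. We will prove
\[
q^{\mu_{i,j}}\,(r_{j+1},s_{j+1})=[c_j]\,(r_j,s_j)-q^{c_{j-1}-1}q^{-\mu_{i,j-1}}\cdots
\]
More precisely, we aim for the relation
\[
q^{\ell_{i,j}}\theta^{\mathrm{vec}}_{j+1}=[c_j]\,q^{\ell_{i,j-1}}\theta^{\mathrm{vec}}_{j}-q^{c_{j-1}-1}q^{\ell_{i,j-2}}\theta^{\mathrm{vec}}_{j-1},
\]
where $\theta^{\mathrm{vec}}$ denotes the unreduced pair $(r,s)$. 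To get it, we walk the fan of cells ${\mathcal C}_0,\dots,{\mathcal C}_{c_j-1}$ around $v_j$, as in Figure \ref{fig:occ1_descendant}. Let ${\mathcal C}_\mu$ be the minimal-level cell, with base side $v_{x_{\mu+1}}v_{x_\mu}$ of weight $q^{\ell-1}$. The vertex $v_j$ is created in ${\mathcal C}_\mu$ by $\theta_j=\theta_{x_\mu}\oplus_q\theta_{x_{\mu+1}}$ (in the appropriate order). The cells on the counterclockwise side, down to $v_{j-1}$, have side-weight triples $(1,1,q)$. Those on the other side, up to $v_{j+1}$, have triples $(q^{\ell},1,q^{\ell+1}),(q^{\ell+1},\dots),\dots$, by the weight propagation rule in Definition \ref{def:weighted_Farey_graph}.

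Unwinding these Farey sums is the $q$-analogue of the classical computation $\rho_{j-1}=((\mu+1)R-a')/((\mu+1)S-b')$. It expresses the pairs at $v_{j-1}$ and $v_{j+1}$ as linear combinations of the pair at $v_j$ and of one neighbour's pair. Eliminating that neighbour's pair should give
\[
(r_{j+1},s_{j+1})\,q^{\,\cdot}=[c_j]\,(r_j,s_j)-q^{\,\cdot}(r_{j-1},s_{j-1}),
\]
with explicit exponents built from $\ell$ and from $\mu=\mu_{i,j}$, since the sum $[\mu+1]$ appears on one side and a shifted $q^{\mu+1}[c_j-\mu-1]$ on the other. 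Comparing these exponents with the recurrence above, after multiplying through by $q^{\ell_{i,j-1}}$, should produce exactly the shift $\ell_{i,j}-\ell_{i,j-1}=\mu_{i,j}$.

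The main obstacle is the bookkeeping in this step. The base-side weight $q^{\ell-1}$ at ${\mathcal C}_\mu$ must cancel, since it depends on the ancestry. Separately, the exponent $c_{j-1}-1$ from the matrix must be reconciled with the Farey data at $v_{j-1}$, and that data involves $\mu_{i,j-1}$ and the fan around $v_{j-1}$. I would first attempt to avoid the three-term form by proving the equivalent two-term statement
\[
\det\begin{pmatrix} r_j & r_{j+1}\\ s_j & s_{j+1}\end{pmatrix}=-q^{\,e_j}
\]
with an explicit exponent $e_j$, using the fan computation only around $v_j$. Combining this with the matrix determinant $q^{\sum_{u=i}^{j-1}(c_u-1)}$ from the proof of Proposition \ref{prop:q_CCF_q_rational}, and with the inductive hypothesis at $j$, pins down the scalar $q^{\ell_{i,j}}$. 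The identity $\sum_{u\le j}(c_u-1)$ expressed through $\nu$'s in Lemma \ref{lem:occ1} can be used to check consistency of exponents. Degenerate fans, namely $\mu=0$ or $\mu=c_j-1$, and the cells adjacent to the base edge should be treated separately, as in the proof of Proposition \ref{prop:weighted_Farey_graph_degree}.
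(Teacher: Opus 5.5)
\textbf{Verdict: genuine gap.} Your plan is the paper's own. You take the recurrence $\zeta_{i-1,j+1}=[c_j]\zeta_{i-1,j}-q^{c_{j-1}-1}\zeta_{i-1,j-1}$ from $M_q$, unwind the $q$-Farey sums in the fan around $v_j$, and check $j=i+n-2,\,i+n-1$ against Lemma~\ref{lemm:n-1_q}. Your count $\ell_{i,i+n-2}=\sum_{u=i}^{i+n-2}\mu_{i,u}=n-2-c_{i-1}$ is correct (each diagonal not incident to $v_{i-1}$ is counted once) and is a tidy substitute for the paper's Case~2. The identity you write first, $q^{\mu_{i,j}}(r_{j+1},s_{j+1})=[c_j](r_j,s_j)-q^{c_{j-1}-1-\mu_{i,j-1}}(r_{j-1},s_{j-1})$, is exactly the right target.

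However, this identity is the whole content of the inductive step. The proposal does not prove it, and the fan computation you sketch is set up wrongly. With cells listed clockwise from the base side, the minimal cell is ${\mathcal C}_\mu=v_{x_{\mu+1}}v_{x_\mu}v_j$, so $\theta_j=\theta_{x_{\mu+1}}\oplus_q\theta_{x_\mu}$. The side $v_jv_{x_{\mu+1}}$ then has weight $1$ and the side $v_{x_\mu}v_j$ has weight $q^{\ell}$. Hence:
\begin{itemize}
\item toward $v_{j+1}$ the cells carry $(1,1,q),(q,1,q^2),\dots$, independently of $\ell$;
\item toward $v_{j-1}$ the first child carries $(q^{\ell},1,q^{\ell+1})$, and only the later ones carry $(1,1,q)$.
\end{itemize}
So you have the $\ell$-dependence on the wrong side of the fan. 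With your weights, the factor $q^{-\ell}$ from inverting $\theta_j$ never cancels, which is exactly the obstacle you foresee. With the correct weights it cancels immediately. Writing $a'$ for the numerator of $\theta_{x_{\mu+1}}$, one gets $r_{j-1}=[\mu+1]r_j-q^{\mu}a'$ and $r_{j+1}=a'+q[c_j-\mu-1]r_j$ (likewise for $s$). Hence $q^{\mu}(r_{j+1},s_{j+1})=[c_j](r_j,s_j)-(r_{j-1},s_{j-1})$ when $\mu=\mu_{i,j}>0$.

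\textbf{The missing combinatorial input.} You still need the reason the coefficient of $(r_{j-1},s_{j-1})$ equals $q^{c_{j-1}-1-\mu_{i,j-1}}$. It comes from comparing the fans at $v_{j-1}$ and $v_j$, which share the cell ${\mathcal C}_0$, and it needs the paper's case split.
\begin{itemize}
\item If $\mu_{i,j}>0$, then $v_{j-1}$ is labeled in ${\mathcal C}_0$ (across $v_jv_{x_1}$), and ${\mathcal C}_0$ is the last cell around $v_{j-1}$. So $\mu_{i,j-1}=c_{j-1}-1$, and the coefficient is indeed $q^0$.
\item If $\mu_{i,j}=0$, then $v_j$ is labeled in ${\mathcal C}_0$ across $v_{j-1}v_{x_1}$, and here $\ell$ does not cancel at all. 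You must show $\mu_{i,j-1}<c_{j-1}-1$ (unless ${\mathcal C}_0$ is the base cell, where the base edge has weight $q^{-1}$ and the formula is checked directly). Then read off from the fan at $v_{j-1}$ that this side has weight $q^{c_{j-1}-\mu_{i,j-1}-2}$: on the $v_j$-side of its minimal cell, $v_{j-1}$ is the second vertex of every cell, so base weights grow by one factor of $q$ per cell. Now $\theta_j=\theta_{x_1}\oplus_q\theta_{j-1}$ gives $(r_{j+1},s_{j+1})=[c_j](r_j,s_j)-q^{c_{j-1}-1-\mu_{i,j-1}}(r_{j-1},s_{j-1})$.
\end{itemize}
Multiplying by $q^{\ell_{i,j-1}}$ and using $\ell_{i,j-1}-\mu_{i,j-1}=\ell_{i,j-2}$ then closes the induction in both cases.

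\textbf{The determinant shortcut does not work.} Your proposed route via $r_js_{j+1}-s_jr_{j+1}=\pm q^{e_j}$ cannot replace this argument. Given $(\zeta_{i-1,j},\zeta_{i,j})$, the $q$-unimodular rule determines $(\zeta_{i-1,j+1},\zeta_{i,j+1})$ only up to adding multiples of $(\zeta_{i-1,j},\zeta_{i,j})$. So the determinant pins down a scalar only once you already know that $(\zeta_{i-1,j+1},\zeta_{i,j+1})$ is proportional to $(r_{j+1},s_{j+1})$, which is the statement being proved.
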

\begin{proof} We abbreviate $\theta_{i,j+1}$, $r_{i,j+1}$, $s_{i,j+1}$, $\ell_{i,j}$, 
and $\mu_{i,j}$ as $\theta_{j+1}$, $r_{j+1}$, $s_{j+1}$, $\ell_j$, and $\mu_j$. 
\par
\noindent
{\bf Case 1}. Suppose $i-1\leq j \leq i+n-3$. 
We prove the claim by induction on $m=j-i$. 
\par
First, consider the case $m=-1$, that is, $j=i-1$. 
Since $\ell_{i-1}=0$, $r_i=1$, and $s_i=0$, we have 
\begin{align*}
&\zeta_{i-1,i}=1=q^{\ell_{i-1}}r_i\,,\quad \zeta_{i,i}=0=q^{\ell_{i-1}}s_i\,.
\end{align*}
Therefore, the equations \eqref{eqn:quiddity_Farey_graph} follow. 
\par
Next, suppose $m=0$, that is, $j=i$. By the calculation in the proof of Proposition \ref{prop:weighted_Farey_graph_degree}, we showed $\theta_{i+1}=\dfrac{[c_i]}{1}$. 
Thus $r_{i+1}=[c_i]$ and $s_{i+1}=1$ hold. 
From the definition of the $q$-Farey labeling, $\zeta_{i-1,i+1}=[c_i]$ and 
$\zeta_{i,i+1}=1$. Furthermore, since $\ell_{i}=0$, we have 
\begin{align*}
\zeta_{i-1,i+1}=[c_i]=r_{i+1}=q^{\ell_{i}}r_{i+1}\,,\quad 
\zeta_{i,i+1}=1=s_{i+1}=q^{\ell_{i}}s_{i+1}\,.
\end{align*}
Therefore, the equations \eqref{eqn:quiddity_Farey_graph} follow.  
\par
Moreover, suppose $m=1$, that is, $j=i+1$. Note that $n\geq 4$ since $j\leq i+n-3$. 
By \eqref{eqn:q_unimodular0} and \eqref{eqn:q_unimodular} 
in the definition of the $q$-Farey labeling, 
\begin{align}
&\zeta_{i-1,i+2}=\zeta_{i-1,i+1}\zeta_{i,i+2}-q^{c_i-1}=[c_i][c_{i+1}]-q^{c_i-1}\;,\quad 
\zeta_{i,i+2}=[c_{i+1}]\,.\label{eqn:element_q_CCF1}
\end{align}
Let $\cell(v_{i+1})=\{{\mathcal C}'_0,{\mathcal C}'_1,\dots,{\mathcal C}'_{c_{i+1}-1}\}$ 
be the counterclockwise numbering of the cells adjacent to $v_{i+1}$. 
We write the cell ${\mathcal C}'_0$ of the minimum level 
among them as ${\mathcal C}'_0=v_iv_yv_{i+1}$. 
This cell coincides with the cell ${\mathcal C}_{c_i-1}$ adjacent to $v_i$ discussed 
in the proof of Proposition \ref{prop:weighted_Farey_graph_degree}. 
By inductive computation in that proof, the vertices $v_{y}$ and $v_{i+1}$ 
are labeled by $\theta_{y}=\dfrac{[c_i-1]}{1}$ and $\theta_{i+1}=\dfrac{[c_i]}{1}$, respectively. 
The cell ${\mathcal C}'_0={\mathcal C}_{c_i-1}$ has 
the side-weight triple $(q^{c_i-2},1,q^{c_i-1})$, and 
the other cell ${\mathcal C}'_u$ with $1\leq u\leq c_{i+1}-1$ has 
the side-weight triple $(q^{u-1},1,q^u)$. 
By inductive computation along these cells, the vertex $v_{i+2}$ is labeled by 
$\theta_{i+2}=\dfrac{[c_{i+1}][c_i]-q^{c_i-1}}{[c_{i+1}]}$. 
Therefore, $r_{i+2}=[c_i][c_{i+1}]-q^{c_i-1}$ and $s_{i+2}=[c_{i+1}]$ hold. 
By \eqref{eqn:element_q_CCF1}, since $\ell_{i+1}=0$, it follows that
\begin{align*}
&\zeta_{i-1,i+2}=[c_i][c_{i+1}]-q^{c_i-1}=r_{i+2}=q^{\ell_{i+1}}r_{i+2}\\
&\zeta_{i,i+2}=[c_{i+1}]=s_{i+2}=q^{\ell_{i+1}}s_{i+2}\,.
\end{align*}
Hence, the equations \eqref{eqn:quiddity_Farey_graph} follow. 
\par
Assume that the equations \eqref{eqn:quiddity_Farey_graph} hold 
for all $j\leq i+m$ with $m \geq1$. 
We show that they also hold for $j=i+m+1$. 
\par
By induction hypothesis, we have 
\begin{align}
&\begin{pmatrix}
\zeta_{i-1,j}&\zeta_{i-1,j-1}\\
\zeta_{i,j}&\zeta_{i,j-1}
\end{pmatrix}
=\begin{pmatrix}
q^{\ell_{j-1}}r_{j}&q^{\ell_{j-2}}r_{j-1}\\
q^{\ell_{j-1}}s_{j}&q^{\ell_{j-2}}s_{j-1}
\end{pmatrix}\label{eqn:q-CCF_calc1}\\
&\quad \ell_{i-1}=0,\;\ell_{u}-\ell_{u-1}=\mu_{u}\quad(i\leq u\leq j-1)\label{eqn:q-CCF_calc2}
\end{align}
As in the proof of Proposition \ref{prop:3desect_cont_frac_CCF}, 
since $j=i+m+1\leq i+n-3$ and $m\geq1$, we note that $i+2\leq j\leq i+n-3$ 
and $n\geq 5$. Therefore, the edges $v_{j-1}v_j$ and $v_jv_{j+1}$ are different from 
the base edge $v_{i-1}v_i$. 
%In particular, $v_{j+1}\neq v_{i-1}$ and $v_{j+1}\neq v_i$. 
%Hence, by the definition of the $q$-Farey labeling, both the numerator and denominator of $\theta_{j+1}$ are non-zero polynomials. 証明には用いていないので省略する. 
\par
Let 
\begin{align*}  
\cell(v_{j-1})=\{\widetilde{\mathcal C}_0,\widetilde{\mathcal C}_1,
\dots,\widetilde{\mathcal C}_{c_{j-1}-1}\}\,\text{ and }\quad
\cell(v_{j})=\{{\mathcal C}''_0,{\mathcal C}''_1,\dots,{\mathcal C}''_{c_j-1}\}
\end{align*}
be the counterclockwise numberings of the cells adjacent to $v_{j-1}$ and $v_j$, respectively. 
Note that $\widetilde{\mathcal C}_{c_{j-1}-1}={\mathcal C}''_0$. 
We abbreviate the descendant degree $\mu_{j}$ as $\mu$. 
Then ${\mathcal C}''_{\mu}$ is the cell of the minimum level in $\cell(v_{j})$. 
We prove the claim separately in the two cases, $\mu>0$ and $\mu=0$. 
\par
\noindent
{\bf Subcase 1.1}. Suppose $\mu>0$. 
Then $\widetilde{\mathcal C}_{c_{j-1}-1}$ is the cell of the minimum level in $\cell(v_{j-1})$, and 
so we have
\begin{align}\label{eqn:desc_deg1}
\mu_{j-1}=c_{j-1}-1\,.
\end{align}
Moreover, the two vertices $v_{z_\mu}$ and $v_{z_{\mu+1}}$, 
the endpoints of the base side of ${\mathcal C}''_{\mu}$ have been labeled 
by $\theta_{z_{\mu}}=\dfrac{a}{b}$ and $\theta_{z_{\mu+1}}=\dfrac{a'}{b'}$, respectively, 
and ${\mathcal C}''_{\mu}$ has the side-weight triple $(q^{\ell-1},1,q^{\ell})$. 
Thus the label of the vertex $v_j$ is given by 
\begin{align*}
\theta_{j}=\dfrac{r_{j}}{s_{j}}
=\theta_{z_{\mu+1}}\oplus_q \theta_{z_{\mu}}
=\dfrac{a'}{b'}\oplus_q\dfrac{a}{b}
=\dfrac{a'+q^{\ell}a}{b'+q^{\ell}b}\,.
\end{align*}
Thus we obtain $\theta_{z_{\mu}}=\dfrac{a}{b}=\dfrac{q^{-\ell}(r_{j}-a')}{q^{-\ell}(s_{j}-b')}$.  
Since $\mu>0$, the cell ${\mathcal C}''_{\mu}$ has a child cell 
${\mathcal C}''_{\mu-1}$ 
with base side $v_jv_{z_\mu}$ and side-weight triple $(q^{\ell},1,q^{\ell+1})$. 
Hence the third vertex $v_{z_{\mu-1}}$ (other than the endpoints $v_j$ and $v_{z_{\mu}}$ 
of the base side) of ${\mathcal C}''_{\mu-1}$ is labeled by  
\begin{align*}
\theta_{z_{\mu-1}}=\theta_{j}\oplus_q \theta_{z_{\mu}}
=\dfrac{r_{j}}{s_{j}}\oplus_q \dfrac{q^{-\ell}(r_{j}-a')}{q^{-\ell}(s_{j}-b')}
=\dfrac{r_{j}+q^{\ell+1}\cdot q^{-\ell}(r_{j}-a')}{s_{j}+q^{\ell+1}\cdot q^{-\ell}(s_{j}-b')}
=\dfrac{[2]r_{j}-qa'}{[2]s_{j}-qb'}\,.
\end{align*}
Iterating the same argument along the cells
${\mathcal C}''_{\mu-2},\;{\mathcal C}''_{\mu-3},\;\dots,\;{\mathcal C}''_0$, 
which have the same side-weight triple $(1,1,q)$, 
we obtain 
\begin{align*}
\theta_{j-1}=\dfrac{[\mu+1]r_{j}-q^{\mu}a'}{[\mu+1]s_{j}-q^{\mu}b'}\,.
\end{align*} 
Since $\theta_{j-1}=\dfrac{r_{j-1}}{s_{j-1}}$, 
$\theta_{z_{\mu+1}}=\dfrac{a'}{b'}=
\dfrac{q^{-\mu}([\mu+1]r_{j}-r_{j-1})}{q^{-\mu}([\mu+1]s_{j}-s_{j-1})}$ holds. 
Furthermore, applying the same argument along the cells 
${\mathcal C}''_{x}$ with side-weight triple $(q^{x-\mu-1},1,q^{x-\mu})$ for 
$\mu+1\leq x\leq c_j-1$, we obtain 
\begin{align*}
\theta_{j+1}=\dfrac{q^{-\mu}([c_j]r_{j}-r_{j-1})}{q^{-\mu}([c_j]s_{j}-s_{j-1})}\,.
\end{align*}
Substituting $(\ref{eqn:q-CCF_calc1})$, $(\ref{eqn:q-CCF_calc2})$, 
and $(\ref{eqn:desc_deg1})$, we get 
\begin{align*}
\begin{sumipmatrix}
r_{j+1}\\
s_{j+1}
\end{sumipmatrix}
&=q^{-\mu}
\left([c_{j}]
\begin{sumipmatrix}
r_{j}\\
s_{j}
\end{sumipmatrix}
-\begin{sumipmatrix}
r_{j-1}\\
s_{j-1}
\end{sumipmatrix}
\right)
=q^{-\mu-\ell_{j-1}}
\left(
[c_{j}]
\begin{sumipmatrix}
\zeta_{i-1,j}\\
\zeta_{i,j}
\end{sumipmatrix}
-q^{c_{j-1}-1}
\begin{sumipmatrix}
\zeta_{i-1,j-1}\\
\zeta_{i,j-1}
\end{sumipmatrix}
\right)\,.
\end{align*}
Moreover, by the equation \eqref{eqn:q-CCF_calc} in the proof of Proposition \ref{prop:q_CCF_q_rational}, we get
\begin{align}
\begin{sumipmatrix}
\zeta_{i-1,j+1}\\
\zeta_{i,j+1}
\end{sumipmatrix}
&=[c_j]
\begin{sumipmatrix}
R\\
S
\end{sumipmatrix}
-q^{c_{j-1}-1}
\begin{sumipmatrix}
R'\\
S'
\end{sumipmatrix}
=
[c_j]
\begin{sumipmatrix}
\zeta_{i-1,j}\\
\zeta_{i,j}
\end{sumipmatrix}
-q^{c_{j-1}-1}
\begin{sumipmatrix}
\zeta_{i-1,j-1}\\
\zeta_{i,j-1}
\end{sumipmatrix}\,.\label{eqn:q-CCF_calc_again}
\end{align}
Therefore, we obtain
\begin{align*}
r_{j+1}=q^{-\mu-\ell_{j-1}}\zeta_{i-1,j+1}\,,\quad s_{j+1}=q^{-\mu-\ell_{j-1}}\zeta_{i,j+1}
\end{align*} 
Set $\ell_{j}=\ell_{j-1}+\mu$. Then we have
\begin{align*}
&\zeta_{i-1,j+1}=q^{\ell_{j}}r_{j+1}\,,\quad \zeta_{i,j+1}=q^{\ell_{j}}s_{j+1}\,,\\
&\ell_{j}-\ell_{j-1}=\mu=\mu_{j}\,.
\end{align*}
\par
\noindent
{\bf Subcase 1.2}. Suppose $\mu=0$. 
Then 
${\mathcal C}''_0=\widetilde{\mathcal C}_{c_{j-1}-1}$  
is the cell of minimum level in $\cell(v_j)$. We denote this cell by ${\mathcal C}$. 
Let $v_{z_1}$ be the vertex other than two vertices $v_{j-1}$ and $v_j$ in ${\mathcal C}$. 
Let $\widetilde{\mu}=\mu_{j-1}$ be the descendant degree of $v_{j-1}$. 
Then $\widetilde{\mu}<c_{j-1}-1$ holds. 
Because, assumption that $\widetilde{\mu}=c_{j-1}-1$ implies the contradiction as follows. 
From the assumption, the side $v_{z_1}v_j$ is the base side of ${\mathcal C}$. 
If $c_j\geq 2$, then ${\mathcal C}$ has a parent cell in $\cell(v_j)$, which contradicts $\mu=0$. 
Thus $c_j=1$. However, it implies that $v_{z_1}=v_{j+1}$, that is, the edge $v_{j}v_{j+1}$ 
is the base edge, which is a contradiction. 
Since $\widetilde{\mu}<c_{j-1}-1$, the cell 
$\widetilde{\mathcal C}_{\widetilde{\mu}}$ has at least one child 
$\widetilde{\mathcal C}_{\widetilde{\mu}+1}$. 
The child cells $\widetilde{\mathcal C}_{y}$ with $\widetilde{\mu}+1\leq y\leq c_{j-1}-1$ 
have the side-weight triples $(q^{y-\widetilde{\mu}-1},1,q^{y-\widetilde{\mu}})$. 
In particular, the cell ${\mathcal C}$ with the base side $v_{j-1}v_{z_1}$ has 
the side-weight triple $(q^{c_{j-1}-\widetilde{\mu}-2},1,q^{c_{j-1}-\widetilde{\mu}-1})$. 
Therefore, 
\begin{align*}
\theta_j=\dfrac{r_j}{s_j}=\theta_{z_1}\oplus_q \theta_{j-1}=
\dfrac{a+q^{c_{j-1}-\widetilde{\mu}-1}r_{j-1}}{b+q^{c_{j-1}-\widetilde{\mu}-1}s_{j-1}}\,.
\end{align*}
Hence we obtain $\theta_{z_1}=\dfrac{a}{b}
=\dfrac{r_j-q^{c_{j-1}-\widetilde{\mu}-1}r_{j-1}}{s_j-q^{c_{j-1}-\widetilde{\mu}-1}s_{j-1}}$. 
Furthermore, since the child cells ${\mathcal C}''_{x}$ of ${\mathcal C}$ with $1\leq x\leq c_j-1$ 
has the side-weight triple $(q^{x-1},1,q^{x})$, 
by inductive computation along these cells, the vertex $v_{j+1}$ is labeled by 
\begin{align*}
\theta_{j+1}
=\dfrac{[c_j]r_j-q^{c_{j-1}-\widetilde{\mu}-1}r_{j-1}}{[c_j]s_j-q^{c_{j-1}-\widetilde{\mu}-1}s_{j-1}}\,.
\end{align*}
Substituting \eqref{eqn:q-CCF_calc1}, \eqref{eqn:q-CCF_calc2}, and 
$\ell_{j-1}-\ell_{j-2}=\mu_{j-1}=\widetilde{\mu}$, we get 
\begin{align*}
\begin{sumipmatrix}
r_{j+1}\\
s_{j+1}
\end{sumipmatrix}
&=[c_{j}]
\begin{sumipmatrix}
r_{j}\\
s_{j}
\end{sumipmatrix}
-\begin{sumipmatrix}
r_{j-1}\\
s_{j-1}
\end{sumipmatrix}
=q^{-\ell_{j-1}}
\left(
[c_{j}]
\begin{sumipmatrix}
\zeta_{i-1,j}\\
\zeta_{i,j}
\end{sumipmatrix}
-q^{c_{j-1}-1}
\begin{sumipmatrix}
\zeta_{i-1,j-1}\\
\zeta_{i,j-1}
\end{sumipmatrix}
\right)\,.
\end{align*}
Moreover, again by \eqref{eqn:q-CCF_calc_again}, we get
\begin{align*}
r_{j+1}=q^{-\ell_{j-1}}\zeta_{i-1,j+1}\,,\quad s_{j+1}=q^{-\ell_{j-1}}\zeta_{i,j+1}\,.
\end{align*}
Set $\ell_{j}=\ell_{j-1}$. Then we obtain
\begin{align*}
\zeta_{i-1,j+1}=q^{\ell_{j}}r_{j+1}\,,\quad \zeta_{i,j+1}=q^{\ell_{j}}s_{j+1}\,.
\end{align*}
Since $\mu_{j}=0$, we have 
\begin{align*}
\ell_{j}-\ell_{j-1}=0=\mu_{j}\,.
\end{align*}
Therefore, the result follows for $j=i+m+1$. 
\par
\noindent
{\bf Case 2}. Suppose $j=i+n-2$. Then, since $\theta_{j+1}=\theta_{i+n-1}=\dfrac{0}{1}$ by the definition of $q$-Farey labeling, we have 
\begin{align}
&r_{i+n-1}=0,\;s_{i+n-1}=1\label{eqn:q_i-1}\,.
\end{align}
Furthermore, by Proposition \ref{prop:q_CCF_q_rational} (1) and Lemma \ref{lemm:n-1_q}, we have 
\begin{align}			
&\zeta_{i-1,i+n-1}=0,\;\zeta_{i,i+n-1}=q^{n-2-c_{i-1}}\,.\label{eqn:zeta_i-1}
\end{align}
Here, set 
\begin{align*}
\zeta_{i-1,i+n-1}=q^{\ell_{i+n-2}}r_{i+n-1}\,,\quad \zeta_{i,i+n-1}=q^{\ell_{i+n-2}}s_{i+n-1}\,. 
\end{align*}
Substituting \eqref{eqn:q_i-1} and \eqref{eqn:zeta_i-1}, we obtain 
\begin{align}\label{eqn:ell_i+n-2}
\ell_{i+n-2}=n-2-c_{i-1}\,.
\end{align}
On the other hand, let 
\begin{align*}
\cell(v_{i+n-2})=\bigl\{{\widetilde{\widetilde{\mathcal C}}_0},{\widetilde{\widetilde{\mathcal C}}_1},\dots,
{\widetilde{\widetilde{\mathcal C}}_{c_{i-2}-1}}\bigr\}
\,,\quad 
\cell(v_{i+n-1})=\{\widetilde{\mathcal C}_0,\widetilde{\mathcal C}_1,\dots,
\widetilde{\mathcal C}_{c_{i-1}-1}\}\,,
\end{align*}
where ${\widetilde{\widetilde{\mathcal C}}_{c_{i-2}-1}}=\widetilde{\mathcal C}_0$.  
Since the cell $\widetilde{\mathcal C}_{c_{i-1}-1}$ is the base cell with the base edge $v_{i+n-1}v_i$, 
the cell of minimum level in $\cell(v_{i+n-2})$ is ${\widetilde{\widetilde{\mathcal C}}_{c_{i-2}-1}}=\widetilde{\mathcal C}_0$. Therefore, we obtain
\begin{align}\label{eqn;desc_deg_i-2}
\mu_{i+n-2}=c_{i-2}-1\,.
\end{align}
In the proof of Proposition \ref{prop:weighted_Farey_graph_degree}, we showed that 
$\theta_{i+n-2}=\dfrac{q^{c_{i-1}-1}}{[c_{i-1}]}$, that is, 
$r_{i+n-2}=q^{c_{i-1}-1}$, $s_{i+n-2}=[c_{i-1}]$. 
In Case 1, we show that 
$\zeta_{i-1,i+n-2}=q^{\ell_{i+n-3}}r_{i+n-2}$ and $\zeta_{i,i+n-2}=q^{\ell_{i+n-3}}s_{i+n-2}$. 
Thus  
\begin{align*}
\zeta_{i-1,i+n-2}=q^{\ell_{i+n-3}+c_{i-1}-1}\,,\quad 
\zeta_{i,i+n-2}=q^{\ell_{i+n-3}}[c_{i-1}]\,.
\end{align*}
holds. Furthermore, by Proposition \ref{prop:q_CCF_q_rational} (1) and Lemma \ref{lemm:n-1_q}, 
we have
\begin{align*}
\zeta_{i-1,i+n-2}=q^{n-2-c_{i-2}}\,,\quad \zeta_{i,i+n-2}=q^{n-1-c_{i-2}-c_{i-1}}[c_{i-1}]\,. 
\end{align*}
Hence $\ell_{i+n-3}=n-1-c_{i-2}-c_{i-1}$ holds. 
This equation together with \eqref{eqn:ell_i+n-2} and \eqref{eqn;desc_deg_i-2} implies that 
\begin{align*}
\ell_{i+n-2}-\ell_{i+n-3}=n-2-c_{i-1}-(n-1-c_{i-2}-c_{i-1})=c_{i-2}-1=\mu_{i+n-2}\,. 
\end{align*}
Therefore, the result follows for $j=i+n-2$.  
\par
\noindent
{\bf Case 3}. Suppose $j=i+n-1$. 
In Case 2, we show that  
\begin{align*}
\zeta_{i-1,i+n-1}=q^{\ell_{i+n-2}}r_{i+n-1}\,,\quad 
\zeta_{i,i+n-1}=q^{\ell_{i+n-2}}s_{i+n-1}\;.
\end{align*}
By \eqref{eqn:q-mat_n-1} in Lemma \ref{lemm:n-1_q} and Proposition \ref{prop:q-CCF_n-1}, we have
\begin{align}
\zeta_{i-1,i+n}=-q^{n-3}\,,\quad \zeta_{i,i+n}=0 \label{eqn:q_CCF_n-1_ele}
\end{align}
By the definition of the anti-periodic repetition
$\Farey_q(\T,i)^{\per}=(\theta_{i,u})_{u\in\Z}$, 
\begin{align}
r_{i+n}=-r_i=-1,\;s_{i+n}=-s_i=-0=0\label{eqn:q_Farey_n}
\end{align}
holds. Here, set  
\begin{align*}
\zeta_{i-1,i+n}=q^{\ell_{i+n-1}}r_{i+n}\,,\quad \zeta_{i,i+n}=q^{\ell_{i+n-1}}s_{i+n}\,.
\end{align*}
Substituting \eqref{eqn:q_CCF_n-1_ele} and \eqref{eqn:q_Farey_n}, we obtain
\begin{align}\label{eqn:ell_i+n-1}
\ell_{i+n-1}=n-3
\end{align}
holds. This equation together with \eqref{eqn:ell_i+n-2} implies that 
\begin{align}\label{eqn:descendant_i+n-1}
\ell_{i+n-1}-\ell_{i+n-2}=(n-3)-(n-2-c_{i-1})=c_{i-1}-1\,.
\end{align}
Since the descendant degree $\mu_{i+n-1}$ of $v_{i+n-1}$ is given as $\mu_{i+n-1}=c_{i-1}-1$, 
\begin{align*}
\ell_{i+n-1}-\ell_{i+n-2}=\mu_{i+n-1}
\end{align*}
holds. Therefore, the result follows for $j=i+n-1$. 
\end{proof}
%
%\newpage
\begin{example}\label{ex:q_CCF_Farey_graph1}
From Theorem \ref{prop:q_CCF_weighted_Farey}, 
by computing the $q$-Farey labeling of a triangulation and multiplying 
by the accumulated descendant degrees, 
we obtain the entries of the associated $q$-CCF. 
Figure \ref{fig:q_CCF_Farey_graph1} shows the $q$-Farey labeling 
$\Farey_q(\T,3)=(\theta_{3,j})_{3\leq j\leq 11}$ 
for the triangulation $(\T,3)$ in Example \ref{ex:q_CCF_Farey_graph}. 
For each cell of level $L$, the label \maru{$L$} is placed inside the cell. 
For each vertex $v_{j}$ with $3\leq j\leq 11$, 
the descendant degree $\mu_{i,j}$ and $\ell_{3,j}$ are indicated beside $v_j$.  
\par
\begin{figure}[htbp]
\begin{center}
\begin{tikzpicture}[scale=.8]
\coordinate(v0)at(0,0);
\coordinate(v1)at(-3,0);
\coordinate(v2)at(-5,2);
\coordinate(v3)at(-5.5,4.5);
\coordinate(v4)at(-3.5,6.5);
\coordinate(v5)at(-1,7);
\coordinate(v6)at(1.6,6.2);
\coordinate(v7)at(2.5,4);
\coordinate(v8)at(2.2,2);
\filldraw [black] (v0) circle (1pt) node[below]{
\hspace{3.5cm}$v_9\left(
\theta_{3,9}=\dfrac{q[4]}{\{7\}}\right), 
\begin{array}{l}
\mu_{3,9}=0,\\
\ell_{3,9}=2
\end{array}
$};
\filldraw [black] (v1) circle (1pt) node[below]{
\hspace{-2cm}$v_{10}
\left(\theta_{3,10}=\dfrac{q}{[2]}\right),
\begin{array}{l}
\mu_{3,10}=3,\\
\ell_{3,10}=5
\end{array}
$};
\filldraw [black] (v2) circle (1pt) node[left]{
\raisebox{-2cm}{
$v_{11}
\left(\theta_{3,11}=\dfrac{0}{1}\right),
\begin{array}{l}
\ell_{3,2}=0,\\
\mu_{3,11}=1,\\
\ell_{3,11}=6
\end{array}$}
};
\filldraw [black] (v3) circle (1pt) node[left]{
$v_{3}
\Biggl(
\begin{array}{l}
\theta_{3,3}=\dfrac{1}{0},\\
\theta_{3,12}=\dfrac{-1}{-0}
\end{array}
\Biggr),
\begin{array}{l}
\mu_{3,3}=0,\\
\ell_{3,3}=0
\end{array}$
};
\filldraw [black] (v4) circle (1pt) node[above left]{
$v_{4}\left(
\theta_{3,4}=\dfrac{1}{1}\right),
\begin{array}{l}
\mu_{3,4}=0,\\
\ell_{3,4}=0
\end{array}
$};
\filldraw [black] (v5) circle (1pt) node[above]{
\raisebox{.5cm}{\hspace{1cm}{$v_{5}\left(
\theta_{3,5}=\dfrac{q[2]}{[3]}\right),
\begin{array}{l}
\mu_{3,5}=0,\\
\ell_{3,5}=0
\end{array}
$}}};
\filldraw [black] (v6) circle (1pt) node[right]{
\raisebox{1cm}{$v_{6}
\left(
\theta_{3,6}=\dfrac{q\langle7\rangle}{\langle11\rangle}\right),
\begin{array}{l}
\mu_{3,6}=0,\\
\ell_{3,6}=0
\end{array}
$}};
\filldraw [black] (v7) circle (1pt) node[right]{
\raisebox{-1cm}{$v_7
\left(\theta_{3,7}=\dfrac{q\langle5\rangle}{\langle8\rangle}\right),
\begin{array}{l}
\mu_{3,7}=1,\\
\ell_{3,7}=1
\end{array}
$}};
\filldraw [black] (v8) circle (1pt) node[right]{
\raisebox{-1.5cm}{$v_8\left(
\theta_{3,8}=\dfrac{q[3]}{\{5\}}\right),
\begin{array}{l}
\mu_{3,8}=1,\\
\ell_{3,8}=2
\end{array}
$}};
\draw (v0)--(v1) node[midway, above]{\hspace{1cm}$1\hspace{.5cm}\raisebox{.2cm}{\maru{$4$}}$};
\draw (v1)--(v2) node[midway, above]{\raisebox{.4cm}{$1\raisebox{1cm}{\maru{$1$}}$}};
\draw (v1)--(v4) node[midway, right]{$q\raisebox{1cm}{\maru{$2$}}$};
\draw (v1)--(v5)node[midway, right]{$1\hspace{.5cm}\maru{$3$}$};
\draw (v1)--(v8)node[midway, above left]{$1$};
\draw (v2)--(v3)node[midway, left]{$q^{-1}$};
\draw (v2)--(v4)node[midway, left]{$\maru{$0$}\hspace{.2cm}1\hspace{-.1cm}$};
\draw (v3)--(v4)node[midway, above]{$1$};
\draw (v4)--(v5)node[midway, above]{$q^2$};
\draw (v5)--(v6)node[midway, above]{$q^3$};
\draw (v5)--(v7)node[midway, above]{\quad$\raisebox{.1cm}{$q^2$}\;\maru{$5$}$};
\draw (v5)--(v8)node[midway, right]{$q\;\maru{$4$}$};
\draw (v6)--(v7)node[midway, right]{$1$};
\draw (v7)--(v8)node[midway, right]{$1$};
\draw (v8)--(v0)node[midway, right]{$q$};
\end{tikzpicture}
\caption{$q$-Farey labeling $\Farey_q(\T,3)$ and descendant degrees 
for the triangulation $(\T,3)$ with quiddity $\quid(\T)=(1,4,2,1,3,4,1,2,3)$}
\label{fig:q_CCF_Farey_graph1}
\end{center}
\end{figure}
%
%\newpage
\par
From Proposition \ref{prop:q_CCF_q_rational}, the $q$-continued fractions
$[[c_3,c_4,\dots,c_j]]_q$ for $2\leq j\leq11$ are given by the entries in the associated 
$q$-CCF, which are computed in Example \ref{ex:q_CCF}. 
The correspondences between the $q$-continued fractions and the $q$-Farey labeling are as follows.  
\begin{align*}
&[[\quad]]_q=\dfrac{\zeta_{2,3}}{\zeta_{3,3}}=\dfrac{q^{\ell_{3,2}}r_3}{q^{\ell_{3,2}}s_3}
=\dfrac{q^0\cdot1}{q^0\cdot0}=\dfrac{1}{0}\\
&[[c_3]]_q=[[1]]_q=\dfrac{\zeta_{2,4}}{\zeta_{3,4}}
=\dfrac{q^{\ell_{3,3}}r_4}{q^{\ell_{3,3}}s_4}
=\dfrac{q^0\cdot 1}{q^0\cdot1}=\dfrac{1}{1}\\
&[[c_3,c_4]]_q=[[1,3]]_q
=\dfrac{\zeta_{2,5}}{\zeta_{3,5}}
=\dfrac{q^{\ell_{3,4}}r_5}{q^{\ell_{3,4}}s_5}
=\dfrac{q^0\cdot q[2]}{q^0\cdot[3]}=\dfrac{q[2]}{[3]}\\
&[[c_3,c_4,c_5]]_q=[[1,3,4]]_q=\dfrac{\zeta_{2,6}}{\zeta_{3,6}}
=\dfrac{q^{\ell_{3,5}}r_{6}}{q^{\ell_{3,5}}s_{6}}
=\dfrac{q^0\cdot q\langle7\rangle}{q^0\cdot\langle11\rangle}
=\dfrac{q\langle7\rangle}{\langle11\rangle}\\
\intertext{}
&[[c_3,c_4,c_5,c_6]]_q=[[1,3,4,1]]_q=\dfrac{\zeta_{2,7}}{\zeta_{3,7}}
=\dfrac{q^{\ell_{3,6}}r_{7}}{q^{\ell_{3,6}}s_{7}}
=\dfrac{q^0\cdot q\langle5\rangle}{q^0\cdot\langle8\rangle}
=\dfrac{q\langle5\rangle}{\langle8\rangle}\\
&[[c_3,c_4,c_5,c_6,c_7]]_q=[[1,3,4,1,2]]_q=\dfrac{\zeta_{2,8}}{\zeta_{3,8}}
=\dfrac{q^{\ell_{3,7}}r_8}{q^{\ell_{3,7}}s_8}
=\dfrac{q\cdot q[3]}{q\cdot\{5\}}
=\dfrac{q^2[3]}{q\{5\}}\\
&[[c_3,c_4,c_5,c_6,c_7,c_8]]_q=[[1,3,4,1,2,3]]_q=\dfrac{\zeta_{2,9}}{\zeta_{3,9}}
=\dfrac{q^{\ell_{3,8}}r_9}{q^{\ell_{3,8}}s_9}
=\dfrac{q^2\cdot q[4]}{q^2\cdot\{7\}}
=\dfrac{q^3[4]}{q^2\{7\}}\\
&[[c_3,c_4,c_5,c_6,c_7,c_8,c_9]]_q=[[1,3,4,1,2,3,1]]_q=\dfrac{\zeta_{2,10}}{\zeta_{3,10}}
=\dfrac{q^{\ell_{3,9}}r_{10}}{q^{\ell_{3,9}}s_{10}}
=\dfrac{q^2\cdot q}{q^2\cdot[2]}=\dfrac{q^3}{q^2[2]}\\
&[[c_3,c_4,c_5,c_6,c_7,c_8,c_9,c_{10}]]_q=[[1,3,4,1,2,3,1,4]]_q=\dfrac{\zeta_{2,11}}{\zeta_{3,11}}=\dfrac{q^{\ell_{3,10}}r_{11}}{q^{\ell_{3,10}}s_{11}}
=\dfrac{q^5\cdot 0}{q^5\cdot 1}=\dfrac{0}{q^5}\\
&[[c_3,c_4,c_5,c_6,c_7,c_8,c_9,c_{10},c_{11}]]_q=[[1,3,4,1,2,3,1,4,2]]_q
=\dfrac{\zeta_{2,12}}{\zeta_{3,12}}=\dfrac{q^{\ell_{3,11}}r_{12}}{q^{\ell_{3,11}}s_{12}}
=\dfrac{q^6\cdot(-1)}{q^6\cdot(-0)}=\dfrac{-q^6}{0}
\end{align*}
\par
The entries $\zeta_{2,j}$ and $\zeta_{3,j}$ with $3\leq j\leq 12$ of the 
$q$-CCF corresponds with the $q$-Farey labeling as shown in Proposition \ref{prop:q_CCF_q_rational}. 
\par
In Section \ref{sect:q_CCF_weighted_Farey_2}, we show the same correspondence 
for the triangulation $(\T,7)$. 
\end{example}
\vskip.5cm
The following proposition describes the greatest common divisor of 
the numerator and denominator of a $q$-continued fraction. 
This generalizes the result in Morier-Genoud and Ovsienko 
\cite[Proposition 1.6 (iii), (iv)] {morier2020q}.  
\begin{prop}[cf.\ {\cite[Proposition 1.6 (iii), (iv)]{morier2020q}}]
\label{prop:coprime}
Let $\CCF_q(\T)=\{\zeta_{k,\ell}\}_{(k,\ell)\in D_{n+1}}$ 
be the $q$-CCF for a triangulation $(\T,i)$ of an $n$-gon. 
Let $\Farey_q(\T,i)^{\per}=\Bigl(\theta_{i,u}=\dfrac{r_{i,u}}{s_{i,u}}\Bigr)_{u\in\Z}$ 
be its anti-periodic extension.   
\begin{enumerate}[{(}1{)}]
\item For any j with $i-1\leq j\leq i+n-1$, we have 
\begin{align*}
&\gcd(\zeta_{i-1,j+1},\zeta_{i,j+1})=q^{\ell_{i,j}}\,,\quad \gcd(r_{i,j+1},s_{i,j+1})=1\,.
\end{align*}
\item For any $j$ with $i-1\leq j\leq i+n-3$, the entries 
$\zeta_{i-1,j+1}$ have trailing and leading coefficients equal to $1$, 
and all intermediate coefficients are positive.  
\end{enumerate}
\end{prop}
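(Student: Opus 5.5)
By Theorem \ref{prop:q_CCF_weighted_Farey}, for $i-1\leq j\leq i+n-1$ we have $\zeta_{i-1,j+1}=q^{\ell_{i,j}}r_{i,j+1}$ and $\zeta_{i,j+1}=q^{\ell_{i,j}}s_{i,j+1}$. This gives $\gcd(\zeta_{i-1,j+1},\zeta_{i,j+1})=q^{\ell_{i,j}}\gcd(r_{i,j+1},s_{i,j+1})$. So both claims in (1) follow once we show $\gcd(r_{i,j+1},s_{i,j+1})=1$, and I will prove this coprimality directly from the $q$-Farey labeling.

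\textbf{A weighted determinant identity.} Along every side $v_\alpha v_\beta$ of a cell (oriented as in the construction), I will show
\begin{align*}
r_{i,\alpha}s_{i,\beta}-s_{i,\alpha}r_{i,\beta}=\pm q^{e}
\end{align*}
for some integer $e\ge0$. The argument is an induction over cells by level.
\begin{itemize}
\item \emph{Base edge.} For $v_{i+n-1}v_i$ the labels are $\tfrac01,\tfrac10$, and the determinant is $\pm1$.
\item \emph{Inductive step.} Suppose $v_\alpha v_\beta$ is the base side of a child cell, carries weight $q^{\ell-1}$, and satisfies $r_\alpha s_\beta-s_\alpha r_\beta=\pm q^{e}$. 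Then $\theta_\gamma=(r_\alpha+q^\ell r_\beta)/(s_\alpha+q^\ell s_\beta)$ gives
\begin{align*}
r_\gamma s_\beta-s_\gamma r_\beta&=r_\alpha s_\beta-s_\alpha r_\beta,\\
r_\alpha s_\gamma-s_\alpha r_\gamma&=q^\ell(r_\alpha s_\beta-s_\alpha r_\beta).
\end{align*}
Both new sides therefore again have determinant a signed power of $q$.
\end{itemize}
Every vertex $v_{j+1}$ with $i\leq j+1\leq i+n-1$ is an endpoint of some such side, so any common divisor of $r_{i,j+1}$ and $s_{i,j+1}$ divides a power of $q$. The anti-periodic extension only changes signs, so it covers the remaining indices.

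\textbf{Ruling out a common factor $q$.} It remains to exclude $q$ dividing both $r_{i,j+1}$ and $s_{i,j+1}$.
\begin{itemize}
\item For $i+1\leq j+1\leq i+n-2$, Proposition \ref{prop:weighted_Farey_graph_degree}(2) gives $\deg_{\min}(s_{i,j+1})=0$, so $q\nmid s_{i,j+1}$.
\item In the boundary cases, one of $r,s$ is $\pm1$ (namely $\tfrac10$, $\tfrac01$, $\tfrac{-1}{-0}$).
\end{itemize}
Hence the gcd is $1$, which proves (1). I expect the bookkeeping of signs and orientations in the determinant induction to be the only delicate point. It is harmless here because only "unit times power of $q$" matters.

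\textbf{Part (2).} For $i-1\leq j\leq i+n-3$ we have $\zeta_{i-1,j+1}=q^{\ell_{i,j}}r_{i,j+1}$, so its coefficient sequence is that of $r_{i,j+1}$, shifted in degree.
\begin{itemize}
\item For $i+1\leq j+1\leq i+n-2$, Proposition \ref{prop:weighted_Farey_graph_degree}(1) says $r_{i,j+1}$ has trailing and leading coefficients $1$ and positive intermediate coefficients. These properties are unaffected by multiplying by a power of $q$.
\item The remaining case $j=i-1$ gives $\zeta_{i-1,i}=1$, which trivially has the property.
\end{itemize}
This completes the plan.
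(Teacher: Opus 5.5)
Your proof is correct. Part (2) and the boundary cases are handled exactly as in the paper. The difference lies in where the ``power of $q$'' in part (1) comes from.

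The paper obtains it on the frieze side at no cost. The $q$-unimodular rule $\zeta_{i-1,j}\zeta_{i,j+1}-\zeta_{i-1,j+1}\zeta_{i,j}=q^{\sum_{u=i}^{j-1}(c_u-1)}$ shows at once that $\gcd(\zeta_{i-1,j+1},\zeta_{i,j+1})$ is a power of $q$. Theorem~\ref{prop:q_CCF_weighted_Farey} then makes $\gcd(r_{i,j+1},s_{i,j+1})$ a power of $q$ dividing $s_{i,j+1}$, hence equal to $1$ by Proposition~\ref{prop:weighted_Farey_graph_degree}(2).

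You instead prove, by induction over cells, a $q$-analogue of the classical invariant noted in Definition~\ref{def:Farey_graph}: $r_{i,\alpha}s_{i,\beta}-s_{i,\alpha}r_{i,\beta}=\pm q^{e}$ along every oriented side. This costs an extra induction, but it has two advantages.
\begin{itemize}
\item It makes $\gcd(r_{i,j+1},s_{i,j+1})=1$ a statement intrinsic to the $q$-Farey labeling. It uses only Proposition~\ref{prop:weighted_Farey_graph_degree}(2), with no appeal to the frieze, so Theorem~\ref{prop:q_CCF_weighted_Farey} is needed only to transfer the result to the entries $\zeta_{i-1,j+1},\zeta_{i,j+1}$.
\item It gives slightly more information. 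Your invariant holds along the diagonals of $\T$ as well, whereas the frieze rule only relates the labels of consecutive vertices $v_j,v_{j+1}$.
\end{itemize}
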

\noindent
\begin{proof} We use the abbreviations 
$\theta_{i,j+1}=\theta_{j+1}$, $r_{i,j+1}=r_{j+1}$, and $s_{i,j+1}=s_{j+1}$. 
\par
\if0
First suppose $j=i-1$. Then $\zeta_{i-1,i}=1$, 
$\zeta_{i,i}=0$, $\theta_{i}=\dfrac{r_i}{s_i}=\dfrac{1}{0}$, 
$\ell_{i,i-1}=0$, so that 
\begin{align*}
&\gcd(\zeta_{i-1,j+1},\zeta_{i,j+1})=\gcd(1,0)=1=q^{\ell_{i,j}}, \\
&\gcd(r_{j+1},s_{j+1})=\gcd(1,0)=1\,,
\end{align*}
the statement (1) follows. Since $\zeta_{i-1,i}=1$, the statement (2) follows.  
\par
\fi
First suppose $i\leq j\leq i+n-3$. 
By the $q$-unimodular rule of the $q$-CCF, 
\[
\zeta_{i-1,j}\zeta_{i,j+1}-\zeta_{i-1,j+1}\zeta_{i,j}=q^{\sum\limits_{u=i}^{j-1}(c_u-1)}\,,
\]
so $\gcd(\zeta_{i-1,j+1},\zeta_{i,j+1})$ is a power of $q$. 
By \eqref{eqn:quiddity_Farey_graph}, 
we have
\begin{align*}
\zeta_{i-1,j+1}=q^{\ell_{i,j}}r_{j+1}\,,\quad
\zeta_{i,j+1}=q^{\ell_{i,j}}s_{j+1}\,.
\end{align*}
Hence
\begin{align*}
\gcd(\zeta_{i-1,j+1},\zeta_{i,j+1})=q^{\ell_{i,j}}\gcd(r_{j+1},s_{j+1})\,.
\end{align*}
Since, by Proposition \ref{prop:weighted_Farey_graph_degree} (1) and (2), 
the constant term of $s_{j+1}$ is equal to $1$, 
we obtain $\gcd(r_{j+1},s_{j+1})=1$. 
Thus the result (1) follows.  
\par
Moreover, by Proposition \ref{prop:weighted_Farey_graph_degree} (1), 
the numerator $r_{j+1}$ has trailing and leading coefficients equal to $1$, and 
all intermediate coefficients are positive.  
Since $\zeta_{i-1,j+1}=q^{\ell_{i,j}}r_{j+1}$, 
the same property holds for $\zeta_{i-1,j+1}$. Thus the result (2) follows. 
\par
For the boundary cases $j=i-1$, $j=i+n-2$, and $j=i+n-1$, 
the statements follow directly from  the explicit values of the corresponding entries 
of the $q$-CCF and the definition of $\ell_{i,j}$. 
\if0 
Then 
$\zeta_{i-1,i+n-1}=0$, $\zeta_{i,i+n-1}=q^{n-2-c_{i-1}}$, and 
$\theta_{i+n-1}=\dfrac{r_{i+n-1}}{s_{i+n-1}}=\dfrac{0}{1}$. 
Furthermore, by \eqref{eqn:ell_i+n-2} in the proof of Theorem \ref{prop:q_CCF_weighted_Farey}, 
we obtain
\begin{align*}
&\gcd(\zeta_{i-1,j+1},\zeta_{i,j+1})=\gcd(0,q^{n-2-c_{i-1}})=q^{n-2-c_{i-1}}=q^{\ell_{i,j}}\,,\\
&\gcd(r_{j+1},s_{j+1})=\gcd(0,1)=1\,.
\end{align*}
Therefore the result (1) follows.  
\par
Suppose $j=i+n-1$. 
Then $\zeta_{i-1,i+n}=-q^{n-3}$, $\zeta_{i,i+n}=0$, and 
$\theta_{i+n}=\dfrac{r_{i+n}}{s_{i+n}}=\dfrac{-r_i}{-s_i}=\dfrac{-1}{-0}$. 
Furthermore, by \eqref{eqn:ell_i+n-1} in the proof of Theorem \ref{prop:q_CCF_weighted_Farey}, 
we obtain
\begin{align*}
&\gcd(\zeta_{i-1,j+1},\zeta_{i,j+1})=\gcd(-q^{n-3},0)=q^{n-3}=q^{\ell_{i,j}}\,, \\
&\gcd(r_{j+1},s_{j+1})=\gcd(-1,0)=1\,.
\end{align*}
Therefore the result (1) follows.  
\fi
\end{proof}
\section{Degrees in \texorpdfstring{$q$}{\it q}-CCFs and 
\texorpdfstring{$q$}{\it q}-Farey labelings}
\label{sect:degree_q_CCF_q_Farey_graph}
\subsection{Degrees in \texorpdfstring{$q$}{\it q}-CCFs}
Let $\dfrac{r}{s}=[[c_1,c_2,\dots,c_k]]$ with $c_u\in\Z$ and $c_u\geq 2$ 
be the negative continued fraction of a rational number $\dfrac{r}{s}>1$. 
Then the sequence $(c_1,c_2,\dots,c_k)$ is a subsequence of the quiddity 
$(c_u)_{u\in\Z}$ of a triangulation with exactly two exterior cells, 
where $c_0=c_{k+1}=1$. 
Morier-Genoud and Ovsienko \cite[Proposition 1.6 (i), Corollary 1.7 (i)]{morier2020q} 
showed that, 
if $[[c_1,c_2,\dots,c_j]]_q=\dfrac{{\mathcal R}_j}{{\mathcal S}_j}$ with $1\leq j\leq k$, 
then
\begin{align}
&\deg_{\min}{\mathcal R}_j=\deg_{\min}{\mathcal S}_j=0\,,\label{eqn:MG_O_q_rat1}\\
&\deg{\mathcal R}_j=c_1+\cdots+c_j-j\,,\label{eqn:MG_O_q_rat2}\\
&\deg{\mathcal S}_j=c_2+\cdots+c_j-j+1\,.\label{eqn:MG_O_q_rat3}
\end{align}
However, for general triangulations, 
these equalities do not hold. 
For example, in Example \ref{ex:q_CCF_Farey_graph1}, 
we have $[[1,3]]_q=\dfrac{q[2]}{[3]}$, so that $\deg_{\min}q[2]=1>0$, and hence, 
\eqref{eqn:MG_O_q_rat1} fails. 
Furthermore, the degrees in 
$[[1,3,4,1,2]]_q=\dfrac{q^2[3]}{q\{5\}}$ are strictly smaller than those 
predicted by \eqref{eqn:MG_O_q_rat2} and 
\eqref{eqn:MG_O_q_rat3}. 
The following proposition describes, for general triangulations, 
the exact degrees of the numerators 
$\zeta_{i-1,j+1}$ and denominators $\zeta_{i,j+1}$ 
of the $q$-continued fractions $[[c_i,c_{i+1},\dots,c_j]]_q$. 
The degrees are determined by the number of occurrences of $1$ in the associated 
$q$-CCF. 
\begin{prop}\label{prop:q_CCF_min_deg}
For any $k,\ell\in\Z$ with $k+1\leq \ell \leq k+n-1$, 
the trailing and leading degrees of $\zeta_{k,\ell}$ are given as follows. 
\begin{align*}
&\deg_{\min}\zeta_{k,\ell}=\nu^{\tri}_{k,\ell-1}\;,\quad 
\deg \zeta_{k,\ell}=\sum_{u=k+1}^{\ell-1}(c_u-1)-\nu^{\tri}_{k+1,\ell}
=\nu^{\tri}_{k,\ell-1}+\nu^{\rec}_{k+1,\ell-1,k+n-1}\;, 
\end{align*}
where, for $\ell=k+1$, we regard $\sum\limits_{u=k+1}^{\ell-1}(c_u-1)$ as $0$.  
\end{prop}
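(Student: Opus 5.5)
Throughout, set $i=k+1$ and $j=\ell-1$, so that the hypothesis $k+1\leq \ell\leq k+n-1$ becomes $i-1\leq j\leq i+n-3$. Then $\zeta_{k,\ell}$ can be read in two ways: as the numerator entry $\zeta_{i-1,j+1}$ for the base index $i=k+1$, or as the denominator entry $\zeta_{k,(\ell-1)+1}$ for the base index $k$. The plan is to compute the trailing degree from the denominator reading and the leading degree from a $q\mapsto q^{-1}$ duality combined with a mirror symmetry of the triangulation. The second equality for $\deg\zeta_{k,\ell}$ will then follow from Lemma \ref{lem:occ1}. The case $\ell=k+1$ is trivial, since $\zeta_{k,k+1}=1$ and all the counts vanish, so we assume $\ell\geq k+2$.

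\emph{Trailing degree.} Use the triangulation $(\T,k)$ and apply Theorem \ref{prop:q_CCF_weighted_Farey} with base index $k$ and index $\ell-1$. This gives $\zeta_{k,\ell}=q^{\ell_{k,\ell-1}}s_{k,\ell}$. For $k+1\leq \ell\leq k+n-2$, Proposition \ref{prop:weighted_Farey_graph_degree} (2) gives $\deg_{\min}s_{k,\ell}=0$. For $\ell=k+n-1$ we have $s_{k,k+n-1}=1$ directly. Hence $\deg_{\min}\zeta_{k,\ell}=\ell_{k,\ell-1}=\sum_{u=k}^{\ell-1}\mu_{k,u}$. By \eqref{another_def:descendant}, $\mu_{k,u}=\delta(V[k,u-2],\{v_u\})$. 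Summing over $u$ counts every diagonal with both endpoints in $V[k,\ell-1]$ exactly once, namely at its larger endpoint. So the sum equals $\delta(V[k,\ell-1])$, and since $\ell-1\leq k+n-2$, Lemma \ref{lemm:count1_trianglation} identifies this with $\nu^{\tri}_{k,\ell-1}$.

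\emph{Leading degree.} Write $\Sigma=\sum_{u=k+1}^{\ell-1}(c_u-1)$. Since $[c]_{q^{-1}}=q^{1-c}[c]_q$, we have $q^{c-1}M_q(c)|_{q\to q^{-1}}=\begin{sumipmatrix}[c]&-1\\ q^{c-1}&0\end{sumipmatrix}$. This matrix equals $J\,M_q(c)^{T}J$ with $J=\mathrm{diag}(1,-1)$. Multiplying over $u=k+1,\dots,\ell-1$ gives
\[
q^{\Sigma}M_q(c_{k+1},\dots,c_{\ell-1})\big|_{q\to q^{-1}}=J\,M_q(c_{\ell-1},\dots,c_{k+1})^{T}J .
\]
Now compare top-left entries. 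On the left, Proposition \ref{prop:q_CCF_q_rational} (1) gives $\zeta_{k,\ell}$. On the right, the reversed sequence is a subsequence of the quiddity of the mirror triangulation $\T^{\mathrm{mir}}$ (vertices relabelled $v_u\mapsto v_{-u}$), and the top-left entry is the entry $\zeta^{\mathrm{mir}}_{-\ell,-k}$ of $\CCF_q(\T^{\mathrm{mir}})$. Therefore $q^{\Sigma}\zeta_{k,\ell}(q^{-1})=\zeta^{\mathrm{mir}}_{-\ell,-k}(q)$, and so
\[
\deg\zeta_{k,\ell}=\Sigma-\deg_{\min}\zeta^{\mathrm{mir}}_{-\ell,-k}.
\]
By the trailing-degree step applied to $\T^{\mathrm{mir}}$, the last term equals $\nu^{\tri,\mathrm{mir}}_{-\ell,-k-1}=\delta^{\mathrm{mir}}(V[-\ell,-k-1])$. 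Under the relabelling this is $\delta(V[k+1,\ell])=\nu^{\tri}_{k+1,\ell}$, which gives the first formula for $\deg\zeta_{k,\ell}$.

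Finally, Lemma \ref{lem:occ1} with $i=k+1$ and $j=\ell-1\leq i+n-3$ gives
\[
\Sigma=\nu^{\tri}_{k,\ell-1}+\nu^{\tri}_{k+1,\ell}+\nu^{\rec}_{k+1,\ell-1,k+n-1},
\]
which yields the second equality. The main obstacle is the duality step. One must check the transpose identity and make sure the bookkeeping of indices under the mirror relabelling is exact, in particular that the reversed product really sits in the top-left position of $\CCF_q(\T^{\mathrm{mir}})$ with the claimed indices. One must also check that the Case 2 endpoint $\ell=k+n-1$, where the denominator entry is a pure power of $q$, is handled consistently on both sides. If the mirror argument proves awkward, a fallback is to run the induction of Theorem \ref{prop:q_CCF_weighted_Farey} tracking leading degrees through Lemma \ref{lem:wFg_deg_posi}, where $d_\gamma=d_\beta+\ell$.
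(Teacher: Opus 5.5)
Your proof is correct, and it takes a genuinely different route from the paper's.

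\textbf{The paper's route.} The paper proves both formulas by induction on $\ell-k$, using the $q$-unimodular recursion $\zeta_{k,\ell}=\bigl(\zeta_{k,\ell-1}\zeta_{k+1,\ell}-q^{\sum_{\alpha=k+1}^{\ell-2}(c_\alpha-1)}\bigr)/\zeta_{k+1,\ell-1}$. It splits into cases according to whether $\sigma_{k,\ell-1}=1$ (for the trailing degree) or $\sigma_{k+1,\ell}=1$ (for the leading degree). When the entry equals $1$, the subtracted monomial cancels the extreme term of the product. Positivity of all intermediate coefficients (Proposition~\ref{prop:weighted_Farey_graph_degree}~(1), transported through Theorem~\ref{prop:q_CCF_weighted_Farey}) is then needed to see that the next coefficient survives. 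When the entry is at least $2$, the strict inequality from Case~2 of Lemma~\ref{lem:occ1} rules out cancellation. Inclusion--exclusion on $U^{\tri}$ closes the induction.

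\textbf{Your route.} You avoid all cancellation analysis.
\begin{itemize}
\item For the trailing degree, you read it off from $\zeta_{k,\ell}=q^{\ell_{k,\ell-1}}s_{k,\ell}$ with $s_{k,\ell}(0)=1$. The identity $\ell_{k,\ell-1}=\delta(V[k,\ell-1])$ is the direct diagonal count used in the appendix's alternative proof of Proposition~\ref{prop:count1_desendant}. That count relies only on Lemma~\ref{lemm:count1_trianglation}, so there is no circularity, even though the paper deduces that proposition from this one.
\item For the leading degree, you reduce it to a trailing degree via the reversal duality $q^{\Sigma}\zeta_{k,\ell}(q^{-1})=\zeta^{\mathrm{mir}}_{-\ell,-k}(q)$. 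Your index bookkeeping there is right, including the endpoint $\ell=k+n-1$. In the running example, $q^{6}\zeta_{3,8}(q^{-1})=q^{2}\langle5\rangle$, which is indeed the top-left entry of $M_q(2,1,4,3)$.
\end{itemize}
One small point: realize $\T^{\mathrm{mir}}$ as the reflected polygon relabelled by $w_u=v_{-u}$. Then the clockwise conventions underlying Theorem~\ref{prop:q_CCF_weighted_Farey} apply to it verbatim.

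\textbf{Comparison.} Your route needs only part~(2) of Proposition~\ref{prop:weighted_Farey_graph_degree}, not coefficient positivity. It also explains structurally why the leading degree is governed by the shifted count $\nu^{\tri}_{k+1,\ell}$. The paper's route stays inside the frieze recursion, and it keeps Proposition~\ref{prop:count1_desendant} as a consequence rather than an ingredient.
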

\noindent
\begin{proof}
The second equality for $\deg \zeta_{k,\ell}$ immediately follows from 
Lemma \ref{lem:occ1}. Hence, we only prove the first equality. 
We prove the claim by induction on $\ell-k$. 
\par
Suppose $\ell=k+1$. Then $\zeta_{k,k+1}=1$, 
and hence $\deg_{\min}\zeta_{k,k+1}=\deg \zeta_{k,k+1}=0$. 
Since $U^{\tri}_{k,k}=U^{\tri}_{k+1,k+1}=\phi$, 
we obtain $\nu^{\tri}_{k,k}=\nu^{\tri}_{k+1,k+1}=0$, 
and the statement follows. 
\par
Suppose $\ell=k+2$. Then 
$\zeta_{k,k+2}=[c_{k+1}]=1+q+\cdots+q^{c_{k+1}-1}$, so that  
$\deg_{\min}\zeta_{k,k+2}=0$, $\deg \zeta_{k,k+2}=c_{k+1}-1$. 
Since $U^{\tri}_{k,k+1}=U^{\tri}_{k+1,k+2}=\phi$, we obtain
$\nu^{\tri}_{k,k+1}=\nu^{\tri}_{k+1,k+2}=0$, 
and the statement follows. 
\par
Assume that the statement holds for $\ell\leq k+m+1$ with $m\geq1$. 
We prove the statement for $\ell=k+m+2$. 
By the $q$-unimodular rule of the $q$-CCF, we have 
$\zeta_{k,\ell}
=\dfrac{\zeta_{k,\ell-1}\zeta_{k+1,\ell}
-q^{\sum\limits_{\alpha=k+1}^{\ell-1}(c_\alpha-1)}}{\zeta_{k+1,\ell-1}}$. 
Using the induction hypothesis, we obtain
\begin{align}
&\deg_{\min}\zeta_{k,\ell}
=\deg_{\min}
\Bigl(\zeta_{k,\ell-1}\zeta_{k+1,\ell}
-q^{\sum\limits_{\alpha=k+1}^{\ell-1}(c_\alpha-1)}\Bigr)-
\nu^{\tri}_{k+1,\ell-2}\label{eqn:q_CCF_min_deg1}\,,\\
&\deg \zeta_{k,\ell}
=\deg\Bigl(\zeta_{k,\ell-1}\zeta_{k+1,\ell}
-q^{\sum\limits_{\alpha=k+1}^{\ell-1}(c_\alpha-1)}\Bigr)
-\sum_{\beta=k+2}^{\ell-2}(c_\beta-1)+\nu^{\tri}_{k+2,\ell-1}\,,\label{eqn:q_CCF_deg1}\\
&\deg_{\min}(\zeta_{k,\ell-1}\zeta_{k+1,\ell})
=\nu^{\tri}_{k,\ell-2}+\nu^{\tri}_{k+1,\ell-1}
\label{eqn:q_CCF_min_deg2}\,,\\
&\deg(\zeta_{k,\ell-1}\zeta_{k+1,\ell})
=\hspace{-.2cm}\sum_{\alpha=k+1}^{\ell-2}(c_\alpha-1)
+\hspace{-.2cm}\sum_{\gamma=k+2}^{\ell-1}(c_\gamma-1)
-\nu^{\tri}_{k+1,\ell-1}-\nu^{\tri}_{k+2,\ell}\,.\label{eqn:q_CCF_deg2}
\end{align}
Note that, by Proposition~\ref{prop:weighted_Farey_graph_degree}~(1) and 
\eqref{eqn:quiddity_Farey_graph} in Theorem~\ref{prop:q_CCF_weighted_Farey},  
the product $\zeta_{k,\ell-1}\zeta_{k+1,\ell}$ has 
trailing and leading coefficients equal to $1$. 
To determine the trailing (resp.~leading) degrees of 
$\zeta_{k,\ell}$, we analyze the expression
$\zeta_{k,\ell-1}\zeta_{k+1,\ell}
-q^{\sum\limits_{\alpha=k+1}^{\ell-1}(c_\alpha-1)}$ 
according to the values of $\sigma_{k,\ell-1}$ (resp.~$\sigma_{k+1,\ell}$). 
We treat the trailing and leading degrees separately,
resulting in Cases 1.1 and 1.2 (resp.~Cases 2.1 and 2.2). 
Note that both $\sigma_{k,\ell-1}$ and $\sigma_{k+1,\ell}$ 
cannot be equal to $1$, that is, 
both $\zeta_{k,\ell-1}$ and $\zeta_{k+1,\ell}$ 
cannot be monomials in $q$. 
Indeed, otherwise the $q$-unimodular rule would imply 
$\sigma_{k,\ell}=0$, contradicting 
Proposition~\ref{prop:3desect_cont_frac_CCF} (1), (2).
\par
\noindent
{\bf Case 1.1}. Suppose $\sigma_{k,\ell-1}=1$. 
Then, as shown in Case 1 in the proof of Lemma \ref{lem:occ1}, \\
$\sum\limits_{\alpha=k+1}^{\ell-2}(c_{\alpha}-1)=\nu^{\tri}_{k,\ell-2}+\nu^{\tri}_{k+1,\ell-1}$.  
By \eqref{eqn:q_CCF_min_deg2}, we have  
$\deg_{\min}(\zeta_{i,j-1}\zeta_{i+1,j})=\sum\limits_{\alpha=k+1}^{\ell-2}(c_{\alpha}-1)$. 
The product $\zeta_{k,\ell-1}\zeta_{k+1,\ell}$ has trailing coefficient 
equal to $1$. 
Furthermore, since it is not a monomial, 
Proposition \ref{lem:wFg_deg_posi} (1) implies that 
the coefficient of $q^{\nu^{\tri}_{k,\ell-2}+\nu^{\tri}_{k+1,\ell-1}+1}$ is positive. 
Thus,  
\begin{align*}
\deg_{\min}\Bigl(\zeta_{k,\ell-1}\zeta_{k+1,\ell}-q^{\sum\limits_{\alpha=k+1}^{\ell-2}
(c_{\alpha}-1)}\Bigr)=\nu^{\tri}_{k,\ell-2}+\nu^{\tri}_{k+1,\ell-1}+1\,.
\end{align*}
By \eqref{eqn:q_CCF_min_deg1}, it follows that
\begin{align*}
\deg_{\min}\zeta_{k,\ell}
&=\nu^{\tri}_{k,\ell-2}+\nu^{\tri}_{k+1,\ell-1}+1-\nu^{\tri}_{k+1,\ell-2}\,.
\end{align*}
Since $\sigma_{k,\ell-1}=1$, we have $(k,\ell-1)\in U^{\tri}_{k,\ell-1}$, so that 
$U^{\tri}_{k,\ell-1}=U^{\tri}_{k,\ell-2}\cup U^{\tri}_{k+1,\ell-1}\cup\{(k,\ell-1)\}$. 
Noting that $U^{\tri}_{k,\ell-2}\cap U^{\tri}_{k+1,\ell-1}=U^{\tri}_{k+1,\ell-2}$, 
we obtain
\begin{align*}
\nu^{\tri}_{k,\ell-1}=\nu^{\tri}_{k,\ell-2}+\nu^{\tri}_{k+1,\ell-1}-\nu^{\tri}_{k+1,\ell-2}+1\,.
\end{align*}
Therefore, $\deg_{\min}\zeta_{k,\ell}=\nu^{\tri}_{k,\ell-1}$.  
\par
\noindent
{\bf Case 1.2}. Suppose $\sigma_{k,\ell-1}\geq2$. Then, 
as shown in Case 2 in the proof of Proposition 
\ref{lem:occ1}, 
\\
$\sum\limits_{\alpha=k+1}^{\ell-2}(c_{\alpha}-1)>\nu^{\tri}_{k,\ell-2}+\nu^{\tri}_{k+1,\ell-1}$. 
By \eqref{eqn:q_CCF_min_deg2}, we have 
$\deg_{\min}(\zeta_{k,\ell-1}\zeta_{k+1,\ell})<\sum\limits_{\alpha=k+1}^{\ell-2}(c_{\alpha}-1)$. Thus,  
\begin{align*}
\deg_{\min}\Bigl(\zeta_{k,\ell-1}\zeta_{k+1,\ell}-q^{\sum\limits_{\alpha=k+1}^{\ell-2}(c_{\alpha}-1)}\Bigr)
=\nu^{\tri}_{k,\ell-2}+\nu^{\tri}_{k+1,\ell-1}\,.
\end{align*}
By \eqref{eqn:q_CCF_min_deg1}, it follows that
\begin{align*}
\deg_{\min}\zeta_{k,\ell}
&=\nu^{\tri}_{k,\ell-2}+\nu^{\tri}_{k+1,\ell-1}-\nu^{\tri}_{k+1,\ell-2}\,.
\end{align*}
Since $\sigma_{k,\ell-1}\geq 2$, we have $(k,\ell-1)\not\in U^{\tri}_{k,\ell-1}$, so that 
$U^{\tri}_{k,\ell-1}=U^{\tri}_{k,\ell-2}\cup U^{\tri}_{k+1,\ell-1}$. 
Hence, we obtain
\begin{align*}
\nu^{\tri}_{k,\ell-1}&=\nu^{\tri}_{k,\ell-2}+\nu^{\tri}_{k+1,\ell-1}-\nu^{\tri}_{k+1,\ell-2}\,.
\end{align*}
Therefore, $\deg_{\min}\zeta_{k,\ell}=\nu^{\tri}_{k,\ell-1}$.  
\par
The following arguments in Cases 2.1 and 2.2 are parallel to those
in Cases 1.1 and 1.2, respectively.
\par
\noindent
{\bf Case 2.1}. Suppose $\sigma_{k+1,\ell}=1$. 
Then, as in Case 1.1, 
$\sum\limits_{\gamma=k+2}^{\ell-1}(c_\gamma-1)=\nu^{\tri}_{k+1,\ell-1}+\nu^{\tri}_{k+2,\ell}$. By \eqref{eqn:q_CCF_deg2}, we have
$\deg(\zeta_{k,\ell-1}\zeta_{k+1,\ell})
=\sum\limits_{\alpha=k+1}^{\ell-2}(c_\alpha-1)$. 
The product $\zeta_{i,j-1}\zeta_{i+1,j}$ has leading coefficient equal to $1$, 
and it is not a monomial. Hence, Proposition \ref{prop:weighted_Farey_graph_degree} (1) implies that the coefficient of $q^{\left(\sum\limits_{\alpha=k+1}^{\ell-2}(c_\alpha-1)\right)-1}$ is positive. 
Thus,  
\begin{align*}
\deg\Bigl(\zeta_{k,\ell-1}\zeta_{k+1,\ell}
-q^{\sum\limits_{\alpha=k+1}^{\ell-2}(c_\alpha-1)}\Bigr)
=\left(\sum_{\alpha=k+1}^{\ell-2}(c_\alpha-1)\right)-1\,. 
\end{align*}
By \eqref{eqn:q_CCF_deg1}, it follows that
\begin{align*}
\deg \zeta_{k,\ell}=c_{k+1}-2+\nu^{\tri}_{k+2,\ell-1}\,.
\end{align*}
Since $(k+1,\ell)\in U^{\tri}_{k+1,\ell}$, we have 
$U^{\tri}_{k+1,\ell}=U^{\tri}_{k+1,\ell-1}\cup U^{\tri}_{k+2,\ell} \cup\{(k+1,\ell)\}$. 
Hence,  
%Noting that $U^{\tri}_{k+1,\ell-1}\cap U^{\tri}_{k+2,\ell}=U^{\tri}_{k+2,\ell-1}$, 
we obtain
\begin{align*}
\sum_{u=k+1}^{\ell-1}(c_u-1)-\nu^{\tri}_{k+1,\ell}
&=\sum_{u=k+1}^{\ell-1}(c_u-1)-\Bigl(\nu^{\tri}_{k+1,\ell-1}+\nu^{\tri}_{k+2,\ell}-\nu^{\tri}_{k+2,\ell-1}+1\Bigr)\\
&=\sum_{u=k+1}^{\ell-1}(c_u-1)
-\Bigl(\sum_{\gamma=k+2}^{\ell-1}(c_{\gamma}-1)-\nu^{\tri}_{k+2,\ell-1}+1\Bigr)\\
&=c_{k+1}-2-\nu^{\tri}_{k+2,\ell-1}\,.
\end{align*}
Therefore, 
$\deg \zeta_{k,\ell}=\sum\limits_{u=k+1}^{\ell-1}(c_u-1)-\nu^{\tri}_{k+1,\ell}$. 
\par
\noindent
{\bf Case 2.2}. Suppose $\sigma_{k+1,\ell}\geq2$. 
Then, as in Case 1.2, 
$\sum\limits_{\gamma=k+2}^{\ell-1}(c_{\gamma}-1)
>\nu^{\tri}_{k+1,\ell-1}+\nu^{\tri}_{k+2,\ell}$. 
By \eqref{eqn:q_CCF_deg2}, we have
$\deg(\zeta_{k,\ell-1}\zeta_{k+1,\ell})>\sum\limits_{\alpha=k+1}^{\ell-2}(c_\alpha-1)$. 
Thus, 
\begin{align*}
\deg\Bigl(\zeta_{k,\ell-1}\zeta_{k+1,\ell}-q^{\sum\limits_{\alpha=k+1}^{\ell-2}(c_\alpha-1)}\Bigr)
&=\sum_{\alpha=k+1}^{\ell-2}(c_\alpha-1)
+\sum_{\gamma=k+2}^{\ell-1}(c_\gamma-1)
-\nu^{\tri}_{k+1,\ell-1}-\nu^{\tri}_{k+2,\ell}\,.
\end{align*}
By \eqref{eqn:q_CCF_deg1}, it follows that
\begin{align*}
\deg \zeta_{k,\ell}
&=\sum_{u=k+1}^{\ell-1}(c_u-1)-\nu^{\tri}_{k+1,\ell-1}-\nu^{\tri}_{k+2,\ell}+\nu^{\tri}_{k+2,\ell-1}\,.
\end{align*}
Since $(k+1,\ell)\not\in U^{\tri}_{k+1,\ell}$, we have  
$U^{\tri}_{k+1,\ell}=U^{\tri}_{k+1,\ell-1}\cup U^{\tri}_{k+2,\ell}$. 
Hence, we obtain
\begin{align*}
\sum_{u=k+1}^{\ell-1}(c_u-1)-\nu^{\tri}_{k+1,\ell}
&=\sum_{u=k+1}^{\ell-1}(c_u-1)-\nu^{\tri}_{k+1,\ell-1}-\nu^{\tri}_{k+2,\ell}+\nu^{\tri}_{k+2,\ell-1}\,.
\end{align*}
Therefore, 
$\deg \zeta_{k,\ell}=\sum\limits_{u=k+1}^{\ell-1}(c_u-1)-\nu^{\tri}_{k+1,\ell}$\,. 
\end{proof}
\subsection{Occurrences of 
\texorpdfstring{$1$}{\it 1} in CCFs and descendant degrees 
in associated triangulations}
\begin{prop}\label{prop:count1_desendant} 
The number of occurrences of $1$ in the triangular region of 
a CCF is given by the accumulated descendant degrees of the associated triangulation of an $n$-gon as follows. 
For $(i,j)\in D_{n+1}$, we have 
\begin{align}
\nu^{\tri}_{i,j}=
\begin{cases}
\ell_{i,j}& \text{if $i\leq j\leq i+n-2$}\,, \\
\ell_{i,i+n-1}+1=n-2& \text{if $j=i+n-1$}\,,\\
\ell_{i,i+n}+c_i+1=n-2+c_i& \text{if $j=i+n$}\,,\\
\ell_{i,i+n+1}+c_i+c_{i+1}+1=n-2+c_i+c_{i+1}& \text{if $j=i+n+1$}\,, 
\end{cases}
\end{align}
where $\ell_{i,j}$ extends the quantity defined in \eqref{eqn:descendant_sum} 
of Theorem~\ref{prop:q_CCF_weighted_Farey} by 
$\ell_{i,i+n}=\ell_{i,i+n-1}+\mu_{i,i+n}$ and 
$\ell_{i,i+n+1}=\ell_{i,i+n}+\mu_{i,i+n+1}$. 
Since $\mu_{i,i+n}=\mu_{i,i}=0$ and 
$\mu_{i,i+n+1}=\mu_{i,i+1}=0$, we have 
$\ell_{i,i+n-1}=\ell_{i,i+n}=\ell_{i,i+n+1}$. 
\end{prop}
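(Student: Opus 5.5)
The plan is to reduce everything to Lemma \ref{lemm:count1_trianglation}, which already expresses $\nu^{\tri}_{i,j}$ through diagonal counts. It therefore suffices to identify $\ell_{i,j}$ with $\delta(V[i,j])$ for $i\leq j\leq i+n-2$. The boundary rows $j=i+n-1,i+n,i+n+1$ then follow by evaluating $\ell_{i,i+n-1}$ explicitly.

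\textbf{Step 1: the range $i\leq j\leq i+n-2$.} I will show $\ell_{i,j}=\delta(V[i,j])$ by induction on $j$, starting from $\ell_{i,i-1}=0$ and $\ell_{i,i}=0=\delta(V[i,i])$. For the inductive step, $\delta(V[i,j])-\delta(V[i,j-1])$ counts the diagonals of $\T$ joining $v_j$ to some $v_\alpha$ with $i\leq\alpha\leq j-1$. The edge $v_{j-1}v_j$ is not a diagonal, so these are exactly the diagonals $v_\alpha v_j$ with $v_\alpha\in V[i,j-2]$. By \eqref{another_def:descendant}, their number is $\mu_{i,j}$. Hence
\[
\delta(V[i,j])-\delta(V[i,j-1])=\mu_{i,j}=\ell_{i,j}-\ell_{i,j-1}.
\]
Combined with Lemma \ref{lemm:count1_trianglation} (first case), this gives $\nu^{\tri}_{i,j}=\ell_{i,j}$.

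\textbf{Step 2: computing $\ell_{i,i+n-1}$.} From the proof of Theorem \ref{prop:q_CCF_weighted_Farey} (equation \eqref{eqn:ell_i+n-1}), $\ell_{i,i+n-1}=n-3$. Alternatively, $\ell_{i,i+n-2}=\delta(V[i,i+n-2])$ counts all diagonals except those incident to $v_{i-1}$, so it equals $n-3-(c_{i-1}-1)$. Adding $\mu_{i,i+n-1}=c_{i-1}-1$ again gives $n-3$. Lemma \ref{lemm:count1_trianglation} gives $\nu^{\tri}_{i,i+n-1}=n-2=\ell_{i,i+n-1}+1$.

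\textbf{Step 3: the extended indices $i+n$ and $i+n+1$.} By the extension convention stated in the proposition, $\mu_{i,i+n}=\mu_{i,i}=0$ and $\mu_{i,i+n+1}=\mu_{i,i+1}=0$. Therefore $\ell_{i,i+n}=\ell_{i,i+n+1}=n-3$. The remaining two cases then follow directly from the values $n-2+c_i$ and $n-2+c_i+c_{i+1}$ in Lemma \ref{lemm:count1_trianglation}.

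The only delicate point is the inductive step in Step 1. One must check that \eqref{another_def:descendant} really counts diagonals from $v_j$ back into $V[i,j-2]$, and not diagonals going forward or to $v_{i-1}$. For $j\leq i+n-2$ the vertex $v_{i-1}$ lies outside $V[i,j]$, so no wraparound issue arises. That leaves the index bookkeeping as the main thing to verify.
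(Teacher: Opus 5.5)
Your proof is correct, but it does not follow the proof given right after the statement. For $i\le j\le i+n-2$ the paper reads $\ell_{i,j}$ off as a trailing degree. Theorem~\ref{prop:q_CCF_weighted_Farey} gives $\zeta_{i,j+1}=q^{\ell_{i,j}}s_{i,j+1}$, Proposition~\ref{prop:weighted_Farey_graph_degree}~(2) gives $\deg_{\min}s_{i,j+1}=0$, and Proposition~\ref{prop:q_CCF_min_deg} gives $\deg_{\min}\zeta_{i,j+1}=\nu^{\tri}_{i,j}$. The boundary rows then use $\ell_{i,i+n-1}=n-3$ from \eqref{eqn:ell_i+n-1} together with Lemma~\ref{lemm:count1_trianglation}, exactly as in your Steps 2 and 3.

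Your combinatorial argument is essentially the alternative proof in Appendix~\ref{sect:another_proof_cor}, with one variation. The appendix splits $U^{\tri}_{i,j}$ into the anti-diagonal pieces $U^{\ad}_{i,u}$ and uses the $\nu^{\ad}$ case of Lemma~\ref{lemm:count1_trianglation}. You instead use its $\nu^{\tri}$ case and telescope $\delta(V[i,j])$ yourself, via $\delta(V[i,j])-\delta(V[i,j-1])=\delta(V[i,j-2],\{v_j\})$. This has two small advantages:
\begin{itemize}
\item The $\nu^{\tri}$ case is the one the paper proves in detail; the $\nu^{\ad}$ case is left as ``analogous,'' and your telescoping step is precisely what derives it.
\item Your count $\ell_{i,i+n-2}=n-3-(c_{i-1}-1)$ has the correct index, whereas the appendix writes $c_i$ for $c_{i-1}$ there.
\end{itemize}
What the main-text proof buys is the interpretation $\ell_{i,j}=\deg_{\min}\zeta_{i,j+1}$, which ties the $q$-shift between the $q$-CCF and the $q$-Farey labeling directly to the count of $1$'s. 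Your route shows that the identity itself is a classical statement about triangulations and needs none of the $q$-machinery.
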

\begin{proof} Suppose $i\leq j\leq i+n-2$. 
Theorem~\ref{prop:q_CCF_weighted_Farey} gives
$\zeta_{i,j+1}=q^{\ell_{i,j}}s_{i,j+1}$. This identity also remains valid for 
$j=i+n-1$, where $s_{i,i+n}=0$. 
Proposition \ref{prop:weighted_Farey_graph_degree} (2) implies that 
$\deg_{\min}s_{i,j+1}=0$ for $i\leq j\leq i+n-2$. 
Therefore, $\deg_{\min}\zeta_{i,j+1}=\ell_{i,j}$. 
Proposition \ref{prop:q_CCF_min_deg} identifies
$\deg_{\min}\zeta_{i,j+1}$ with $\nu^{\tri}_{i,j}$.
Consequently, $\nu^{\tri}_{i,j}=\ell_{i,j}$. 
\par
Finally, consider the cases $j=i+n-1,i+n,i+n+1$.
By \eqref{eqn:ell_i+n-1} in the proof of 
Theorem~\ref{prop:q_CCF_weighted_Farey}, we have 
$\ell_{i,i+n-1}=\ell_{i,i+n}=\ell_{i,i+n+1}=n-3$. 
Lemma \ref{lemm:count1_trianglation} yields
\begin{align*}
\nu^{\tri}_{i,i+n-1}=\ell_{i,i+n-1}+1,\quad
\nu^{\tri}_{i,i+n}=\ell_{i,i+n}+c_i+1,\quad
\nu^{\tri}_{i,i+n+1}=\ell_{i,i+n+1}+c_i+c_{i+1}+1.
\end{align*}
\end{proof}
\par
Proposition \ref{prop:count1_desendant} 
immediately implies the following relation 
between the occurrences of the entry $1$ and descendant degrees. 
\begin{cor}\label{cor:occ1_descendant_deg} 
For $(i,j)\in D_{n+1}$, we have 
\begin{align*}
\nu^{\tri}_{i,j}-\nu^{\tri}_{i,j-1}=
\begin{cases}
\mu_{i,j}&\text{if $i\leq j\leq i+n-2$},\\
\mu_{i,i+n-1}+1=c_{i-1}&\text{if $j=i+n-1$},\\
\mu_{i,j}+c_j=c_j&\text{if $j=i+n$ or $j=i+n+1$},
\end{cases}
\end{align*}
where we set $\nu^{\tri}_{i,i-1}=0$, equivalently, $U^{\tri}_{i,i-1}=\phi$.
\end{cor}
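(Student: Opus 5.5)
The plan is to obtain the corollary by taking successive differences in Proposition \ref{prop:count1_desendant}. Throughout, $(i,j)\in D_{n+1}$, so $j$ ranges over $i\le j\le i+n+1$, and we use the convention $\nu^{\tri}_{i,i-1}=0=\ell_{i,i-1}$.

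\textbf{The range $i\le j\le i+n-2$.} Here both $j$ and $j-1$ lie in the range $i-1\le j-1\le i+n-3$ where Proposition \ref{prop:count1_desendant} gives $\nu^{\tri}_{i,j}=\ell_{i,j}$. For $j-1=i-1$ this holds by the convention $\nu^{\tri}_{i,i-1}=0=\ell_{i,i-1}$. Hence
$$\nu^{\tri}_{i,j}-\nu^{\tri}_{i,j-1}=\ell_{i,j}-\ell_{i,j-1}=\mu_{i,j}$$
by the recursion \eqref{eqn:descendant_sum} of Theorem \ref{prop:q_CCF_weighted_Farey}.

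\textbf{The case $j=i+n-1$.} Here $\nu^{\tri}_{i,i+n-1}=\ell_{i,i+n-1}+1$ and $\nu^{\tri}_{i,i+n-2}=\ell_{i,i+n-2}$. The difference is therefore $\mu_{i,i+n-1}+1$. Since $\mu_{i,i+n-1}=c_{i-1}-1$, noted after Definition \ref{def:desc_deg}, this equals $c_{i-1}$.

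As a cross-check, this agrees with \eqref{eqn:ell_i+n-2} and \eqref{eqn:ell_i+n-1}, which give $\ell_{i,i+n-2}=n-2-c_{i-1}$ and $\ell_{i,i+n-1}=n-3$. It also agrees with Lemma \ref{lemm:count1_trianglation}, which gives $\nu^{\tri}_{i,i+n-1}=n-2$. Then $n-2-(n-2-c_{i-1})=c_{i-1}$.

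\textbf{The cases $j=i+n$ and $j=i+n+1$.} We use $\ell_{i,i+n-1}=\ell_{i,i+n}=\ell_{i,i+n+1}$ and $\mu_{i,i+n}=\mu_{i,i+n+1}=0$.

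For $j=i+n$,
$$\nu^{\tri}_{i,i+n}-\nu^{\tri}_{i,i+n-1}=(n-2+c_i)-(n-2)=c_i=c_{i+n}=\mu_{i,i+n}+c_{i+n}.$$
For $j=i+n+1$,
$$\nu^{\tri}_{i,i+n+1}-\nu^{\tri}_{i,i+n}=c_{i+1}=c_{i+n+1}=\mu_{i,i+n+1}+c_{i+n+1}.$$

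\textbf{Main point of care.} There is no real obstacle, since the statement is a formal consequence of Proposition \ref{prop:count1_desendant}. The only step needing attention is the boundary $j=i$, which relies on the convention $\nu^{\tri}_{i,i-1}=0$ and on $\mu_{i,i}=0$. The other check is that periodicity $c_{u+n}=c_u$ is what converts $c_i,c_{i+1}$ into $c_j$ in the last two cases.
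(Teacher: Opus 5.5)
Your proposal is correct and takes essentially the same route as the paper. Both proofs take successive differences of Proposition \ref{prop:count1_desendant}, using the convention $\nu^{\tri}_{i,i-1}=\ell_{i,i-1}=0$, the recursion $\ell_{i,j}-\ell_{i,j-1}=\mu_{i,j}$, the values $\mu_{i,i+n-1}=c_{i-1}-1$ and $\mu_{i,i+n}=\mu_{i,i+n+1}=0$, and the counts from Lemma \ref{lemm:count1_trianglation}; for $j=i+n-1$ the paper computes $(n-2)-(n-2-c_{i-1})$ directly, which is exactly your cross-check.
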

\begin{proof}
For $i\leq j\leq i+n-2$, the definition \eqref{eqn:descendant_sum} of $\ell_{i,j}$ 
together with Proposition \ref{prop:count1_desendant} gives 
\begin{align*}
\nu^{\tri}_{i,j}-\nu^{\tri}_{i,j-1}=\ell_{i,j}-\ell_{i,j-1}=\mu_{i,j}\,.
\end{align*}  
\par
Next consider the case $j=i+n-1$. Lemma \ref{lemm:count1_trianglation} yields $\nu^{\tri}_{i,j}=n-2$. 
Moreover, Proposition \ref{prop:count1_desendant} and \eqref{eqn:ell_i+n-2} in the proof 
of Theorem~\ref{prop:q_CCF_weighted_Farey} imply that  
$\nu^{\tri}_{i,j-1}=\ell_{i,j-1}=\ell_{i,i+n-2}=n-2-c_{i-1}$. 
Therefore, 
\begin{align*}
\nu^{\tri}_{i,j}-\nu^{\tri}_{i,j-1}=c_{i-1}\,.
\end{align*}
Since $\mu_{i,i+n-1}=\mu_{i,i-1}=c_{i-1}-1$, the result follows. 
\par
Finally, let $j=i+n$ or $j=i+n+1$. 
Proposition \ref{prop:count1_desendant} shows that $\ell_{i,i+n-1}=\ell_{i,i+n}=\ell_{i,i+n+1}$. 
Since $\mu_{i,i+n}=\mu_{i,i}=0$, the conclusion follows immediately. 
\end{proof}
\vskip.2cm
\noindent
Note that, in Appendix \ref{sect:another_proof_cor}, 
we provide alternative proofs 
of Proposition \ref{prop:count1_desendant} 
and Corollary \ref{cor:occ1_descendant_deg}, 
based more directly on the relation between the number of occurrences of the entry $1$ in a CCF and 
the number of diagonals in the associated triangulation. 
\subsection{Degrees of numerator and denominator polynomials in \texorpdfstring{$q$}{\it q}-Farey labelings}
The following proposition determines the minimum and maximum degrees 
of the numerators and denominators in the $q$-Farey labeling.
\begin{prop}
Let $\Farey_q(\T,i)=(\theta_{i,j})_{i-1\leq j\leq i+n-2}$ with 
$\theta_{i,j}=\dfrac{r_{i,j+1}}{s_{i,j+1}}$ be the $q$-Farey labeling of 
the triangulation $(\T,i)$ of an $n$-gon. 
\begin{enumerate}[{(}1{)}] 
\item For the numerators $r_{i,j+1}$ with $i-1\leq j\leq i+n-3$, we have
\begin{align}
&\deg_{\min}r_{i,j+1}
=\nu^{\tri}_{i-1,j}-\nu^{\tri}_{i,j}
=\nu^{\md}_{i-1,j}\,,\nonumber\\
&\deg r_{i,j+1}
=\sum_{u=i}^j(c_u-1)-\nu^{\tri}_{i,j+1}-\nu^{\tri}_{i,j}
=\nu^{\md}_{i-1,j}+\nu^{\rec}_{i,j,i+n-2}\,.
\end{align}
In particular, for $j=i+n-2$, $r_{i,i+n-1}=0$.  
\item For the denominators $s_{i,j+1}$ with $i\leq j\leq i+n-2$, we have
\begin{align}
&\deg_{\min}s_{i,j+1}=0\,,\nonumber\\
&\deg s_{i,j+1}
=\sum_{u=i+1}^j(c_u-1)-\nu^{\tri}_{i+1,j+1}-\nu^{\tri}_{i,j}
=\nu^{\rec}_{i+1,j,i+n-1}\,.
\end{align}
In particular, for $j=i-1$, $s_{j+1}=s_i=0$.   
\end{enumerate}
\end{prop}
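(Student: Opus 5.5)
The plan is to combine Theorem \ref{prop:q_CCF_weighted_Farey}, Proposition \ref{prop:q_CCF_min_deg} and Proposition \ref{prop:count1_desendant}. By Theorem \ref{prop:q_CCF_weighted_Farey} we have $r_{i,j+1}=q^{-\ell_{i,j}}\zeta_{i-1,j+1}$ and $s_{i,j+1}=q^{-\ell_{i,j}}\zeta_{i,j+1}$. Hence every trailing or leading degree of $r_{i,j+1}$ or $s_{i,j+1}$ equals the corresponding degree of a $q$-CCF entry minus $\ell_{i,j}$. For $i-1\le j\le i+n-2$, Proposition \ref{prop:count1_desendant} gives $\ell_{i,j}=\nu^{\tri}_{i,j}$; for $j=i-1$ both sides are $0$.

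For (1), take $k=i-1$ and $\ell=j+1$ in Proposition \ref{prop:q_CCF_min_deg}; this is admissible since $i\le j+1\le i+n-2$. It gives $\deg_{\min}\zeta_{i-1,j+1}=\nu^{\tri}_{i-1,j}$ and $\deg\zeta_{i-1,j+1}=\sum_{u=i}^{j}(c_u-1)-\nu^{\tri}_{i,j+1}$. Subtracting $\nu^{\tri}_{i,j}$ yields the first expressions in (1).

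For (2), take $k=i$ and $\ell=j+1$ with $i+1\le j+1\le i+n-1$. This gives $\deg_{\min}\zeta_{i,j+1}=\nu^{\tri}_{i,j}$, so $\deg_{\min}s_{i,j+1}=0$, consistent with Proposition \ref{prop:weighted_Farey_graph_degree} (2). It also gives $\deg s_{i,j+1}=\sum_{u=i+1}^{j}(c_u-1)-\nu^{\tri}_{i+1,j+1}-\nu^{\tri}_{i,j}$. At $j=i+n-2$ we have $s=1$, and the formula should be checked directly against Lemma \ref{lemm:count1_trianglation}. The special values $r_{i,i+n-1}=0$ and $s_{i,i}=0$ come straight from the definition of the $q$-Farey labeling.

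Next, convert the differences of triangular counts into $\nu^{\md}$ and $\nu^{\rec}$.

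\emph{Diagonal region.} The set $U^{\tri}_{i-1,j}$ is the disjoint union of $U^{\tri}_{i,j}$ and the pairs $(i-1,b)$ with $i+1\le b\le j$ and $\sigma_{i-1,b}=1$, that is, $U^{\md}_{i-1,j}$. Hence $\nu^{\tri}_{i-1,j}-\nu^{\tri}_{i,j}=\nu^{\md}_{i-1,j}$.

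\emph{Rectangular region, numerator.} Apply Lemma \ref{lem:occ1} in its first case ($i-1\le j\le i+n-3$):
\[
\sum_{u=i}^j(c_u-1)=\nu^{\tri}_{i-1,j}+\nu^{\tri}_{i,j+1}+\nu^{\rec}_{i,j,i+n-2}.
\]
Substituting this and cancelling $\nu^{\tri}_{i,j+1}$ gives $\deg r_{i,j+1}=\nu^{\tri}_{i-1,j}-\nu^{\tri}_{i,j}+\nu^{\rec}_{i,j,i+n-2}=\nu^{\md}_{i-1,j}+\nu^{\rec}_{i,j,i+n-2}$.

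\emph{Rectangular region, denominator.} Apply the same lemma shifted to start at $i+1$: for $i\le j\le i+n-2$,
\[
\sum_{u=i+1}^j(c_u-1)=\nu^{\tri}_{i,j}+\nu^{\tri}_{i+1,j+1}+\nu^{\rec}_{i+1,j,i+n-1}.
\]
This lies in the first case, since $j\le (i+1)+n-3$. It immediately gives $\deg s_{i,j+1}=\nu^{\rec}_{i+1,j,i+n-1}$.

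The main obstacle is the boundary bookkeeping. The pure $\nu$-formulas must be checked at the extreme indices where Lemma \ref{lem:occ1} or Lemma \ref{lemm:count1_trianglation} changes case. For $j=i-1$ in (1), $r_{i,i}=1$ and all counts vanish. For $j=i+n-2$ in (2), $s_{i,i+n-1}=1$, so the rectangular count $\nu^{\rec}_{i+1,i+n-2,i+n-1}$ must vanish. It does: $D^{\rec}_{i+1,i+n-2,i+n-1}$ is empty because $u+2=i+n>\ell=i+n-1$. One must also use the convention $\ell_{i,i-1}=0=\nu^{\tri}_{i,i-1}$.

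Finally, one must confirm that Proposition \ref{prop:q_CCF_min_deg} applies to the nonvanishing entries. In the range $j\le i+n-3$ the entries $\zeta_{i-1,j+1}$ and $\zeta_{i,j+1}$ are nonzero by Proposition \ref{prop:q_CCF_q_rational}, so their degrees are well defined.
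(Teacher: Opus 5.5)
Your proof is correct and follows essentially the same route as the paper. You write $r_{i,j+1}=q^{-\ell_{i,j}}\zeta_{i-1,j+1}$ and $s_{i,j+1}=q^{-\ell_{i,j}}\zeta_{i,j+1}$ via Theorem~\ref{prop:q_CCF_weighted_Farey}, read off the degrees from Proposition~\ref{prop:q_CCF_min_deg}, replace $\ell_{i,j}$ by $\nu^{\tri}_{i,j}$ using Proposition~\ref{prop:count1_desendant}, and use $U^{\tri}_{i-1,j}\setminus U^{\tri}_{i,j}=U^{\md}_{i-1,j}$.

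The differences are cosmetic. You apply Lemma~\ref{lem:occ1} directly to get the $\nu^{\rec}$ forms, whereas the paper cites the second equality of Proposition~\ref{prop:q_CCF_min_deg}, which comes from that same lemma. You also obtain $\deg_{\min}s_{i,j+1}=0$ from the degree computation instead of citing Proposition~\ref{prop:weighted_Farey_graph_degree}~(2).
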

\begin{proof} We use the abbreviations $r_{i,j+1}=r_{j+1}$ and $s_{i,j+1}=s_{j+1}$. 
\par
\noindent
(1) Suppose $i-1\leq j\leq i+n-3$. 
By Proposition \ref{prop:q_CCF_min_deg}, we have 
\begin{align*}
&\deg_{\min}\zeta_{i-1,j+1}=\nu^{\tri}_{i-1,j}\,,\quad 
\deg \zeta_{i-1,j+1}=\sum_{u=i}^j(c_u-1)-\nu^{\tri}_{i,j+1}=\nu^{\tri}_{i-1,j}+\nu^{\rec}_{i,j,i+n-2}\,.
\end{align*}
Since $\zeta_{i-1,j+1}=q^{\ell_{i,j}}r_{j+1}$ by Theorem~\ref{prop:q_CCF_weighted_Farey}, 
and $\ell_{i,j}=\nu^{\tri}_{i,j}$ by Proposition \ref{prop:count1_desendant}, it follows that
\begin{align*}
&\deg_{\min}r_{j+1}=\nu^{\tri}_{i-1,j}-\ell_{i,j}=\nu^{\tri}_{i-1,j}-\nu^{\tri}_{i,j}\,,\\
&\deg r_{j+1}=\sum_{u=i}^j(c_u-1)-\nu^{\tri}_{i,j+1}-\ell_{i,j}=\sum_{u=i}^j(c_u-1)-\nu^{\tri}_{i,j+1}-\nu^{\tri}_{i,j}=\nu^{\tri}_{i-1,j}+\nu^{\rec}_{i,j,i+n-2}-\nu^{\tri}_{i,j}\,.
\end{align*}
Since, by Definition \ref{def:occ1}, 
$U^{\tri}_{i-1,j}\backslash U^{\tri}_{i,j}=U^{\md}_{i-1,j}$, that is, 
$\nu^{\tri}_{i-1,j}-\nu^{\tri}_{i,j}=\nu^{\md}_{i-1,j}$, 
the statement (1) holds. 
\par
\noindent
(2) Suppose $i\leq j\leq i+n-2$. 
By Proposition~\ref{prop:weighted_Farey_graph_degree}~(2), 
we have $\deg_{\min}s_{j+1}=0$. 
Moreover, Proposition~\ref{prop:q_CCF_min_deg} gives 
\[
\deg \zeta_{i,j+1}
=\sum_{u=i+1}^j(c_u-1)-\nu^{\tri}_{i+1,j+1}
=\nu^{\tri}_{i,j}+\nu^{\rec}_{i+1,j,i+n-1}.
\]
Since $\zeta_{i,j+1}=q^{\ell_{i,j}}s_{j+1}$ by Theorem~\ref{prop:q_CCF_weighted_Farey}, 
and $\ell_{i,j}=\nu^{\tri}_{i,j}$ by Proposition~\ref{prop:count1_desendant}, it follows that
\begin{align*}
\deg s_{j+1}
&=\sum_{u=i+1}^j(c_u-1)
-\nu^{\tri}_{i+1,j+1}
-\ell_{i,j}=\sum_{u=i+1}^j(c_u-1)
-\nu^{\tri}_{i+1,j+1}
-\nu^{\tri}_{i,j}
=\nu^{\rec}_{i+1,j,i+n-1}.
\end{align*}
This proves~(2). 
\end{proof}

%%% Appendix %%%%%%%%%
\section{Appendix}\label{Appendix}

\subsection{Proof of \eqref{eqn:q-mat_n-1} of Lemma \ref{lemm:n-1_q}}\label{sect:proof_lem}
\begin{proof}
We argue by induction on $n$. 
\par
For $n=3$, 
the quiddity of a triangle is uniquely $(1,1,1)$, and a direct computation gives 
\[
M_q(1,1,1)=-I\,.
\]
\par
For $n=4$, the quiddity is either $(1,2,1,2)$ or $(2,1,2,1)$, and a direct computation shows that 
\[
M_q(1,2,1,2)=M_q(2,1,2,1)=-qI\,.
\]
\par
Assume that the claim holds for $n\leq m$ with $m\geq4$, 
and consider $n=m+1$. 
By Proposition \ref{prop:quiddity} (2), 
there exist at least two non-adjacent indices $\alpha,\beta$ with $i\leq \alpha,\beta\leq i+n-1$ 
such that $c_{\alpha}=c_{\beta}=1$. 
Hence, we may choose $\alpha$ with $i<\alpha<i+n-1$. 
Then the cell ${\mathcal C}=v_{\alpha-1}v_{\alpha}v_{\alpha+1}$ is an exterior cell 
with $c_{\alpha-1},c_{\alpha+1}\geq 2$.  
Let $\T'$ be the triangulation of the $n-1$-gon obtained by removing the exterior cell 
${\mathcal C}$ from the $n$-gon $\T$. Its quiddity is
\begin{align*}
\quid(\T')=
(c_0,c_1,\dots,c_{\alpha-1}-1,\hspace{-.1cm}\raisebox{.3cm}{$\overset{\alpha}{\vee}$}
c_{\alpha+1}-1,c_{\alpha+2},\dots,c_{m})\,.
\end{align*}
By the induction hypothesis, 
\begin{align}\label{eqn:induc_assump}
M_q(c_0,c_1,\dots,c_{\alpha-1}-1,\hspace{-.1cm}\raisebox{.3cm}{$\overset{\alpha}{\vee}$}
c_{\alpha+1}-1,c_{\alpha+2},\dots,c_{m})=
-q^{m-3}I\,.
\end{align}
A straightforward computation shows that for $a,b\geq 2$,  
$M_q(a,1,b)=qM_q(a-1,b-1)$. 
Applying this relation at index $\alpha$, we obtain 
\begin{align*}
M_q(c_0,c_1,\dots,c_{\alpha-1},c_{\alpha},c_{\alpha+1},c_{\alpha+2},\dots,c_{m})
&=M_q(c_0,c_1,\dots,c_{\alpha-1},1,c_{\alpha+1},c_{\alpha+2},\dots,c_{m})\\
&=q
M_q(c_0,c_1,\dots,c_{\alpha-1}-1,\hspace{-.1cm}\raisebox{.3cm}{$\overset{\alpha}{\vee}$}
\hspace{-.1cm}c_{\alpha+1}-1,c_{\alpha+2},\dots,c_{m})\\
&=-q^{m-2}I\,.
\end{align*}
This completes the induction. 
\end{proof}
\subsection{Another example of correspondence between \texorpdfstring{$q$}{\it q}-CCFs and \texorpdfstring{$q$}{\it q}-Farey labelings}
\label{sect:q_CCF_weighted_Farey_2}
\begin{example}\label{ex:q_CCF_weighted_Farey_2}
We consider the triangulation $\T$ of a $9$-gon introduced in Example \ref{ex:triangular}, 
using the edge $v_6v_7$ as the base edge, instead of the choice in Example \ref{ex:q_CCF_Farey_graph1}. 
Figure \ref{fig:q_CCF_Farey_graph2} illustrates the $q$-Farey labeling 
$\Farey_q(\T,7)=(\theta_{7,j})_{7\leq j\leq 15}$.   
%\newpage
\par
\begin{figure}[htbp]
\begin{center}
\begin{tikzpicture}[scale=.8]
\coordinate(v0)at(0,0);
\coordinate(v1)at(-3,0);
\coordinate(v2)at(-5,2);
\coordinate(v3)at(-5.5,4.5);
\coordinate(v4)at(-3.5,6.5);
\coordinate(v5)at(-1,7);
\coordinate(v6)at(1.6,6.2);
\coordinate(v7)at(2.5,4);
\coordinate(v8)at(2.2,2);
\filldraw [black] (v0) circle (1pt) node[below]{\hspace{3.5cm}$v_9\left(
\theta_{7,9}=\dfrac{\langle5\rangle}{[3]}\right), 
\begin{array}{l}
\mu_{7,9}=0\\
\ell_{7,9}=0
\end{array}
$};
\filldraw [black] (v1) circle (1pt) node[below]{
\hspace{-1.3cm}$v_{10}
\left(\theta_{7,10}=\dfrac{[3]}{[2]}\right),
\begin{array}{l}
\mu_{7,10}=1\\
\ell_{7,10}=1
\end{array}
$};
\filldraw [black] (v2) circle (1pt) node[below left]{
$v_{11}
\left(\theta_{7,11}=\dfrac{\langle7\rangle}{\langle5\rangle}\right),
\begin{array}{l}
\mu_{7,11}=0\\
\ell_{7,11}=1
\end{array}\hspace{-.2cm}
$};
\filldraw [black] (v3) circle (1pt) node[left]{
$v_{12}\left(\theta_{7,12}=\dfrac{\langle11\rangle}{\langle8\rangle}\right),
\begin{array}{l}
\mu_{7,12}=0\\
\ell_{7,12}=1
\end{array}\hspace{-.2cm}
$};
\filldraw [black] (v4) circle (1pt) node[above left]{
$v_{13}\left(\theta_{7,13}=\dfrac{[4]}{[3]}\right),
\begin{array}{l}
\mu_{7,13}=2\\
\ell_{7,13}=3
\end{array}
$};
\filldraw [black] (v5) circle (1pt) node[above]{
\raisebox{.5cm}{\hspace{.5cm}$v_{14}\left(\theta_{7,14}=\dfrac{1}{1}\right),
\begin{array}{l}
\mu_{7,14}=3\\
\ell_{7,14}=6
\end{array}
$}};
\filldraw [black] (v6) circle (1pt) node[right]{\hspace{-.1cm}
\raisebox{1cm}{$v_{15}\left(\theta_{7,15}=\dfrac{0}{1}\right),
\begin{array}{l}
\ell_{7,6}=0,\\
\mu_{7,15}=0,\\
\ell_{7,15}=6
\end{array}
$}};
\filldraw [black] (v7) circle (1pt) node[right]{
\raisebox{-1cm}{$v_7
\Biggl(
\begin{array}{l}
\theta_{7,7}=\dfrac{1}{0}\\
\theta_{7,16}=\dfrac{-1}{-0}
\end{array}
\Biggr),
\begin{array}{l}
\mu_{7,7}=0\\
\ell_{7,7}=0
\end{array}$}
};
\filldraw [black] (v8) circle (1pt) node[right]{
\raisebox{-1.5cm}{$v_8\left(\theta_{7,8}=\dfrac{[2]}{1}\right),
\begin{array}{l}
\mu_{7,8}=0\\
\ell_{7,8}=0
\end{array}
$}};
\draw (v0)--(v1) node[midway, above]{\hspace{1cm}$1\hspace{.5cm}\raisebox{.2cm}{\maru{$3$}}$};
\draw (v1)--(v2) node[midway, above]{\raisebox{.4cm}{$q^2\raisebox{1cm}{\maru{$4$}}$}};
\draw (v1)--(v4) node[midway, right]{$q\raisebox{1cm}{\maru{$3$}}$};
\draw (v1)--(v5)node[midway, right]{$1\hspace{.5cm}\maru{$2$}$};
\draw (v1)--(v8)node[midway, above left]{$q$};
\draw (v2)--(v3)node[midway, left]{$q$};
\draw (v2)--(v4)node[midway, left]{$\maru{$5$}\hspace{.2cm}1\hspace{-.1cm}$};
\draw (v3)--(v4)node[midway, above]{$1$};
\draw (v4)--(v5)node[midway, above]{$1$};
\draw (v5)--(v6)node[midway, above]{$1$};
\draw (v5)--(v7)node[midway, above]{$\quad 1\;\maru{$0$}$};
\draw (v5)--(v8)node[midway, right]{$1\;\maru{$1$}$};
\draw (v6)--(v7)node[midway, right]{$q^{-1}$};
\draw (v7)--(v8)node[midway, right]{$q$};
\draw (v8)--(v0)node[midway, right]{$q^2$};
\end{tikzpicture}
\caption{$q$-Farey labeling $\Farey_q(\T,7)$ and descendant degrees 
for the triangulation $(\T,7)$ with quiddity $\quid(\T)=(1,4,2,1,3,4,1,2,3)$}\label{fig:q_CCF_Farey_graph2}
\end{center}
\end{figure}
\par
As in Example \ref{ex:q_CCF_Farey_graph1}, 
Proposition \ref{prop:q_CCF_q_rational}, and Theorem \ref{prop:q_CCF_weighted_Farey} yield 
the following correspondence between $q$-continued fractions, 
the entries of the associated $q$-CCF, and the $q$-Farey labeling. 
\begin{align*}
&[[\quad]]_q=\dfrac{\zeta_{7,6}}{\zeta_{8,6}}=\dfrac{q^{\ell_{7,6}}r_7}{q^{\ell_{7,6}}s_7}
=\dfrac{q^0\cdot1}{q^0\cdot0}=\dfrac{1}{0}\\
&[[c_7]]_q=[[2]]_q=\dfrac{\zeta_{7,7}}{\zeta_{8,7}}
=\dfrac{q^{\ell_{7,7}}r_8}{q^{\ell_{7,7}}s_8}
=\dfrac{q^0\cdot[2]}{q^0\cdot1}=\dfrac{[2]}{1}\\
&[[c_7,c_8]]_q=[[2,3]]_q
=\dfrac{\zeta_{7,8}}{\zeta_{8,8}}
=\dfrac{q^{\ell_{7,8}}r_9}{q^{\ell_{7,8}}s_9}
=\dfrac{q^0\cdot\langle5\rangle}{q^0\cdot[3]}=\dfrac{\langle5\rangle}{[3]}\\
&[[c_7,c_8,c_9]]_q=[[2,3,1]]_q=\dfrac{\zeta_{7,9}}{\zeta_{8,9}}
=\dfrac{q^{\ell_{7,9}}r_{10}}{q^{\ell_{7,9}}s_{10}}
=\dfrac{q^0\cdot[3]}{q^0\cdot[2]}=\dfrac{[3]}{[2]}\\
&[[c_7,c_8,c_{9},c_{10}]]_q=[[2,3,1,4]]_q=\dfrac{\zeta_{7,10}}{\zeta_{8,10}}
=\dfrac{q^{\ell_{7,10}}r_{11}}{q^{\ell_{7,10}}s_{11}}
=\dfrac{q\langle7\rangle}{q\langle5\rangle}\\
&[[c_7,c_8,c_9,c_{10},c_{11}]]_q=[[2,3,1,4,2]]_q=\dfrac{\zeta_{7,11}}{\zeta_{8,11}}
=\dfrac{q^{\ell_{7,11}}r_{12}}{q^{\ell_{7,11}}s_{12}}
=\dfrac{q\langle11\rangle}{q\langle8\rangle}\\
&[[c_7,c_8,c_9,c_{10},c_{11},c_{12}]]_q=[[2,3,1,4,2,1]]_q=\dfrac{\zeta_{7,12}}{\zeta_{8,12}}
=\dfrac{q^{\ell_{7,12}}r_{13}}{q^{\ell_{7,12}}s_{13}}
=\dfrac{q[4]}{q[3]}\\
\intertext{}
&[[c_7,c_8,c_9,c_{10},c_{11},c_{12},c_{13}]]_q=[[2,3,1,4,2,1,3]]_q=\dfrac{\zeta_{7,13}}{\zeta_{8,13}}
=\dfrac{q^{\ell_{7,13}}r_{14}}{q^{\ell_{7,13}}s_{14}}
=\dfrac{q^3\cdot1}{q^3\cdot1}=\dfrac{q^3}{q^3}\\
&[[c_7,c_8,c_9,c_{10},c_{11},c_{12},c_{13},c_{14}]]_q=[[2,3,1,4,2,1,3,4]]_q=\dfrac{\zeta_{7,14}}{\zeta_{8,14}}=\dfrac{q^{\ell_{7,14}}r_{15}}{q^{\ell_{7,14}}s_{15}}
=\dfrac{q^6\cdot0}{q^6\cdot1}=\dfrac{0}{q^6}\\
&[[c_7,c_8,c_9,c_{10},c_{11},c_{12},c_{13},c_{14},c_{15}]]_q=[[2,3,1,4,2,1,3,4,1]]_q=\dfrac{\zeta_{7,15}}{\zeta_{8,15}}=\dfrac{q^{\ell_{7,15}}r_{16}}{q^{\ell_{7,15}}s_{16}}
=\dfrac{q^6\cdot(-1)}{q^6\cdot(-0)}=\dfrac{-q^6}{0}
\end{align*}
\par
\end{example}

\subsection{Alternative proofs of Proposition \ref{prop:count1_desendant} and 
Corollary \ref{cor:occ1_descendant_deg}}\label{sect:another_proof_cor}
In the following alternative proofs, we repeatedly use 
Lemma \ref{lemm:count1_trianglation}. 
The statements follow by interpreting $\nu^{\tri}_{i,j}$ as the number of diagonals 
via Lemma \ref{lemm:count1_trianglation}.
\begin{proof}[Alternative proof of Proposition {\rm\ref{prop:count1_desendant}}]
From Definition \ref{def:occ1}, 
the sets $U^{\ad}_{i,u}$ with $i\le u\le j$ are disjoint and 
their union is $U^{\tri}_{i,j}$. Hence 
\[
\nu^{\tri}_{i,j}=\sum_{u=i}^j \nu^{\ad}_{i,u}.
\]
\par
Suppose $i\leq j\leq i+n-2$. 
Using Lemma \ref{lemm:count1_trianglation} and the definition 
of the descendant degree, we obtain
\begin{align*}
\nu^{\tri}_{i,j}
=\sum_{u=i}^j \delta(V[i,u-2],\{v_u\})
=\sum_{u=i}^j \mu_{i,u}
=\ell_{i,j}\,.
\end{align*}
\par
Suppose $j=i+n-1$. Lemma \ref{lemm:count1_trianglation} gives  
$\nu^{\tri}_{i,i+n-1}=\sum\limits_{u=i}^{i+n-2}\delta(V[i,u-2],\{v_u\})+c_{i-1}$. 
The sum $\sum\limits_{u=i}^{i+n-2}\delta(V[i,u-2],\{v_u\})$ counts the diagonals in $T$ that are not incident to $v_{i+n-1}=v_{i-1}$.  
A triangulation of an $n$-gon has $n-3$ diagonals, of which exactly $c_i-1$ are incident to $v_{i-1}$. Hence the sum equals $n-3-(c_i-1)=n-2-c_i$. 
\par
For $j=i+n$ and $j=i+n+1$, the claim follows directly from 
Lemma \ref{lemm:count1_trianglation} by counting the additional contributions as follows: 
\begin{align*}
\nu^{\tri}_{i,i+n}
&=\sum\limits_{u=i}^{i+n-2}\delta(V[i,u-2],\{v_u\})+c_{i-1}+c_i=n-2+c_i\,,\\
\nu^{\tri}_{i,i+n+1}
&=\sum\limits_{u=i}^{i+n-2}\delta(V[i,u-2],\{v_u\})+c_{i-1}+c_i+c_{i+1}=n-2+c_i+c_{i+1}\,.
\end{align*}
\end{proof}

\begin{proof}[Alternative proof of Corollary {\rm\ref{cor:occ1_descendant_deg}}]
By definition, $U^{\tri}_{i,j}\setminus U^{\tri}_{i,j-1}=U^{\ad}_{i,j}$, and hence
\[
\nu^{\tri}_{i,j}-\nu^{\tri}_{i,j-1}=\nu^{\ad}_{i,j}.
\]
Lemma \ref{lemm:count1_trianglation} implies that
\begin{align*}
\nu^{\tri}_{i,j}-\nu^{\tri}_{i,j-1}=\nu^{\ad}_{i,j}
=
\begin{cases}
\delta(V[i,j-2],\{v_j\})&\text{if $i\leq j\leq i+n-2$}\,,\\
c_j&\text{if $i+n-1\leq j\leq i+n+1$}\,. 
\end{cases}
\end{align*}
\par
For $i\leq j\leq i+n-2$, by \eqref{another_def:descendant} again, 
we obtain $\nu^{\tri}_{i,j}-\nu^{\tri}_{i,j-1}=\mu_{i,j}$. 
\par
For $j=i+n-1$, $\mu_{i,i+n-1}=\delta(V[i,i+n-3],\{v_{i+n-1}\})=c_{i-1}-1$. Hence, 
$\nu^{\tri}_{i,i+n-1}-\nu^{\tri}_{i,i+n-2}=c_{i-1}=\mu_{i,i+n-1}+1$. 
\par
For $j=i+n$ and $j=i+n+1$, the claim follows from
$\mu_{i,i+n}=\mu_{i,i}=0$ and $\mu_{i,i+n+1}=\mu_{i,i+1}=0$. 
\end{proof}

%　参考文献
\addcontentsline{toc}{section}{Reference}
\bibliographystyle{plain}
%\bibliography{sample}

\begin{thebibliography}{10}

\bibitem{assem2010friezes}
Ibrahim Assem, Christophe Reutenauer, and David Smith.
\newblock Friezes.
\newblock {\em Advances in Mathematics}, 225(6):3134--3165, 2010.

\bibitem{bapat2023q}
Asilata Bapat, Louis Becker, and Anthony~M Licata.
\newblock $q$-deformed rational numbers and the $2$-{C}alabi--{Y}au category of
  type ${A}_2$.
\newblock {\em Forum of Mathematics, Sigma}, 11:e47, 41 pp., 2023.

\bibitem{ccanakcci2018cluster}
{\.I}lke {\c{C}}anak{\c{c}}{\i} and Ralf Schiffler.
\newblock Cluster algebras and continued fractions.
\newblock {\em Compositio mathematica}, 154(3):565--593, 2018.

\bibitem{conley2023quiddities}
Charles~H Conley and Valentin Ovsienko.
\newblock Quiddities of polygon dissections and the {C}onway--{C}oxeter frieze
  equation.
\newblock {\em ANNALI SCUOLA NORMALE SUPERIORE-CLASSE DI SCIENZE},
  24(4):2125--2170, 2023.

\bibitem{conway1973triangulated}
John~H Conway and Harold~SM Coxeter.
\newblock Triangulated polygons and frieze patterns.
\newblock {\em The Mathematical Gazette}, 57(400):87--94, 1973.

\bibitem{conway1973triangulated_conti}
John~H Conway and Harold~SM Coxeter.
\newblock Triangulated polygons and frieze patterns, continued.
\newblock {\em The Mathematical Gazette}, 57(401):175--183, 1973.

\bibitem{coxeter1971frieze}
Harold~SM Coxeter.
\newblock Frieze patterns.
\newblock {\em Acta Arithmetica}, 18(1):297--310, 1971.

\bibitem{fock1999quantum}
Vladimir~V Fock and Leonid~O Chekhov.
\newblock A quantum {Teichm{\"u}ller} space.
\newblock {\em Theoretical and Mathematical Physics}, 120(3):1245--1259, 1999.

\bibitem{fomin2002cluster}
Sergey Fomin and Andrei Zelevinsky.
\newblock Cluster algebras $\mathrm{I}$: foundations.
\newblock {\em Journal of the American mathematical society}, 15(2):497--529,
  2002.

\bibitem{Fomin2002ClusterAI}
Sergey Fomin and Andrei Zelevinsky.
\newblock Cluster algebras $\mathrm{I}\hspace{-1.2pt} \mathrm{I}$: Finite type
  classification.
\newblock {\em Inventiones mathematicae}, 154:63--121, 2002.

\bibitem{fomin2007cluster}
Sergey Fomin and Andrei Zelevinsky.
\newblock Cluster algebras $\mathrm{I}\hspace{-1.2pt}\mathrm{V}$: coefficients.
\newblock {\em Compositio Mathematica}, 143(1):112--164, 2007.

\bibitem{kogiso2020q}
Takeyoshi Kogiso.
\newblock $q$-deformations and $t$-deformations of {Markov} triples, 2020.
\newblock \texttt{arXiv:2008.12913}.

\bibitem{kogiso2025arithmetic}
Takeyoshi Kogiso, Kengo Miyamoto, Xin Ren, Michihisa Wakui, and Kohji Yanagawa.
\newblock Arithmetic on $q$-deformed rational numbers.
\newblock {\em Arnold Mathematical Journal}, 11(3):42--92, 2025.

\bibitem{kogiso2019bridge}
Takeyoshi Kogiso and Michihisa Wakui.
\newblock A bridge between {Conway--Coxeter} friezes and rational tangles
  through the {Kauffman} bracket polynomials.
\newblock {\em Journal of Knot Theory and Its Ramifications}, 28(14):article
  no.\ 1950083, 40 pp., 2019.

\bibitem{labbe2022q}
S{\'e}bastien Labb{\'e} and M{\'e}lodie Lapointe.
\newblock The $q$-analog of the {Markoff} injectivity conjecture over the
  language of a balanced.
\newblock {\em Combinatorial Theory}, 2(1):article no.\ 9, 25 pp., 2022.

\bibitem{leclere2021q}
Ludivine Leclere and Sophie Morier-Genoud.
\newblock $q$-deformations in the modular group and of the real quadratic
  irrational numbers.
\newblock {\em Advances in Applied Mathematics}, 130:article no.\ 102223, 28
  pp., 2021.

\bibitem{leclere2024radius}
Ludivine Leclere, Sophie Morier-Genoud, Valentin~Yur'evich Ovsienko, and
  Aleksandr~Petrovich Veselov.
\newblock On radius of convergence of $q$-deformed real numbers.
\newblock {\em Moscow Mathematical Journal}, 24(1):1--19, 2024.

\bibitem{lee2019cluster}
Kyungyong Lee and Ralf Schiffler.
\newblock Cluster algebras and {Jones} polynomials.
\newblock {\em Selecta Mathematica}, 25(4):article article no.\ 58, 41 pp.,
  2019.

\bibitem{mcconville2021rank}
Thomas McConville, Bruce~E Sagan, and Clifford Smyth.
\newblock On a rank-unimodality conjecture of {Morier-Genoud and Ovsienko}.
\newblock {\em Discrete Mathematics}, 344(8):article no.\ 112483, 13 pp., 2021.

\bibitem{morier2019farey}
Sophie Morier-Genoud and Valentin Ovsienko.
\newblock {Farey} boat: continued fractions and triangulations, modular group
  and polygon dissections.
\newblock {\em Jahresbericht der Deutschen Mathematiker-Vereinigung},
  121:91--136, 2019.

\bibitem{morier2020q}
Sophie Morier-Genoud and Valentin Ovsienko.
\newblock $q$-deformed rationals and $q$-continued fractions.
\newblock In {\em Forum of Mathematics, Sigma}, volume~8, page e13, 2020.

\bibitem{morier2021quantum}
Sophie Morier-Genoud and Valentin Ovsienko.
\newblock Quantum numbers and $q$-deformed {Conway--Coxeter} friezes.
\newblock {\em The Mathematical Intelligencer}, 43(2):61--70, 2021.

\bibitem{morier2015sl}
Sophie Morier-Genoud, Valentin Ovsienko, and Serge Tabachnikov.
\newblock Sl$_2(\mathbb{Z})$-tilings of the torus, {Coxeter--Conway} friezes
  and {Farey} triangulations.
\newblock {\em L'Enseignement Math{\'e}matique}, 61(1):71--92, 2015.

\bibitem{nagai2020cluster}
Wataru Nagai and Yuji Terashima.
\newblock Cluster variables, ancestral triangles and {Alexander} polynomials.
\newblock {\em Advances in Mathematics}, 363:article no.\ 106965, 37 pp., 2020.

\bibitem{ouguz2025oriented}
Ezgi~Kantarc{\i} O{\u{g}}uz.
\newblock Oriented posets, rank matrices and $q$-deformed {Markov} numbers.
\newblock {\em Discrete Mathematics}, 348(2):article no.\ 114256, 17 pp., 2025.

\bibitem{ouguz2023rank}
Ezgi~Kantarc{\i} O{\u{g}}uz and Mohan Ravichandran.
\newblock Rank polynomials of fence posets are unimodal.
\newblock {\em Discrete Mathematics}, 346(2):article no.\ 113218, 20 pp., 2023.

\bibitem{ovsienko2021towards}
Valentin Ovsienko.
\newblock Towards quantized complex numbers: $q$-deformed gaussian integers and
  the picard group.
\newblock {\em Open Communications in Nonlinear Mathematical Physics},
  1:73--93, 2021.

\bibitem{rabideau2018f}
Michelle Rabideau.
\newblock F-polynomial formula from continued fractions.
\newblock {\em Journal of Algebra}, 509:467--475, 2018.

\bibitem{ren2022radiuses}
Xin Ren.
\newblock On radiuses of convergence of $q$-metallic numbers and related
  $q$-rational numbers.
\newblock {\em Research in Number Theory}, 8(3):article no.\ 37, 14 pp., 2022.

\bibitem{ren2023corrigendum}
Xin Ren.
\newblock Corrigendum to: On radiuses of convergence of $q$-metallic numbers
  and related $q$-rational numbers.
\newblock {\em Research in Number Theory}, 9(2):article no.\ 39, 4 pp., 2023.

\bibitem{series1985modular}
Caroline Series.
\newblock The modular surface and continued fractions.
\newblock {\em Journal of the London Mathematical Society}, 2(1):69--80, 1985.

\end{thebibliography}

\end{document}